\newtheorem{theorem}{Theorem}
\newtheorem{lemma}{Lemma}
\newtheorem{proposition}{Proposition}
\newtheorem{definition}{Definition}
\newtheorem{cor}{Corollary}
\newtheorem*{rmk*}{Remark}
\DeclareMathOperator\supp{supp}
\ifodd\value{page}
  {\small BENJAMIN BEDERT}
\title{On unique sums in Abelian groups}
\author{Benjamin Bedert\\
\tiny Mathematical Institute, University of Oxford}
\thanks{benjamin.bedert@magd.ox.ac.uk\\The author gratefully acknowledges financial support from the EPSRC}
\begin{document}
\maketitle
\begin{abstract}
Let $A$ be a subset of the cyclic group $\mathbf{Z}/p\mathbf{Z}$ with $p$ prime. It is a well-studied problem to determine how small $|A|$ can be if there is no unique sum in $A+A$, meaning that for every two elements $a_1,a_2\in A$, there exist $a_1',a_2'\in A$ such that $a_1+a_2=a_1'+a_2'$ and $\{a_1,a_2\}\neq \{a_1',a_2'\}$. Let $m(p)$ be the size of a smallest subset of $\mathbf{Z}/p\mathbf{Z}$ with no unique sum. The previous best known bounds are $\log p \ll m(p)\ll \sqrt{p}$. In this paper we improve both the upper and lower bounds to $\omega(p)\log p \leqslant m(p)\ll (\log p)^2$ for some function $\omega(p)$ which tends to infinity as $p\to \infty$. In particular, this shows that for any $B\subset \mathbf{Z}/p\mathbf{Z}$ of size $|B|<\omega(p)\log p$, its sumset $B+B$ contains a unique sum. We also obtain corresponding bounds on the size of the smallest subset of a general Abelian group having no unique sum. 
\end{abstract}

\tableofcontents
\section{Introduction}
Let $A,B$ be subsets of an finite Abelian group $G$. Their sumset $A+B$ is defined as $A+B = \{a+b:a\in A, b\in B\}$. We say that $A$ has a unique sum if there exist $a_1,a_2$ in $A$ so that the only solutions to $x+y=a_1+a_2$ with $x,y\in A$ are the trivial ones $(x,y)=(a_1,a_2),(a_2,a_1)$. In this case, we say that $a_1+a_2$ is a unique sum in $A+A$. In this paper, we will study the conditions under which a set $A$ must contain a unique sum. In particular, given any finite Abelian group $G$, we want to determine the size of the smallest subset of $G$ having no unique sum.
\begin{definition}
\normalfont
Let $G$ be a finite Abelian group. Then we define $m(G)$ to be the size of the smallest subset of $G$ which has no unique sum. Equivalently, $m(G)$ is the smallest integer so that any subset $B\subset G$ with size $|B|<m(G)$ has a unique sum.
Of special importance is the case where $G=\mathbf{Z}/p\mathbf{Z}$ is the cyclic group of prime order $p$ so we abbreviate the notation and write $m(p)$ for $m\left(\mathbf{Z}/p\mathbf{Z}\right)$.
\end{definition}
The question of estimating $m(p)$ was explicitly asked by S. Kopparty (open problems session, Harvard 2017) and it also appears as Problem 27 on B. Green's list of 100 open problems \cite{green}. Questions of this type go back at least to a paper of Straus \cite{straus} in which he proved the first bounds on the size $f(p)$ of the smallest subset $A\subset \mathbf{Z}/p\mathbf{Z}$ having no unique difference. Here, we say that $A$ contains a unique difference if there exist $a_1,a_2\in A$ such that the only solution to $x-y=a_1-a_2$ with $x,y\in A$ is the trivial one $(x,y)=(a_1,a_2)$. Straus proved that $f(p) \geqslant 1+\log_4(p-1)$ and this was later improved by Browkin, Divi\v{s} and Schinzel \cite{browkin} who obtained the following.
\begin{theorem}[Browkin-Divi\v{s}-Schinzel, \cite{browkin}]
Let $p$ be prime and $A,B\subset\mathbf{Z}/p\mathbf{Z}$.
\begin{itemize}
    \item[(i)] If $p>\min\left(2^{|A|+|B|-2},|A|^{|B|-1},|B|^{|A|-1}\right)$, then $A+B$ contains a unique sum.
    \item[(ii)] If $p>2^{|A|-1}$, then A has a unique difference and a unique sum.
\end{itemize}
\end{theorem}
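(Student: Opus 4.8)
The plan is to reduce both parts to a statement about \emph{integer} sumsets, since the prime $p$ can only enter the picture through ``wrap-around''. Lift $A$ and $B$ to subsets of $\{0,1,\dots,p-1\}\subset\mathbf Z$ and translate each so that $\min A=\min B=0$. Over $\mathbf Z$ the least sum $0=0+0$ and the greatest sum $M:=\max A+\max B$ each have a single representation, so if $A+B$ has no unique sum modulo $p$ then each of them must acquire a second representation after reduction: since a sum of two elements of $\{0,\dots,p-1\}$ that is congruent to $0$ must equal $0$ or $p$, and one congruent to $M$ must equal $M$ or $M-p$, there exist $a\in A,\ b\in B$ with $a+b=p$ and $a',b'$ with $a'+b'=M-p$; in particular $\max A+\max B\ge p$. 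Iterating through the successive extreme sums produces, for each element still in play, a ``covering'' constraint of the shape $a-b_i\equiv a''\pmod p$ with $a,a''\in A$, $b_i\in B$ (and the mirror statement exchanging $A$ and $B$). These are the only inputs that involve $p$; everything afterwards is a finite combinatorial consequence.

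From here I would argue by induction, peeling one extreme element off $A$ and/or $B$ at a time while tracking the forced coincidences. Stripping alternately from the two sets, the invariant $\max A+\max B\ge p$ must survive each of the $|A|+|B|-2$ steps needed to reach one-element sets, and each step can cost at most a factor $2$, giving $p\le 2^{|A|+|B|-2}$. If instead $A$ is held fixed and only $B$ is peeled, then at each of the $|B|-1$ steps the element removed from $B$ is pinned down, modulo $p$, by the choice of its ``partner'' coordinate in $A$ — one of at most $|A|$ possibilities — and multiplying through gives $p\le|A|^{|B|-1}$; by symmetry $p\le|B|^{|A|-1}$. Taking the minimum proves (i). For (ii) one uses that ``$A$ has a unique difference'' is the same as ``$A+(-A)$ has a unique sum'' in the sumset sense, and ``$A$ has a unique sum'' is the parallel diagonal statement: now the two sets being peeled are linked, so a single stripping reduces both at once, the iteration terminates after $|A|-1$ rounds instead of $2|A|-2$, and one gets $p\le 2^{|A|-1}$.

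The main obstacle is that deleting an extreme element can turn a previously non-unique sum into a unique one, so the recursion is not automatic: at each stage one must carry the list of sums that were being ``protected'' by the element about to be removed and verify that the surviving configuration still supplies enough forced coincidences to continue — equivalently, one must choose carefully which of $A$ or $B$ to strip, and from which end, so as to preserve the inductive hypothesis, and keep the two cases of coincidences (those that wrap around, the $+p$ ones, and those that do not) separate throughout. Making this bookkeeping close with \emph{exactly} the exponents $|A|+|B|-2$, $|A|^{|B|-1}$, $|B|^{|A|-1}$ and $|A|-1$ rather than something lossier is the delicate heart of the argument.
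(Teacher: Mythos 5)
The paper does not prove this theorem; it is quoted from Browkin--Divi\v{s}--Schinzel \cite{browkin} and used as background. The closest the paper comes to an argument of this kind is the remark in Section~2 that if $A$ has no unique sum then $A$ cannot be rectifiable (a finite integer set always has the unique sum $\max A'+\max A'$), so Lemma~\ref{rectificationG} forces $|A|>\log_2 p(G)$. Your opening move --- lift to $\{0,\dots,p-1\}$, translate so that $\min A=\min B=0$, and observe that the extremal sums $0$ and $M=\max A+\max B$ each have a single representation over $\mathbf Z$, hence must wrap around if there is no unique sum modulo $p$ --- is the same observation in hands-on form, and that much is sound.

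Everything after that, however, is a plan rather than a proof, and it has a genuine gap that you flag yourself but do not close. The induction is not well-founded as stated: the hypothesis ``$A+B$ has no unique sum'' is not preserved when an extreme element is deleted, since removing an element can \emph{create} unique sums, so one cannot simply recurse. To make the argument work you would need to replace this by a deletion-stable invariant and prove that each peeling step degrades it by at most a factor of $2$; that claim is the entire content of the bound $p\leqslant 2^{|A|+|B|-2}$, and in your write-up it is asserted (``each step can cost at most a factor $2$''), not argued. Likewise, the assertion that ``the element removed from $B$ is pinned down, modulo $p$, by the choice of its partner coordinate in $A$ --- one of at most $|A|$ possibilities'' is precisely the lemma behind $p\leqslant |A|^{|B|-1}$, and it is left as a heuristic with no mechanism; why the partner is unique and why this costs exactly $|A|$ rather than something lossier is not addressed. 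In part~(ii) your reduction of ``unique difference'' to ``unique sum in $A+(-A)$'' is correct, but the claim that linking the two peeling processes halves the number of rounds inherits the same missing bookkeeping. So the setup is right and the direction is plausible, but the decisive inductive lemmas --- the ones that produce the exact exponents $|A|+|B|-2$, $|B|-1$, and $|A|-1$ --- are absent.
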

Their result was extended to general Abelian groups by Lev.
\begin{theorem}[Lev, \cite{levrect}]
Let $A, B$ be subsets of a finite abelian group $G$ and let $p(G)$ be the smallest prime
divisor of $|G|$.
\begin{itemize}
    \item[(i)] If $p(G) > 2^{|A|+|B|-3}$, then $A+B$ contains a unique sum.
    \item[(ii)] If $p(G) > 2^{|A|-1}$, then $A$ has a unique difference and a unique sum.
\end{itemize}
\label{levbound}
\end{theorem}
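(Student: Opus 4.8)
The plan is to prove the contrapositive in both parts: for (i), that \emph{if} $A+B$ has no unique sum \emph{then} $p(G)\leqslant 2^{|A|+|B|-3}$, and analogously for (ii). I would run an induction on $|A|+|B|$. The base cases are trivial: if $|A|=1$ or $|B|=1$ then every element of $A+B$ is a unique sum. For the inductive step it is convenient to record two normalisations preserving the property ``$A+B$ has no unique sum'': translating $A,B$ to $A-t,B+t$, and dilating by a unit of $\mathbf{Z}/|G|\mathbf{Z}$ (an automorphism of $G$). The core reduction is: if for some $a_0\in A$ the set $(A\setminus\{a_0\})+B$ still has no unique sum, then the inductive hypothesis gives $p(G)\leqslant 2^{(|A|-1)+|B|-3}$, which is stronger than needed; symmetrically for deleting an element of $B$. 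So we may assume the \emph{rigid case}: removing any single element of $A$, or of $B$, produces a unique sum.

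Now suppose we are in the rigid case, with $|A|,|B|\geqslant 2$. If deleting $a_0\in A$ creates a unique sum $s$, then $s$ has a single representation avoiding $a_0$ and (since $A+B$ itself had no unique sum) a representation using $a_0$; as two distinct representations cannot agree in either coordinate, $s$ has exactly two representations, $a_0+\beta=a_0'+\beta'$ with $a_0'\neq a_0$ and $\beta\neq\beta'$. Thus every element of $A$, and symmetrically of $B$, appears in a representation of some $s\in A+B$ having exactly two representations, and for each such $s$ the quantity $a_0-a_0'=\beta'-\beta$ is a nonzero common element of $A-A$ and $B-B$. I would then split on the stabiliser $H=\mathrm{Stab}(A+B)$. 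If $H$ is nontrivial, then $A+B$ is a union of $H$-cosets, so $|A+B|\geqslant|H|\geqslant p(G)$; on the other hand, since no fibre of $(a,b)\mapsto a+b$ is a singleton, $|A+B|\leqslant|A||B|/2\leqslant 2^{|A|+|B|-3}$ (using $|A|,|B|\geqslant 2$), and we are done. If $H$ is trivial, then by Kneser's theorem $|A+B|\geqslant|A|+|B|-1$, placing us near the Cauchy--Davenport extremal regime; here I would adapt the dyadic argument of Browkin--Divi\v{s}--Schinzel: realise $G$ in an ordered model (e.g.\ a cyclic quotient lifted to an integer interval, or $G$ as $\mathbf{Z}^{r}/\Lambda$), use that a finite set of integers has a unique maximal sum, and remove the wrap-around obstruction by repeatedly halving the ambient interval at the cost of one element of $A$ or $B$ per halving --- this is what produces the exponent $|A|+|B|-3$. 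Carrying this dyadic scheme out in a \emph{general} abelian group, rather than in $\mathbf{Z}/p\mathbf{Z}$, and with the optimal constant, is the step I expect to be the main obstacle.

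For part (ii) the point is that a unique difference (resp.\ unique sum) of $A$ is precisely a unique representation in $A+(-A)$ (resp.\ in $A+A$). Applying part (i) directly would only give the exponent $2^{2|A|-3}$; the sharpening to $2^{|A|-1}$ comes from the extra symmetry of the pair $(A,\pm A)$. Indeed, in the induction above, deleting $a_0$ from $A$ simultaneously deletes $\mp a_0$ from the second copy, so one inductive step removes \emph{two} elements from ``$A\sqcup B$'' while only one doubling is incurred; unwinding the recursion $p(G)\leqslant 2\cdot(\text{bound for }|A|-1)$ from the base values $|A|\leqslant 2$ (where one checks directly that $A$ has both a unique difference and a unique sum, using $p(G)>2$ so that $G$ has no $2$-torsion) yields $2^{|A|-1}$. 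The rigid case is handled exactly as in part (i), and the same bisection step is the principal difficulty.
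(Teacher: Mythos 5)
The paper does not give a self-contained proof of this cited theorem of Lev; instead, just after stating Lemmas~\ref{rectificationp} and~\ref{rectificationG}, it sketches the deduction and remarks that ``one can deduce Theorem~\ref{levbound} from Lemma~\ref{rectificationG} in this way.'' The route the paper has in mind is \emph{rectification}: translate so that $A$ and $B$ share a common element, apply Lemma~\ref{rectificationG} to the set $A\cup B$ (respectively, to $A$ for part (ii)) to obtain a Freiman isomorphism $\phi$ onto a finite set of integers, and then observe that $\max\phi(A)+\max\phi(B)$ has a unique representation in $\phi(A)+\phi(B)$ (and $\max\phi(A)-\min\phi(A)$, $\max\phi(A)+\max\phi(A)$ are a unique difference and sum), which pulls back along $\phi^{-1}$. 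That is the entire proof. Your proposal takes a genuinely different route --- a direct combinatorial induction on $|A|+|B|$, splitting into the ``rigid'' and non-rigid cases, then using the stabiliser of $A+B$, Kneser's theorem, and a dyadic halving argument in the trivial-stabiliser branch.

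The decisive issue is that your proposal has a real gap exactly where you flag one. In the rigid case with trivial stabiliser, you say you would ``adapt the dyadic argument of Browkin--Divi\v{s}--Schinzel \dots remove the wrap-around obstruction by repeatedly halving the ambient interval at the cost of one element of $A$ or $B$ per halving'' and then immediately concede that carrying this out in a general abelian group with the optimal constant ``is the step I expect to be the main obstacle.'' That step is precisely the content of the theorem; the surrounding induction, the observation that $|A+B|\leqslant|A||B|/2$ when every fibre has size $\geqslant2$, and the use of Kneser are all fine bookkeeping, but without the halving argument there is no proof. The deduction of (ii) with exponent $2^{|A|-1}$ rather than $2^{2|A|-3}$ is also only gestured at (``one inductive step removes two elements \dots while only one doubling is incurred''); it is not clear where the single factor of $2$ per step comes from. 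By contrast, the rectification route the paper indicates dispenses with all of this case analysis: once $A\cup B$ is rectified, the extremal-element observation finishes both parts in one line, and the hard work is encapsulated once and for all inside Lemma~\ref{rectificationG}, which is a quoted black box. If you want to pursue your direct approach, the place to concentrate effort is on giving a genuine substitute for the dyadic halving in an arbitrary finite abelian group --- historically this is exactly the difficulty that Lev's rectification theorem was designed to circumvent.
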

The current best bound for unique sums in $A+B$ is due to Leung and Schmidt \cite{schmidt}, who recently proved under the same assumptions as in Theorem \ref{levbound} that $A+B$ contains a unique sum if $p(G) > (\sqrt[4]{12})^{|A|+|B|-2}$. Closely related problems, such as estimating the size of the smallest $A\subset \mathbf{Z}/p\mathbf{Z}$ so that any sum in $A+A$ has at least $K$ distinct representations, or alternatively such that $\underbrace{A+\dots+A}_{k}$ has no unique sum, have also been studied, for a selection see \cite{  DucX, hill, lucz, Kreps}.
\bigskip

These bounds show that the size $f(G)$ of the smallest subset of $G$ with no unique difference satisfies $f(G)\gg\log p(G)$, and examples of sets with no unique difference and size $O(\log p(G))$ do exist and already appear in Straus's original paper \cite{straus}. Hence, we have $f(G)=\Theta(\log p(G))$. For the problem of determining the size $m(G)$ of the smallest $A\subset G$ having no unique sum, the results above provide a lower bound of the shape $m(G)\geqslant C\log p(G)$ for some absolute constant $C>0$, which is the current record lower bound. Unlike the situation for sets with no unique difference, there are no constructions known of sets with no unique sum and size $O(\log p(G))$.
The following two theorems are our main results proving that such examples cannot exist and they are the first lower bounds on $m(G)$ replacing the constant $C$ in the bound above by a function tending to infinity with $p$.
\begin{theorem}
    There is a function $\omega(n)$ which tends to infinity as $n\to\infty$ such that the following holds. Let $p$ be a prime, then $m(p)\geqslant \omega(p)\log p$. In fact, one can take
    $$\omega(n)\gg\frac{\sqrt{\log\log\log n}}{\log\log\log\log n}.$$ In particular, if $B\subset \mathbf{Z}/p\mathbf{Z}$ has size $|B|<\omega(p)\log p$, then $B$ has a unique sum.
\label{lowerboundthm}
\end{theorem}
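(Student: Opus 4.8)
The plan is to argue by contradiction: suppose $A\subseteq\mathbf{Z}/p\mathbf{Z}$ has no unique sum and $|A|=k$, and deduce that $k$ cannot be much smaller than $\omega(p)\log p$. The natural first move is to reformulate the hypothesis via the representation function $r(x)=\#\{(a,b)\in A\times A:a+b=x\}$: a short case check (treating $a=b$ and $a\neq b$ separately) shows that ``$A$ has no unique sum'' is equivalent to $r(x)\geqslant 3$ for every $x\in A+A$. Summing over $A+A$ gives the crude consequences $|A+A|\leqslant k^2/3$ and, for the higher energies, $\sum_x r(x)^j\geqslant 3^{j-1}k^2$ (in particular $E(A)\geqslant 3k^2$); two further elementary unpackings will be useful, namely that every $a\in A$ is the midpoint of two distinct other elements of $A$, and that every sum $a+b$ admits a representation $b'+c'$ with $b',c'\in A\setminus\{a,b\}$. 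All of this places $A$ far outside the generic regime, but since Theorem \ref{levbound} already gives $k\geqslant 1+\log_2 p$ for free, the real task is to extract a new gain of a factor $\omega(p)$.

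The mechanism I would pursue is to treat Theorem \ref{levbound} as the base case of an iteration and to prove a quantitatively self-improving version of it. Using the energy and support information above --- after a dilation (pigeonholing over $tA$ with $t\in(\mathbf{Z}/p\mathbf{Z})^{\times}$) and a Freiman/Bohr-set type argument --- I would locate a large subset $A'\subseteq A$ trapped inside a low-rank coset progression (or Bohr set) $P$, to which the no-unique-sum property transfers in a suitable relative sense. Inside such a structured $P$ the problem is essentially lower-dimensional, and one can either push the Browkin--Divi\v{s}--Schinzel counting through to completion or reduce the rank of $P$ by one at the cost of shrinking the surviving subset and the usable modulus by bounded factors. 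Iterating down to rank $1$ --- an honest arithmetic progression, which visibly contains a unique sum (the doubled initial element) --- yields the contradiction. Equivalently, one chases the forced chain of midpoint and sum relations to assemble a combinatorial identity certifying $p\mid N$ for an explicit integer $N$, the object being to exploit the sheer abundance of such relations to drive $N$ well below the Browkin--Divi\v{s}--Schinzel value $2^{k-1}$, down to something like $2^{k/\omega(p)}$.

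The main obstacle, and where essentially all the work sits, is controlling the losses so that the iteration survives for about $\omega(p)$ rounds: each rank reduction (equivalently, each round of relation combination) costs a constant factor in the density of the surviving subset and a constant factor in the portion of $\log p$ still available, so na\"ively only $O(1)$ rounds are affordable. The number $R$ of sustainable rounds is ultimately capped by the fact that the iteration implicitly manufactures a near-dissociated subset of $\mathbf{Z}/p\mathbf{Z}$ (or a bounded-rank progression occupying a positive proportion of a large Bohr set) whose size is doubly exponential in $R$; since a dissociated set in $\mathbf{Z}/p\mathbf{Z}$ has at most $\log_2 p$ elements, this forces $2^{2^{R}}\ll\log p$, i.e. $R\ll\log\log\log p$. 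Balancing the three free parameters --- the target rank, the density one insists on retaining at each stage, and the energy/Fourier threshold used to extract structure --- against this cap is exactly what should yield the explicit rate $\omega(n)\gg\sqrt{\log\log\log n}/\log\log\log\log n$, and is the most delicate part of the argument. (For general abelian groups one would reduce to the prime cyclic case, but that is not needed for Theorem \ref{lowerboundthm} itself.)
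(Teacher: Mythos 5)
Your elementary reformulation ($r(x)\geqslant 3$ on $A+A$, and the balancedness of $A$) is correct, and your instinct that the triple logarithm should arise from a doubly-exponential blow-up capped by a $\log p$ rectification bound is in the right neighbourhood. But the mechanism you propose has two essential gaps. First, the bounds you extract are only a constant factor from trivial: $E(A)\geqslant 3|A|^2$ and $|A+A|\leqslant |A|^2/3$ are nowhere near the $E(A)\gg |A|^3$ or $|A+A|\ll|A|$ thresholds needed to invoke Balog--Szemer\'edi--Gowers, Freiman's theorem, or any Bohr-set density increment, so there is no route from the no-unique-sum hypothesis to trapping a positive fraction of $A$ inside a low-rank coset progression. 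Second, the no-unique-sum property does not pass to subsets $A'\subset A$ — subsets of $A$ can easily contain unique sums — so the phrase ``transfers in a suitable relative sense'' has no content as stated, and your rank-reduction iteration has no valid inductive step.

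The paper's actual route avoids coset progressions and Bohr sets entirely. It first proves a purely combinatorial inequality relating additive span to additive dimension, $|\Sigma(Z)|\leqslant\binom{|Z|}{\dim(Z)}\binom{|Z|+\dim(Z)}{\dim(Z)}$ (Proposition~\ref{addispanineqprop}). Then, using only that $A$ is \emph{balanced}, a weight-compression argument over a directed graph built from the midpoint relations $2b=b_1+b_2$ shows some translate $A+g$ is an additive basis for $\mathbf{Z}/p\mathbf{Z}$ (Proposition~\ref{additivebasisp}); combined with the first inequality, $A+g$ must contain a dissociated subset of size $\gg |A|/K$ with $K$ small. The entire improvement over $\log p$ then comes from Proposition~\ref{disssmallprop}: a density increment on a set $S$ of \emph{translates} of a maximal dissociated $D\subset A$, where the absence of unique sums forces a somewhat larger $S'$ to pick up an extra $\asymp|D|^2/|A|$ elements of $A$. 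The blow-up $|S_i|\leqslant 2^{3^i}$, halted when $|S|$ exceeds the rectifiability threshold $\log_2 p$, is where $\sqrt{\log\log\log p}$ comes from — it applies to the translate set $S$, not to a dissociated subset of $\mathbf{Z}/p\mathbf{Z}$ as in your sketch. None of these three ingredients appears in your proposal, and without a concrete replacement for all of them the argument cannot be completed.
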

Our goal is to obtain a lower bound with $\omega(n)\to \infty$ and we have not tried to optimise the exact shape of $\omega$, which can certainly be improved. To analyse $m(G)$ for general Abelian groups, we begin with the simple observation that if $p$ is a prime dividing the order of $G$, then $G$ contains a cyclic group of order $p$ as a subgroup. Thus for a general Abelian group $G$ we have
$$m(G)\leqslant \min_{p\text{ prime, }p| |G|} m(p).$$
Hence, there is no hope of proving a better lower bound on the size of a subset of $G$ having no unique sum than those holding in cyclic groups $\mathbf{Z}/p\mathbf{Z}$ with $p| |G|$. The following theorem shows that we can get a lower bound of the form of Theorem \ref{lowerboundthm} in general.
\begin{theorem}
    Let $G$ be a finite Abelian group and let $p(G)$ be the smallest prime factor of $|G|$. If $A\subset G$ has no unique sum, then
    $$|A|\geqslant \omega(p(G))\log p(G),$$
    where $\omega$ is the same function as in Theorem \ref{lowerboundthm}.
\label{generallowerboundthm}
\end{theorem}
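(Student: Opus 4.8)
The plan is to obtain Theorem \ref{generallowerboundthm} by inspecting the proof of Theorem \ref{lowerboundthm} and exploiting that that argument is essentially local. First I would make the harmless reduction to the case in which $A$ generates $G$: replacing $A$ by a translate containing $0$ and $G$ by the subgroup $H = \langle A - A\rangle$ affects neither the cardinality of $A$ nor whether $A$ has a unique sum, and $p(H) \geqslant p(G)$ because $|H|$ divides $|G|$; since $\omega(n)\log n$ may be taken nondecreasing it then suffices to prove the bound inside $H$, so we rename $H$ as $G$. Write $q = p(G)$ and $N = |A|$, and let $C$ be the (absolute) constant implicit in the proof of Theorem \ref{lowerboundthm}. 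If $q \leqslant N^{C}$ then $N \geqslant q^{1/C}$, which for $q$ above an absolute constant already exceeds $\omega(q)\log q$ (since $q^{1/C}$ is polynomial in $q$ while $\omega(q)\log q$ is at most polylogarithmic), and the bound is trivial for $q$ below that constant. Hence we may assume $q > N^{C}$; in particular $G$ has no nontrivial subgroup of order at most $N^{C}$.

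Now I would run through the proof of Theorem \ref{lowerboundthm} and replace each input that used the primality of $p$. I expect that argument to manipulate only $A$, its iterated sumsets and difference sets, and Fourier transforms, and to use $G = \mathbf{Z}/p\mathbf{Z}$ only via Cauchy--Davenport-type sumset inequalities and the absence of proper nontrivial subgroups of $\mathbf{Z}/p\mathbf{Z}$, both invoked only for sets built from $A$ of size $N^{O(1)}$. The Fourier-analytic steps carry over verbatim with $\widehat{\mathbf{Z}/p\mathbf{Z}}$ replaced by $\widehat{G}$. For the sumset inequalities, Kneser's theorem gives $|X+Y| \geqslant |X| + |Y| - |\mathrm{Stab}(X+Y)|$, and $\mathrm{Stab}(X+Y)$, being a subgroup of order at most $|X+Y| \leqslant N^{O(1)}$, is trivial, which recovers the estimate $|X+Y| \geqslant |X| + |Y| - 1$ used over $\mathbf{Z}/p\mathbf{Z}$. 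Wherever the argument used that $\mathbf{Z}/p\mathbf{Z}$ has no proper nontrivial subgroup, every subgroup that actually arises has order $\leqslant N^{O(1)}$ and is therefore trivial. If at some point a dilation $x \mapsto \lambda x$ is used, it is applied to a set of size $N^{O(1)} < q$, on which multiplication by $\lambda$ is injective, so it behaves exactly as in $\mathbf{Z}/p\mathbf{Z}$. Carrying the argument through reproduces the same chain of deductions, and hence a lower bound of the form $N \geqslant \omega(M)\log M$ for some quantity $M \geqslant q$ (the quantity playing the role of $p$ in the original proof, be it $|G|$ or the threshold below which $G$ has no nontrivial subgroup, both at least $q$); by monotonicity of $\omega(n)\log n$ this gives $N \geqslant \omega(q)\log q$, as desired.

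The main obstacle is the bookkeeping: one must check at every stage of the proof of Theorem \ref{lowerboundthm} that the sets, sumsets, difference sets and subgroups it produces really are polynomially bounded in $N$, so that substituting Kneser's theorem for Cauchy--Davenport and the triviality of small subgroups for the simplicity of $\mathbf{Z}/p\mathbf{Z}$ is always legitimate; this is automatic in the regime $q > N^{C}$ secured above, but it has to be verified line by line. The only way this plan could genuinely fail is if the proof of Theorem \ref{lowerboundthm} relies on the full multiplicative structure of $\mathbf{Z}/p\mathbf{Z}$ rather than on isolated dilations applied to small sets, in which case that part would require a separate argument.
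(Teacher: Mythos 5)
Your proposal is a plan rather than a proof, and the plan's anticipated ingredients do not match what the proof of Theorem~\ref{lowerboundthm} actually uses. The paper never gives a separate $\mathbf{Z}/p\mathbf{Z}$-specific proof of Theorem~\ref{lowerboundthm} that you could then "patch": it proves Theorem~\ref{generallowerboundthm} directly in a general Abelian group by combining three propositions (Proposition~\ref{addispanineqprop} on additive span vs.\ dissociation, Proposition~\ref{additivebasis} on irreducible balanced sets, and Proposition~\ref{disssmallprop}/\ref{iteratetran} via a density increment), and Theorem~\ref{lowerboundthm} falls out as a special case. None of the tools you conjecture the proof uses --- Fourier transforms, Cauchy--Davenport/Kneser-type sumset inequalities, dilations $x\mapsto\lambda x$ --- appear anywhere. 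The two places where the group structure genuinely enters are (i) rectification of sets of size $\leqslant\log_2 p(G)$, where the paper invokes Lev's generalization (Lemma~\ref{rectificationG}) of the Bilu--Lev--Ruzsa lemma, and (ii) Lagrange's theorem, giving that every nontrivial subgroup has order $\geqslant p(G)$. Your broad heuristic --- that the argument only needs small subgroups to be trivial and small sets to behave as if in $\mathbf{Z}$ --- is in the right spirit, but since the specific substitutions you propose are not the ones required, the "bookkeeping" you defer is not merely routine verification: it would amount to discovering a different proof.

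There is also a concrete obstruction your plan does not anticipate. After your reduction to $G=\langle A-A\rangle$, the argument needs a translate of $A$ (or of a balanced set inside it) to be an additive basis for the subgroup it generates. In a general Abelian group this is simply false for balanced sets: the example $B=\mathbf{Z}/3\mathbf{Z}\times\{0,1\}\subset\mathbf{Z}/3\mathbf{Z}\times\mathbf{Z}/p\mathbf{Z}$ in the paper is balanced, yet no translate is an additive basis of the (large) subgroup it generates. The paper circumvents this by passing to an \emph{irreducible} balanced subset and proving Proposition~\ref{additivebasis} for those; nothing in your proposal addresses this, and it cannot be absorbed into "all subgroups that arise have order $\leqslant N^{O(1)}$", since the relevant subgroup here is $\langle B+g\rangle$ itself, which is large. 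So as written the proposal has a real gap: it assumes a shape of the $\mathbf{Z}/p\mathbf{Z}$ proof that is not correct, and it misses the irreducibility device needed to make even the right ingredients work over a general $G$.
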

We also improve on the best known upper bound on $m(p)$ by constructing for each prime $p$ a set $A$ which has no unique sum and size $O((\log p)^2)$. This improves the previous best known bound $m(p)\ll \sqrt{p}$ which came from a rather easy construction of a set $A\subset \mathbf{Z}/p\mathbf{Z}$ whose sumset $A+A$ is the whole of $\mathbf{Z}/p\mathbf{Z}$.
\begin{theorem}
    Let $p$ be a prime, then $m(p)\ll (\log p)^2$. That is, for every prime $p$ there is a set $A$ of size $O((\log p)^2)$ having no unique sum.
\label{upperboundthm}
\end{theorem}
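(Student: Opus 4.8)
The plan is to construct, for every prime $p$, an explicit set $A\subseteq\mathbf{Z}/p\mathbf{Z}$ of size $O((\log p)^2)$ in which \emph{every} element of $A+A$ has at least three representations as an ordered sum $x+y$ with $x,y\in A$. This is exactly what is needed: if some sum $s=a_1+a_2$ had only one or two representations, then, using that $p$ is odd (so $s$ cannot be two distinct "doubles" $2c=2d$), the representations would be forced to be $(a_1,a_1)$ alone, or $(a_1,a_2)$ and $(a_2,a_1)$ with $a_1\neq a_2$, i.e.\ $s$ would be a unique sum; conversely three ordered representations always contain one whose underlying multiset differs from $\{a_1,a_2\}$. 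A first structural observation guides the search: any $A$ with no unique sum must be genuinely spread out, since if $A$ lies in an interval of length $<p/2$ then, after translating so that $A\subseteq\{0,1,\dots,N\}$ with $2N<p$, the element $2\max A$ is a unique sum. In particular $A$ cannot be a short arithmetic progression, nor a small proper generalised arithmetic progression (the extremal corner of a "doubly proper" GAP is a unique sum), nor anything resembling a geometric progression (too close to a Sidon set). The construction must instead be a structured set whose additive redundancy survives a wrap-around modulo $p$.

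Concretely, I would take $A$ to consist of all residues expressible as a signed sum of at most two powers of $2$ drawn from $\{2^0,2^1,\dots,2^k\}$, that is $A=\{0\}\cup\{\pm 2^i\}\cup\{\pm(2^i\pm 2^j)\}$, with $k$ chosen comparable to $\log_2 p$; then $|A|=O(k^2)=O((\log p)^2)$. The point of allowing two powers and both signs is that the carry identities $2^i+2^i=2^{i+1}$ and $2^i-2^i=0$, together with the option of \emph{splitting} a power as $2^i=2^{i-1}+2^{i-1}$ or $2^i=2^{i+1}-2^i$, produce many inequivalent ways to regroup a sum. A sum $a_1+a_2$ of two elements of $A$ is, as an integer, a signed combination of at most four powers $2^{i_1},\dots,2^{i_4}$; for a "generic" such combination one regroups these four terms into two pairs in the three essentially different ways, each pair lying in $A$, and after normalising by carries one checks these yield at least three distinct representations. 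Degenerate digit patterns (several equal powers) are dealt with by first splitting one of the powers.

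The genuine obstacle is the top of the construction. When one adds the two largest elements of $A$ the carries propagate past $2^{k+1}$, and the resulting value is $\equiv R\pmod p$ where $R$ is obtained by subtracting a multiple of $p$; for a generic prime $p$ this $R$ is an essentially arbitrary residue with no sparse binary expansion, so a priori it lies in $A+A$ with far too few representations. Resolving this is the heart of the argument, and I expect it to require either choosing the scale $k$ (and possibly a small cyclic dilate/shift of the whole construction) so that the overflow folds onto a portion of $A+A$ that is independently thick, or adjoining to $A$ a further $O((\log p)^2)$ auxiliary elements clustered near the top that absorb every possible reduced value $R$ — and, crucially, carrying this out uniformly over all primes $p$, perhaps by basing the "place values" not on powers of $2$ but on a radix or near-geometric sequence adapted to $p$ itself. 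Once the wrap-around is controlled, the remaining verification that $r_A(t)\geq 3$ for all $t\in A+A$ is a finite, if laborious, case analysis of digit patterns.
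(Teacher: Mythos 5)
Your proposal identifies the right scale and the right gridlike shape, but there is a genuine gap at exactly the point you flag yourself: you never resolve the wrap-around past $p$, and your suggested fixes (dilating the scale, adjoining an absorbing top cluster, switching to a $p$-adapted radix) are hopes rather than arguments. The problem is structural, not merely technical. The set $\{\pm 2^i: 0\le i\le k\}$ fails to be \emph{balanced} at its extremes: the paper calls $B$ balanced if every $b\in B$ satisfies $2b=b_1+b_2$ for distinct $b_1,b_2\in B$, and the doubles of your top-scale elements have no alternative representation inside the window, which is precisely why the carries escape. Patching this with ``extra elements near the top'' just relocates the same issue to the new top, so without a genuinely new idea the case analysis you anticipate will not close.

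The paper sidesteps the wrap-around entirely with a two-step argument. First, it invokes a construction of Nedev producing a balanced set $B\subset\mathbf{Z}/p\mathbf{Z}$ with $|B|=(1+o(1))\log_2 p$; this is the missing ingredient, a set closed under the midpoint relation with no boundary. Second, it forms $A=B\times B\subset(\mathbf{Z}/p\mathbf{Z})^2$ and checks directly that $A$ has no unique sum: an off-diagonal sum $(b,b')+(c,c')$ with $b\ne c$, $b'\ne c'$ has the alternative representation $(b,c')+(c,b')$, while a sum with $b=c$ (or $b'=c'$) uses the balanced relation $2b=b_1+b_2$ in that coordinate. Finally, since having no unique sum is a Freiman-isomorphism invariant, one transfers $A$ into $\mathbf{Z}/p\mathbf{Z}$ via $\phi_r:(b,b')\mapsto b+rb'$; a counting argument over $r$ works because $|B|^8<p$. (A remark in the paper notes one can even take $A=B+B\subset\mathbf{Z}/p\mathbf{Z}$ directly.) The lesson is that the laborious digit-by-digit carry analysis you were gearing up for is unnecessary once you (i) replace the ad hoc set of sparse binary numbers by a balanced set, which exists at size $\approx\log_2 p$ precisely because one is working modulo $p$ rather than in $\mathbf{Z}$, and (ii) build the no-unique-sum set in a product group and only afterwards rectify it into $\mathbf{Z}/p\mathbf{Z}$.
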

It is clear that this implies the corresponding bound $m(G)\ll (\log p(G))^2$ for general Abelian groups $G$.

\bigskip

\textbf{Acknowledgements.}
The author would like to thank his supervisor Ben Green for introducing him to the problem, and Swastik Kopparty for sharing helpful references. The author also gratefully acknowledges financial support from the EPSRC.

\section{Prerequisites}
In this paper, $G$ denotes an Abelian group and we shall always write $+$ for the group operation. We write $p(G)$ for the smallest prime factor of $|G|$. To improve readability, we omit floor and ceiling functions throughout the paper, but it will be clear from context which quantities should be integer-valued. For an element $g\in G$, $r_A(g)$ denotes the number of ordered pairs in $A^2$ whose sum equals $g$, so
\begin{equation*}
    r_A(g) \vcentcolon=\left|\{(a,a')\in A^2: a+a'=g\}\right|.
\end{equation*}
So a set $A\subset G$ has a unique sum if and only if there is some $g$ such that $1\leqslant r_A(g) \leqslant 2$. We introduce the notion of Freiman-isomorphic sets, which will play a crucial role in our argument.
\begin{definition}
    Let $G,G'$ be Abelian groups and let $A\subset G$, $A'\subset G'$. We say that a map $\phi:A\to A'$ is a \textit{Freiman homomorphism} if whenever $a_1,a_2,a_3,a_4\in A$ satisfy $$a_1+a_2=a_3+a_4,$$
    then $$\phi(a_1)+\phi(a_2)=\phi(a_3)+\phi(a_4).$$
    We say that $A$ and $A'$ are \textit{Freiman-isomorphic} if there is a bijective Freiman homomorphism $\phi:A\to A'$ so that $\phi^{-1}$ is also a Freiman homomorphism.
\label{frei}
\end{definition}
We continue with two useful lemmas which are part of a large number of results in the literature that are often referred to as `rectification' results. Such results show that under certain assumptions on a subset $A$ of an Abelian group $G$, $A$ is Freiman-isomorphic to a set of integers. If this is the case, we say that $A$ is rectifiable. The rectification principle that we need states that small subsets of Abelian groups are rectifiable, as made precise in the following two lemmas.
\begin{lemma}[Bilu-Lev-Ruzsa, \cite{bilu} Theorem 3.1]
Let $p$ be prime and let $Z\subset \mathbf{Z}/p\mathbf{Z}$ have size $|Z|\leqslant \log_2 p$. Then $Z$ is Freiman-isomorphic to a set of integers.
\label{rectificationp}
\end{lemma}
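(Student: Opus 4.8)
The plan is to rectify $Z$ by a suitable dilation. Translations are Freiman isomorphisms, so we may assume $0\in Z$; and since $x\mapsto\lambda x$ is a Freiman isomorphism of $\mathbf{Z}/p\mathbf{Z}$ with itself for every $\lambda\in(\mathbf{Z}/p\mathbf{Z})^{\times}$, it is enough to exhibit one nonzero $\lambda$ for which lifting $\lambda\cdot Z$ to the integers gives a Freiman isomorphism. The key observation I would record is the following sufficient condition: writing $\widetilde{\lambda z}$ for the representative of $\lambda z$ in $(-p/2,p/2)$, suppose $\lambda$ is chosen so that all the $\widetilde{\lambda z}$, $z\in Z$, lie in a common interval of length less than $p/2$; then $z\mapsto\widetilde{\lambda z}$ is a Freiman isomorphism of $Z$ onto a set of integers. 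Its inverse is automatically a Freiman homomorphism, since a relation $\widetilde{\lambda z_1}+\widetilde{\lambda z_2}=\widetilde{\lambda z_3}+\widetilde{\lambda z_4}$ in $\mathbf{Z}$ reduces modulo $p$ and, dividing by $\lambda$, to $z_1+z_2=z_3+z_4$; conversely, if $z_1+z_2=z_3+z_4$ in $\mathbf{Z}/p\mathbf{Z}$ then $\widetilde{\lambda z_1}+\widetilde{\lambda z_2}-\widetilde{\lambda z_3}-\widetilde{\lambda z_4}$ is a multiple of $p$, and it is a difference of two sums each lying in a common interval of length less than $p$, so it has absolute value less than $p$ and hence is $0$.

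To produce such a $\lambda$ I would use simultaneous Diophantine approximation. Let $\|t\|$ denote the distance from $t$ to the nearest integer. Applying Dirichlet's theorem to the numbers $z/p$ for $z\in Z\setminus\{0\}$ gives an integer $1\leqslant q<4^{|Z|-1}$ with $\|qz/p\|\leqslant 1/4$ for every $z\in Z$; since $p$ is odd these inequalities are strict, so every $\widetilde{qz}$ lies in $(-p/4,p/4)$, an interval of length less than $p/2$. If in addition $4^{|Z|-1}<p$, then $q\not\equiv 0\pmod p$, and $\lambda=q$ works. This already proves the lemma with the bound $|Z|\leqslant\log_2 p$ weakened by a factor of two in the exponent, say for $|Z|\leqslant\tfrac12\log_2 p$.

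Closing this factor of two, so as to reach the stated range $|Z|\leqslant\log_2 p$, is where I expect the difficulty to lie. A typical dilate is useless — the largest gap in a random $|Z|$-element subset of $\mathbf{Z}/p\mathbf{Z}$ has size only about $p\log|Z|/|Z|$, far below $p/2$ — so admissible dilates are sparse, and the crude pigeonhole inside Dirichlet's theorem has to be replaced by a sharper count of the $\lambda$ for which $\lambda\cdot Z$ fits into a window of length $p/2$. Such a count has main term of order $p/2^{|Z|-1}$, which is positive exactly when $2^{|Z|-1}<p$; but the off-diagonal contributions are of comparable size when the elements of $Z$ are themselves additively related, so some further idea — a smoothed or cleverly positioned window, or a more careful inductive construction of the integer lift — is needed to push the estimate all the way. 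The remaining points, namely that the initial reductions (translating $Z$ to contain $0$, and composing back with the inverse dilation and translation) preserve Freiman-isomorphism type, are routine.
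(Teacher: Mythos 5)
The paper does not prove this lemma: it is cited as Theorem~3.1 of Bilu, Lev and Ruzsa~\cite{bilu}, so there is no internal proof to measure your argument against.

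On its own terms, your proposal carries out the standard reduction correctly — translate so that $0\in Z$, dilate by an invertible $\lambda$, lift the balanced representatives — and your sufficient criterion, that the lift is a Freiman isomorphism as soon as all the $\widetilde{\lambda z}$ lie in a common interval of length strictly less than $p/2$, is right and is justified correctly in both directions. The gap is the one you name yourself: Dirichlet's simultaneous approximation theorem produces a denominator $q<4^{|Z|-1}$, and one needs $4^{|Z|-1}<p$ to guarantee that $q$ is invertible mod~$p$, so the argument as written proves the lemma only for $|Z|\lesssim\tfrac{1}{2}\log_2 p$, half the claimed exponent. Since you explicitly do not supply any argument reaching $|Z|\leqslant\log_2 p$, the lemma as stated is not proved. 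Closing that factor of two is precisely the substance of the Bilu--Lev--Ruzsa theorem and requires a genuinely different count — one that allows the target half-circle to sit anywhere on $\mathbf{Z}/p\mathbf{Z}$ rather than be centred at the origin, and that copes with the correlations you rightly worry about when $Z$ is additively structured — not a refinement of Dirichlet's pigeonhole. Your closing remarks (a smoothed window, a more careful inductive lift) correctly locate the obstruction but remain speculation rather than a proof, so the proposal does not establish the statement at the stated strength.
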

We will use the following generalisation to arbitrary Abelian groups.
\begin{lemma}[Lev, \cite{levrect} Theorem 1]
Let $G$ be a finite Abelian group and let $p(G)$ denote the smallest prime dividing $|G|$. If $Z\subset G$ has size $|Z|\leqslant \log_2 p(G)$, then $Z$ is Freiman-isomorphic to a set of integers.
\label{rectificationG}
\end{lemma}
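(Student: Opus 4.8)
The plan is to bootstrap from the prime-order case (Lemma~\ref{rectificationp}). Write $k=|Z|$ and $p=p(G)$, so $2^{k}\leqslant p$; if $k\leqslant 2$ the set $Z$ is trivially Freiman-isomorphic to $\{0\}$ or $\{0,1\}$, so assume $k\geqslant 3$. The first step is to reduce to a cyclic group. Decompose $G\cong \mathbf{Z}/n_{1}\mathbf{Z}\times\cdots\times\mathbf{Z}/n_{r}\mathbf{Z}$ with $n_{1}\mid\cdots\mid n_{r}$; since every prime dividing $|G|$ divides the exponent $n_{r}$, we have $p(\mathbf{Z}/n_{r}\mathbf{Z})=p$. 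For $\alpha\in\mathbf{Z}^{r}$ consider the group homomorphism $\nu_{\alpha}\colon G\to\mathbf{Z}/n_{r}\mathbf{Z}$, $\nu_{\alpha}(x_{1},\dots,x_{r})=\sum_{i}\alpha_{i}\frac{n_{r}}{n_{i}}x_{i}$. Any group homomorphism is a Freiman homomorphism, and an injective one restricts to a Freiman isomorphism onto its image, so it suffices to pick $\alpha$ with $\nu_{\alpha}$ injective on $Z$, i.e. $\nu_{\alpha}(d)\neq 0$ for all nonzero $d\in Z-Z$. For a fixed such $d$, the condition $\nu_{\alpha}(d)=0$ is a nontrivial linear equation in $\alpha$ modulo $n_{r}$ (nontrivial exactly because $d\neq 0$ in $G$), hence holds for at most a $1/p$ fraction of residues $\alpha\bmod n_{r}$; since $|Z-Z|-1\leqslant k(k-1)<2^{k}\leqslant p$, a union bound gives a good $\alpha$. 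Replacing $(G,Z)$ by $(\mathbf{Z}/n_{r}\mathbf{Z},\nu_{\alpha}(Z))$, whose size is still $k\leqslant\log_{2}p(\mathbf{Z}/n_{r}\mathbf{Z})$, we may assume $G=\mathbf{Z}/N\mathbf{Z}$ is cyclic with $k\leqslant\log_{2}p(N)$.

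If $N$ is prime this is exactly Lemma~\ref{rectificationp}. If $N$ is composite then its least prime factor is $p\geqslant 2^{k}$, so in fact $N\geqslant p^{2}\geqslant 4^{k}$, giving much more room. Here I would use dilation-and-lifting: find a unit $\lambda\in(\mathbf{Z}/N\mathbf{Z})^{\times}$ and a shift $c$ so that the elements of $\lambda Z+c$, taken via their representatives in $\{0,1,\dots,N-1\}$, all lie in $\{0,1,\dots,\lceil N/2\rceil-1\}$. Then every two-term sum of these representatives is $<N$, so no reduction mod $N$ takes place and the inclusion $\lambda Z+c\hookrightarrow\mathbf{Z}$ is a Freiman isomorphism; since $z\mapsto\lambda z+c$ is a Freiman automorphism of $\mathbf{Z}/N\mathbf{Z}$, the set $Z$ is rectified. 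To produce $\lambda$ and $c$, translate so that $0\in Z$; a Dirichlet/pigeonhole argument on the points $(\lambda z/N)_{z\in Z\setminus\{0\}}\in(\mathbf{R}/\mathbf{Z})^{k-1}$, $\lambda=0,\dots,N-1$, yields many $\lambda$ for which all the $\lambda z$ are confined to a short arc about $0$, and one then selects among these a $\lambda$ that is invertible (equivalently, for which $z\mapsto\lambda z$ stays injective on $Z$); it is in carrying out this last selection that the slack $N\geqslant p^{2}$, rather than merely $N\geqslant p$, is needed.

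The main obstacle is the quantitative core: making a counting argument succeed precisely at the threshold $k\leqslant\log_{2}p$, which is what pins down the base $2$. In the prime case this is Lemma~\ref{rectificationp} itself, so one leans on that; in the composite cyclic case the naive union bound over non-invertible $\lambda$ is too wasteful, so the selection of $\lambda$ above must be done more carefully (via a second-moment or character-sum estimate, or by first passing to a subgroup when $Z$ lies in a coset of one — which controls the differences with large $\gcd$ with $N$). A uniform alternative is linear-algebraic: in $\mathbf{Z}^{k}$ one looks for an integer linear functional vanishing on exactly those quadruple-difference vectors $e_{a}+e_{b}-e_{c}-e_{d}$ lying in the kernel of the map $\mathbf{Z}^{k}\to G$ sending $e_{i}$ to the $i$-th element of $Z$; the only obstruction is that some quadruple-difference outside this kernel lies in the rational span of those inside it, and pushing forward to $G$ shows that this would force an element of $G$ whose order divides a determinant of a $(k-1)\times(k-1)$ matrix built from such vectors. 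One wants that determinant to be $<p$; Hadamard's inequality gives only $(\sqrt{6})^{\,k-1}$, which is $<2^{k}$ only for small $k$, so this route too must ultimately be fed the sharper estimate.
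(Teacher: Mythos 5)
The paper does not prove this lemma; it is cited from Lev \cite{levrect}, Theorem 1. Your proposal is therefore an independent attempt, and it contains two genuine gaps.

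In the reduction to the cyclic case, your claim that ``an injective [group homomorphism] restricts to a Freiman isomorphism onto its image'' is correct only when the homomorphism is injective on all of $G$ (or at least on $\langle Z\rangle$); you then weaken the requirement to injectivity on $Z$, i.e.\ nonvanishing of $\nu_\alpha$ on $(Z-Z)\setminus\{0\}$. That is not sufficient: for $\nu_\alpha|_Z$ to be a Freiman isomorphism one needs $\nu_\alpha$ nonvanishing on $(Z+Z-Z-Z)\setminus\{0\}$. A concrete failure: take $G=\mathbf{Z}/8\mathbf{Z}$, $Z=\{0,1,2\}$ and $\nu\colon x\mapsto x\bmod 4$; $\nu$ is injective on $Z$, yet $\nu(2)+\nu(2)=\nu(0)+\nu(0)$ in $\mathbf{Z}/4\mathbf{Z}$ while $2+2\neq 0+0$ in $G$, so $(\nu|_Z)^{-1}$ is not a Freiman homomorphism. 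Repairing this by running the union bound over the roughly $k^4$ nonzero quadruple differences requires $k^4<p$, and since you only know $p\geqslant 2^k$ this fails for $k$ up to about $15$; those small cases would need a separate argument. This flaw is repairable, but it is a real gap in the written argument.

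The more serious gap, which you yourself flag, is the composite cyclic case. The Dirichlet/pigeonhole step produces dilations $\lambda$ confining the representatives of $\lambda Z$ to a short arc, but gives no way to select among them a $\lambda$ for which $z\mapsto\lambda z$ is a Freiman isomorphism on $Z$ (invertibility of $\lambda$, or even nonvanishing of $\lambda$ on the nonzero quadruple differences of $Z$). You concede that the naive union bound over non-units is too wasteful and that the linear-algebraic alternative via Hadamard's inequality gives only $(\sqrt 6)^{k-1}$ rather than $2^k$, and none of the proposed refinements (second-moment/character-sum estimates, passing to a subgroup) is carried out. Since the composite case is exactly the new content of Lemma~\ref{rectificationG} beyond Lemma~\ref{rectificationp}, the proof is essentially incomplete. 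Lev's actual argument is not a bootstrap from the prime case of this kind.
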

We show how one can easily recover the previous best known lower bound $m(G)\gg \log p(G)$ using these lemmas. Indeed, suppose that $A\subset G$ has no unique sum. Then by definition, neither does any set that is Freiman-isomorphic to $A$. In particular, $A$ cannot be rectifiable since any finite set of integers $A'$ trivially has a unique sum, namely $\max A'+\max A'$. Thus, Lemma \ref{rectificationG} implies that $|A|> \log_2 p(G)$ as desired. Such arguments using rectification to find a unique sum go back to Straus's original paper \cite{straus}, and note that one can deduce Theorem \ref{levbound}  from Lemma \ref{rectificationG} in this way.
\bigskip

\textbf{Outline of the proof.} In section 3, we prove a general structural result about sets $Z\subset G$ with large additive span, by which we mean that $\Sigma(Z)=\left\{\sum_{z'\in Z'} z': Z'\subset Z\right\}$ is large. To be precise, we show that if $\left|\Sigma(Z)\right|$ is large, then $Z$ contains a large dissociated subset. In the additive combinatorics literature, the size of the largest dissociated subset of $Z$ is often referred to as the additive dimension $\dim(Z)$ of $Z$. Using this terminology, we show precisely in section 3 that sets with large additive span have large additive dimension. In section 4, we show that if $A\subset G$ is a set having no unique sum, then some translate $A+g$ of $A$ has very large additive span in the sense that $\Sigma(A+g)$ contains a non-trivial subgroup of $G$. For the most interesting case where $G=\mathbf{Z}/p\mathbf{Z}$, this shows that $\Sigma(A+g)=\mathbf{Z}/p\mathbf{Z}$ is the whole group.
\medskip

Combining the results from sections 3 and 4, we obtain that if $A$ has no unique sum, then some translate  $A+g$ has large additive dimension. Note that $A'=A+g$ also contains no unique sum. Finally, in section 5 we employ a density increment argument to prove that a set $A'$ with no unique sum cannot contain a dense dissociated subset, i.e. we show that $\dim(A')=o_{p(G)}(1) \cdot |A'|$ as $p(G)\to \infty$. As sections 3 and 4 imply that $\dim(A')$ is large, this will yield the required lower bound $|A|=|A'|\geqslant \omega(p(G)) \log p(G)$.

\section{Sets with small dimension have small additive span}
In this short section, we will prove an inequality that holds for any subset $Z$ of an Abelian group $G$. The proof of this result is self-contained and one can forget about sets having no unique sum in this whole section. We begin with three important definitions from additive combinatorics.
\begin{definition}
    \normalfont Let $G$ be a finite Abelian group and let $S\subset G$. We say that $S$ is \textit{dissociated} if whenever there exist $(\mu_s)_{s\in S}\in \{-1,0,1\}^{S}$ so that $\sum_{s\in S}\mu_s s = 0$, then $\mu_s = 0$ for all $s\in S$. Equivalently, $S$ is dissociated if whenever $S_1,S_2\subset S$ with $\sum_{s\in S_1}s=\sum_{s\in S_2}s$, then $S_1=S_2$.
\end{definition} 
The notion of additive dimension is an important concept in additive combinatorics and there is an extensive literature on this topic, see for example \cite{schoen,ilyaserg} and the references therein.
\begin{definition}
    \normalfont Let $G$ be a finite Abelian group and let $S\subset G$. Then we define the \textit{additive dimension} $\dim(S)$ of $S$ to be the size of the largest dissociated subset of $S$.
\end{definition}
The use of the word dimension in this setting is natural in light of the following observation.
\begin{lemma}
    If $S\subset G$ and $D\subset S$ is a maximal dissociated subset of size $|D|=\dim(S)$, then $S$ is contained in the cube $$\left\{\sum_{d\in D} \mu_d d: \mu_d\in \{-1,0,1\}\right\}.$$
\label{dimlem}
\end{lemma}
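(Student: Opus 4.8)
The statement to prove is Lemma~\ref{dimlem}: if $D \subset S$ is a maximal dissociated subset with $|D| = \dim(S)$, then every element of $S$ lies in the cube $\left\{\sum_{d\in D}\mu_d d : \mu_d \in \{-1,0,1\}\right\}$.

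The plan is to argue by contradiction using the maximality of $D$. Suppose some $s \in S$ is not expressible as $\sum_{d\in D}\mu_d d$ with coefficients in $\{-1,0,1\}$. I claim that $D \cup \{s\}$ is then itself a dissociated set, which contradicts the assumption that $D$ has the maximum possible size $\dim(S)$ among dissociated subsets of $S$. (Note $s \notin D$, since every element of $D$ trivially lies in the cube by taking a single coefficient equal to $1$; so $D \cup \{s\}$ genuinely has size $|D|+1$.)

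To verify the claim, suppose for contradiction that $D \cup \{s\}$ is \emph{not} dissociated, so there are coefficients $(\mu_d)_{d\in D} \in \{-1,0,1\}^D$ and $\nu \in \{-1,0,1\}$, not all zero, with $\nu s + \sum_{d\in D}\mu_d d = 0$. If $\nu = 0$ this would give a nontrivial $\{-1,0,1\}$-relation among the elements of $D$ alone, contradicting that $D$ is dissociated; hence $\nu \in \{-1,1\}$. Multiplying the relation by $\nu$ (legitimate since $\nu^2 = 1$ and $-\mu_d \in \{-1,0,1\}$ whenever $\mu_d \in \{-1,0,1\}$), we get $s = \sum_{d\in D}(-\nu\mu_d) d$ with each coefficient $-\nu\mu_d \in \{-1,0,1\}$, so $s$ lies in the cube --- contradicting our choice of $s$. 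This proves the claim and hence the lemma.

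I do not expect any real obstacle here: the argument is a short direct deduction from the definition of dissociativity and the maximality of $D$. The only minor point to be careful about is the sign bookkeeping when $\nu = -1$, and the observation that $s \notin D$ so that adjoining $s$ strictly increases the size; both are handled above.
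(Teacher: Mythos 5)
Your proof is correct and takes essentially the same approach as the paper: the key observation in both is that by maximality $D\cup\{s\}$ cannot be dissociated, which produces a $\{-1,0,1\}$-relation in which the coefficient of $s$ must be nonzero (since $D$ itself is dissociated), letting you solve for $s$. Your version just wraps this in an extra layer of contradiction; the content is identical.
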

\begin{proof}
Let $s\in S$, if $s\in D$ then $s$ trivially lies in this additive cube. Otherwise, $D\cup\{s\}$ is a strictly larger subset of $S$ so not dissociated whence we get a non-trivial relation of the form $\mu_ss+\sum_{d\in D}\mu_dd=0$. As $D$ is dissociated, $\mu_s\neq 0$ and the result follows.
\end{proof}
We need one more definition.
\begin{definition}
\normalfont
    Let $G$ be a finite Abelian group. For a subset $Z\subset G$ we define its \textit{additive span} to be the set 
    \begin{equation}\Sigma (Z)\vcentcolon = \left\{\sum_{z\in Z} \varepsilon_z z: \varepsilon_z \in \{0,1\}\right\} = \left\{\sum_{z\in Z'} z:Z'\subseteq Z\right\}.
    \label{addispandefi}
    \end{equation}
This definition also makes sense when $Z$ is a finite multiset consisting of elements of $G$. In this case, every $z\in Z$ appears $k$ times in the sum $\sum_{z \in Z}\varepsilon_zz$ in \eqref{addispandefi} if $Z$ contains $k$ copies of $z$.
We say that a (multi-)set $S\subset G$ is an \textit{additive basis} for $G$ if $\Sigma(S) = G$. In other words, $S$ is an additive basis if for every element $g\in G$, there is some (multi-)subset $S_g \subseteq S$ whose elements sum to $g$.
\end{definition}
Our aim in this section is to find an upper bound on $\left|\Sigma(Z)\right|$. Observe that the trivial bound $\left|\Sigma(Z)\right|\leqslant 2^{|Z|}$ always holds. In general, one can of course not improve on this trivial bound as $\left|\Sigma(Z)\right|=2^{|Z|}$ if $Z$ is a dissociated set. Similarly, the additive span $\Sigma(Z)$ will be large if $Z$ contains a fairly large subset which is dissociated. It is therefore natural to wonder if this is in a sense the only reason why $\Sigma(Z)$ can be large, meaning that if $\Sigma(Z)$ is large then it implies that $Z$ contains a large dissociated subset. The following proposition shows that this result is indeed true.
\begin{proposition}
    Let $G$ be an Abelian group and let $Z$ be a finite multiset consisting of elements of $G$. Then
    \begin{equation}
    \left|\Sigma(Z)\right|\leqslant {|Z|\choose \dim(Z)}{|Z|+\dim(Z)\choose \dim(Z)}.
    \label{addispanineq}
    \end{equation}
\label{addispanineqprop}
\end{proposition}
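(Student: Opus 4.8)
Set $n=|Z|$ and $d=\dim(Z)$. The plan is to reduce the problem to counting subset sums of integer coefficient vectors over a single dissociated set, and then to bound that count by a lattice-point estimate. First I would fix a \emph{largest} dissociated subset $D\subseteq Z$, so $|D|=d$ and $D$ is in particular maximal; by Lemma \ref{dimlem} (whose proof applies verbatim to multisets) every element of $Z$ lies in the cube generated by $D$, so for each $z\in Z$ we may fix a representation $z=\sum_{e\in D}\mu^{(z)}_e e$ with all $\mu^{(z)}_e\in\{-1,0,1\}$, taking the trivial one when $z\in D$. Any $g\in\Sigma(Z)$ equals $\sum_{z\in Z'}z$ for some $Z'\subseteq Z$; substituting and collecting terms yields $g=\sum_{e\in D}c_e e$ with $c=(c_e)_{e\in D}\in\mathbf{Z}^{D}$, where $c_e=\sum_{z\in Z'}\mu^{(z)}_e$. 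Fixing one choice of $Z'$ for each $g$, the assignment $g\mapsto c$ is injective, since $c$ determines $g=\sum_{e\in D}c_e e$. Hence $|\Sigma(Z)|$ is at most the number of coefficient vectors $c$ arising this way.

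It then remains to bound that number by $\binom{n}{d}\binom{n+d}{d}$, and the mechanism I would use is to split $c=c^{+}-c^{-}$ into its positive and negative parts (nonnegative integer vectors of disjoint support) and count the two pieces separately. The elementary input is that the number of vectors in $\mathbf{Z}_{\ge 0}^{D}$ of $\ell^{1}$-norm at most $R$ is exactly $\binom{R+d}{d}$, by stars and bars. The target is to organise the data so that $c^{+}$ is confined to $\ell^{1}$-norm at most $n$, contributing the factor $\binom{n+d}{d}$, while $c^{-}$ is confined to $\ell^{1}$-norm at most $n-d$, contributing $\binom{(n-d)+d}{d}=\binom{n}{d}$; the bound follows since distinct $g$ yield distinct $c$, hence distinct pairs $(c^{+},c^{-})$. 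An alternative source for the factor $\binom{n}{d}$, worth trying in parallel, is to encode $g$ instead by \emph{a} dissociated subset of $Z$ of size $d$ together with the coefficient vector of $g$ over it: there are at most $\binom{n}{d}$ such subsets, and, choosing one containing a maximal dissociated subset of $Z'$, the coefficient vector becomes supported on only $\dim(Z')\le d$ coordinates.

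The step I expect to be the main obstacle is exactly the $\ell^{1}$ control on $c^{+}$ and $c^{-}$. The naive term-by-term bound is far too weak: a single element of $Z\setminus D$, when expanded over $D$, may have up to $d$ nonzero coefficients, so summing over $Z'$ gives only $\|c^{\pm}\|_{1}=O(nd)$ rather than $O(n)$, which already loses for $d=2$. Reaching the sharp-order estimate should require genuinely using the structure — that the $D\cap Z'$ contribution to $c$ is $\{0,1\}$-valued, that each element of $(Z\setminus D)\cap Z'$ contributes a single $\{-1,0,1\}$-vector, and the cancellation between positive and negative contributions — rather than estimating termwise. A second route I would pursue is induction on $n-\dim(Z)$: the base case $n=d$ is $|\Sigma(Z)|=2^{n}\le\binom{2n}{n}$, and the inductive step uses $\Sigma(Z)=\Sigma(Z\setminus z_{0})\cup\bigl(\Sigma(Z\setminus z_{0})+z_{0}\bigr)$ for a well-chosen $z_{0}\in Z\setminus D$ (so that $\dim$ is unchanged); here a crude factor $2$ per step is too lossy, and the real work becomes a good lower bound on the overlap $\Sigma(Z\setminus z_{0})\cap\bigl(\Sigma(Z\setminus z_{0})+z_{0}\bigr)$, exploiting that $z_{0}$ is a nontrivial $\{-1,0,1\}$-combination of the other elements.
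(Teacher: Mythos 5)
Your main route — fix one maximal dissociated $D$, expand each $z\in Z$ as a $\{-1,0,1\}$-vector over $D$, split the resulting coefficient vector $c$ into $c^{+}-c^{-}$, and count — runs into exactly the obstacle you flag, and nothing in your proposal closes it. The difficulty is structural, not just technical: having committed to a single basis $D$, an element of $Z\setminus D$ may have up to $d$ nonzero coordinates, and you have no mechanism forcing cancellation, so the best you can say is $\|c\|_{1}=O(nd)$. The factorisation of the target bound as $\binom{n}{d}\binom{n+d}{d}$ is itself a hint that one should \emph{not} fix a single $D$: the first factor is most naturally a count of dissociated $d$-subsets of $Z$, one chosen per element of $\Sigma(Z)$. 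Your fallback (``encode $g$ by a dissociated subset of $Z$ of size $d$ together with the coefficient vector of $g$ over it'') is indeed much closer to what works, but you do not say how to produce, for each $g$, a representation whose support is dissociated \emph{and} whose $\ell^{1}$-norm is at most $n$ — that is the entire content of the proposition.

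The paper supplies exactly this missing ingredient via what it calls a support-compression. Starting from the tautological $\{0,1\}$-representation $y=\sum_{z}\varepsilon_{z}(y)z$, which has $\ell^{1}$-norm $a(y)\leqslant |Z|$ for free, it looks at all nonnegative integer vectors $(n_{z})$ representing $y$ with $\sum_{z}n_{z}\leqslant a(y)$ and picks one with smallest support. The key lemma is that this minimiser automatically has dissociated support: if the support contained disjoint $K_{1},K_{2}$ with equal sums and $|K_{1}|\geqslant|K_{2}|$, subtracting $k^{-}=\min_{z\in K_{1}}k_{z}$ on $K_{1}$ and adding it on $K_{2}$ strictly shrinks the support without increasing $\sum_{z}n_{z}$. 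This one step transfers the automatic $\ell^{1}$ control from the $\{0,1\}$-vector to a vector with dissociated support, after which the count is exactly what you sketched: extend the support to a dissociated set of size $d$ (at most $\binom{n}{d}$ choices), then count nonnegative vectors on it with $\ell^{1}$-norm $\leqslant n$ ($\binom{n+d}{d}$ by stars and bars). Your second alternative, induction on $n-d$ with a lower bound on the overlap $\Sigma(Z\setminus z_{0})\cap\bigl(\Sigma(Z\setminus z_{0})+z_{0}\bigr)$, is likewise left as an open subproblem; I don't see how to settle it without proving something at least as strong as the compression lemma. So the proposal correctly diagnoses where the work lies but does not carry it out.
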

Hence, if $\left|\Sigma(Z)\right|$ is large, then $\dim(Z)$ is large which means precisely that $Z$ has a large dissociated subset. We state a bound which is more useful in practice.
\begin{cor}
    Let $G$ be an Abelian group and let $Z$ be a finite multiset consisting of elements of $G$. Then
    \begin{equation}
    \left|\Sigma(Z)\right|\leqslant 2^{2 \dim(Z)\cdot\Big(\log_2\left(\frac{|Z|}{\dim(Z)}\right)+2\Big)} = \left(\frac{4|Z|}{\dim(Z)}\right)^{2 \dim(Z)}.
    \label{addispanboundcor}
    \end{equation}
\label{addispanineqcor}
    
\end{cor}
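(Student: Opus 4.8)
The plan is to obtain Corollary~\ref{addispanineqcor} as a purely computational consequence of Proposition~\ref{addispanineqprop}, by bounding each of the two binomial coefficients in \eqref{addispanineq} via the elementary estimate $\binom{n}{k}\leqslant (en/k)^{k}$, valid for $1\leqslant k\leqslant n$ and itself following from $\binom{n}{k}\leqslant n^{k}/k!$ together with $k!\geqslant (k/e)^{k}$. Throughout I would write $n=|Z|$ and $d=\dim(Z)$. If $d=0$ then $Z$ contains no nonzero element, so $\Sigma(Z)=\{0\}$ and there is nothing to prove; hence I may assume $1\leqslant d\leqslant n$.

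First I would estimate the first factor by $\binom{n}{d}\leqslant (en/d)^{d}$. For the second factor I would use $d\leqslant n$ to write $n+d\leqslant 2n$, which gives $\binom{n+d}{d}\leqslant \bigl(e(n+d)/d\bigr)^{d}\leqslant (2en/d)^{d}$. Multiplying the two bounds yields
\[
\left|\Sigma(Z)\right|\leqslant \left(\frac{en}{d}\right)^{d}\left(\frac{2en}{d}\right)^{d}=\left(\frac{2e^{2}n^{2}}{d^{2}}\right)^{d}.
\]

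The final step is the constant chase $2e^{2}<16$ (which holds with room to spare, since $e^{2}<8$); this upgrades the previous display to $\left|\Sigma(Z)\right|\leqslant (16n^{2}/d^{2})^{d}=(4n/d)^{2d}$, and rewriting $(4n/d)^{2d}=2^{2d\log_{2}(4n/d)}=2^{2d(\log_{2}(n/d)+2)}$ gives precisely \eqref{addispanboundcor}. I do not expect any genuine obstacle here: the only points needing a moment's care are the degenerate case $d=0$ and verifying that the slack in $2e^{2}\leqslant 16$ is real, both of which are immediate.
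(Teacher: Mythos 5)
Your proposal is correct and essentially identical to the paper's argument: both bound $\binom{|Z|}{d}$ and $\binom{|Z|+d}{d}$ using the standard inequality $\binom{n}{k}\leqslant(en/k)^{k}$ together with $|Z|+d\leqslant 2|Z|$, multiply to get $(2e^{2}|Z|^{2}/d^{2})^{d}$, and observe $2e^{2}\leqslant 16$ to reach $(4|Z|/d)^{2d}$. The only (harmless) addition on your part is explicitly noting the degenerate case $d=0$.
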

Clearly, we always have the lower bound $\left|\Sigma(Z)\right|\geqslant 2^{\dim(Z)}$ and one may ask if the extra factor $\log_2\left(\frac{|Z|}{\dim(Z)}\right)$ in the exponent in \eqref{addispanboundcor} is necessary. The following example shows that in fact it is necessary. Pick an integer $d$ and consider $G=\mathbf{Z}^d$ with standard generating set $\{e_1,e_2,\dots,e_d\}$. Then we can take $Z$ to be the the multiset consisting of $k$ copies of each $e_i$ with $1\leqslant i\leqslant d$. It is easy to see that $\dim(Z)=d$, but $\Sigma(Z) = \left\{\sum_{i=1}^d n_ie_i: 0\leqslant n_i\leqslant k\text{ for each $i$}\right\}$ has size $(k+1)^d > \left(\frac{|Z|}{d}\right)^{d}$. Let us now give the proof of Proposition \ref{addispanineqprop}.
\begin{proof}[Proof of Proposition \ref{addispanineqprop}]
We consider an element $y\in \Sigma(Z)$, so we may find coefficients $\varepsilon_z(y) \in \{0,1\}$ for $z\in Z$ so that \begin{equation}
    y=\sum_{z\in Z} \varepsilon_z(y) z,
    \label{yrepr}
\end{equation}
where each $z$ in the multiset $Z$ occurs with multiplicity in this sum. Our idea is to use a type of compression on these sums until every $y$ can be expressed as a sum of elements in $Z$ with small support. We will use a different type of compression in a later section, so to avoid confusion we call the type of compressions used in this section `support-compressions'. For an expression $y=\sum_{z\in Z} n_z z$ with non-negative integers $n_z$ which is not already maximally compressed, a `support-compression' yields a new expression $y=\sum_z m_z z$ with smaller support, i.e. the multiset $\{z\in Z: m_z\neq 0\}$ is smaller than $\{z\in Z: n_z\neq 0\}$. Repeatedly applying this shows that every $y$ in $\Sigma(Z)$ can be expressed as a sum of elements of $Z$ of the form $y=\sum_{z\in Z} n_z z$ whose support is a dissociated subset of $Z$ and with $\sum_z n_z$ not too large. A combinatorial counting argument then yields \eqref{addispanineq}.
\smallskip

We make this argument precise. For each $y\in \Sigma(Z)$, define \begin{equation}
    a(y)\vcentcolon=\sum_{z\in Z}\varepsilon_z(y) \in \{0,1,\dots, |Z|\}
    \label{a(y)defi}
\end{equation} where the $\varepsilon_z(y)\in \{0,1\}$ are so that \eqref{yrepr} holds (if there is more than one choice, we just pick one of these arbitrarily). We now define the following set
\begin{align}
    T(y)&\vcentcolon= \Big\{(n_z)_{z\in Z}\in \mathbf{N}^{Z}: y = \sum_{z\in Z} n_z z\text{ and }\sum_{z\in Z} n_z\leqslant a(y)\Big\}.
\end{align}
For each $|Z|-$tuple $(n_z)$ in $T(y)$, we define its support-size as follows
\begin{equation}
    \supp((n_z)_{z\in Z}) \vcentcolon= \left|\{z\in Z: n_z\neq 0\}\right|
\end{equation}
counted with multiplicity if $Z$ is a multiset. We begin by noting that the set $T(y)$ is non-empty because $y=\sum_{z\in Z}\varepsilon_z(y) z$ by \eqref{yrepr} and $\sum_z \varepsilon_z(y) = a(y)$ by \eqref{a(y)defi} so $(\varepsilon_z(y))_{z\in Z}\in T(y)$. Hence, we can consider an element $(k_z(y))_{z\in Z}\in T(y)$ with minimal support-size $\supp((k_z(y))_{z\in Z})$. Let $K=K(y)=\{z\in Z: k_z(y)\neq 0\}$ be its support, so $K$ is a multisubset of $Z$ and we obtain the following information about $K$.
\begin{lemma}
    Let $y\in \Sigma(Z)$ and let $(k_z)_{z\in Z}\in T(y)$ be chosen to minimise $\supp((k_z)_{z\in Z})$ over all elements of $T(y)$. Then $K=\{z\in Z: k_z\neq 0\}$ is a dissociated subset of $Z$.
\label{dissK}
\end{lemma}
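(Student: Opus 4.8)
\textbf{Proof proposal for Lemma \ref{dissK}.}
The plan is a proof by contradiction that exploits the minimality of the support-size directly: assuming $K$ is not dissociated, I will construct from $(k_z)_{z\in Z}$ another tuple in $T(y)$ whose support is strictly smaller, contradicting the choice of $(k_z)_{z\in Z}$.

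So suppose $K$ is not dissociated. By the definition of dissociativity there is a tuple $(\mu_z)_{z\in Z}\in\{-1,0,1\}^Z$, not identically zero and supported on $K$, with $\sum_{z\in Z}\mu_z z=0$. Write $P=\{z\in Z:\mu_z=1\}$ and $N=\{z\in Z:\mu_z=-1\}$, so $P$ and $N$ are disjoint sub-multisets of $K$, not both empty, and $\sum_{z\in P}z=\sum_{z\in N}z$. After replacing $(\mu_z)$ by $(-\mu_z)$ if necessary we may assume $|P|\geqslant|N|$; in particular $P\neq\emptyset$, and every $z\in P$ has $k_z\geqslant 1$ since $P\subseteq K$. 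The idea is now to push $(k_z)$ along this zero-sum relation: for a non-positive integer $t$ set $k_z^{(t)}\vcentcolon= k_z+t\mu_z$. Because $\sum_z\mu_z z=0$, every such tuple still satisfies $\sum_z k_z^{(t)} z=y$. I will take $t=-\min_{z\in P}k_z\leqslant 0$.

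It then remains to check two things. First, that $(k_z^{(t)})_{z\in Z}\in T(y)$: the entries are non-negative (off $K$ nothing changes and stays $0$; on $N$ and on $K\setminus(P\cup N)$ the entries do not decrease; on $P$ the choice of $t$ ensures no entry drops below $0$), and the size bound holds since $\sum_z k_z^{(t)}=\sum_z k_z+t(|P|-|N|)\leqslant\sum_z k_z\leqslant a(y)$, using $t\leqslant 0$ and $|P|\geqslant|N|$. Second, that the support strictly decreases: the support of $(k_z^{(t)})$ is contained in $K$, all entries on $N$ and on $K\setminus(P\cup N)$ remain nonzero, but any index $z_0\in P$ attaining $\min_{z\in P}k_z$ now has $k_{z_0}^{(t)}=0$. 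Hence $\supp((k_z^{(t)})_{z\in Z})<\supp((k_z)_{z\in Z})$, contradicting minimality, and the lemma follows.

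The argument is short, and the one genuinely delicate point — and the reason the constraint $\sum_z n_z\leqslant a(y)$ was built into the definition of $T(y)$ in the first place — is that the shift along $(\mu_z)$ must simultaneously preserve non-negativity of the entries (which forces us to move in the direction $t\leqslant 0$, i.e.\ to empty out part of $P$) and not violate the size bound $\sum_z n_z\leqslant a(y)$. Orienting the relation so that $|P|\geqslant|N|$ is exactly what reconciles these two requirements.
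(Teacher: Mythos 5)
Your proof is correct and is essentially the paper's own argument, only phrased via the $\{-1,0,1\}$-coefficient form of dissociativity: your $P$, $N$, and $t=-\min_{z\in P}k_z$ play exactly the roles of the paper's disjoint equal-sum sets $K_1$, $K_2$ and its $k^-=\min_{z\in K_1}k_z$, and the non-negativity, size-bound, and support-drop checks are the same three checks the paper performs.
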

\begin{proof}
Suppose for a contradiction that the multiset $K$ is not dissociated. Hence there exist distinct multisubsets $K_1,K_2$ of $K$ so that $\sum_{z\in K_1}z=\sum_{z\in K_2}z$. We may further assume that $K_1$ and $K_2$ are disjoint as removing common elements in $K_1\cap K_2$ from both multisets does not change that both multisets have equal sum. Also assume that $|K_1|\geqslant |K_2|$ and define $k^- = \min_{z\in K_1} k_z$.
    Then we can write
\begin{align}
    y &= \sum_{z\in K} k_z z \nonumber\\
    &= \sum_{z\in K\setminus{(K_1\cup K_2)}} k_z z +\sum_{z\in K_1} (k_z-k^-)z+\sum_{z\in K_2} (k_z+k^-)z \label{rewriteiterate}    
\end{align}
so we can construct a new tuple $(k'_z)_{z\in Z}$ as follows by defining:
\begin{equation}
       k'_z= \left\{
  \begin{array}{@{}ll@{}}
    k_z, & \text{if}\ z\in Z\setminus{(K_1\cup K_2)} \\
    k_z-k^-, & \text{if}\ z\in K_1 \\
    k_z+k^-, & \text{if}\ z\in K_2. 
  \end{array}\right.
\label{k'definition}
    \end{equation}
We proceed by showing that $(k'_z)_{z\in Z}\in T(y)$. First, it is clear from the definition \eqref{k'definition} that each $k'_z$ is a non-negative integer as $(k_z)_{z\in Z}\in T(y)$ and $k^-\leqslant k_z$ for all $z\in K_1$. From \eqref{rewriteiterate}, we see that
$$y = \sum_{z\in Z} k'_z z.$$ Finally, from \eqref{k'definition} we observe that 
\begin{align*}
    \sum_{z\in Z}k'_z &= \sum_{z\in Z} k_z -k^-|K_1|+k^-|K_2|\\
    &\leqslant \sum_{z\in Z}k_z\\
    &\leqslant a(y)
\end{align*}
using that $|K_1|\geqslant |K_2|$. Hence, $(k'_z)_{z\in Z}\in T(y)$. Our final task to obtain the required contradiction is to show that $\supp((k'_z))<\supp((k_z)).$ This is clear from \eqref{k'definition} however as we defined $k^- = \min_{z\in K_1}k_z$. We have obtained the required contradiction as $(k_z)$ was chosen to minimise $\supp((k_z))$ over all sequences in $T(y)$. Hence, $K$ must be dissociated.
\end{proof}
We continue with the proof of Proposition \ref{addispanineqprop}.
By Lemma \ref{dissK}, every $y\in \Sigma(Z)$ can be written as \begin{equation}
    y=\sum_{z\in K}k_z(y)z
    \label{yreprmini}
\end{equation} for some dissociated set $K=K(y)\subset Z$ and with $\sum_{z\in K}k_z(y)\leqslant a(y)\leqslant |Z|$. Let us write $X$ for the set of sequences $(n_z)_{z\in Z}\in \mathbf{N}^{Z}$ whose support is a dissociated subset of $Z$ and with $\sum_z n_z\leqslant |Z|$. We upper bound the size of $X$. Pick any sequence $(n_z)_{z\in Z}\in \mathbf{N}^{Z}$ in $X$ and let $N$ be its support. So $N$ is dissociated and as the largest dissociated subset of $Z$ has size $\dim(Z)$, we can fix a set $N'$ containing $N$ and of size exactly $\dim(Z)$. Then there are at most
\begin{align*}
    {|Z|\choose \dim(Z)}
\end{align*}
choices of $N'$ over all sequences in $X$. Given a set $N'$, it is a standard combinatorial fact that the number of sequences $(m_z)_{z\in N}\in \mathbf{N}^{N'}$ with $\sum_{z\in N'}m_z\leqslant |Z|$ is 
$${|Z|+|N'|\choose |N'|}={|Z|+\dim(Z)\choose \dim(Z)},$$
and clearly the sequence $(n_z)_{z\in Z}\in \mathbf{N}^{Z}$ is counted here as its support $N$ is contained in $N'$.
Hence, in total we get that \begin{equation}|X|\leqslant {|Z|\choose \dim(Z)}{|Z|+\dim(Z)\choose \dim(Z)}.
\label{Xupper}
\end{equation}
On the other hand, every $y\in \Sigma(Z)$ gives rise to the sequence $(k_z(y))_{z\in Z}$ as in \eqref{yreprmini} whose support is dissociated and with $\sum_z k_z(y)\leqslant |Z|$. Hence, $(k_z(y))_{z\in Z}$ in $X$. As $\sum_z k_z(y)z = y$ holds in $G$, no two distinct $y,y'\in \Sigma(Z)$ can give rise to the same sequence $(k_z(y))_{z\in Z}$ so that \begin{equation}|X|\geqslant \left|\Sigma(Z)\right|.
\label{Xlower}\end{equation}
Combining inequalities \eqref{Xupper} and \eqref{Xlower} yields the desired result \eqref{addispanineq}.
\end{proof}
To conclude this section, we simplify the bound obtained in Proposition \ref{addispanineqprop} to obtain Corollary \ref{addispanineqcor}.
\begin{proof}[Proof of Corollary \ref{addispanineqcor}]
    We use the following standard inequality for binomial coefficients with integers $0\leqslant r\leqslant n$:
     \begin{align*}
        {n+r\choose r}\leqslant \left(\frac{e(n+r)}{r}\right)^r,
    \end{align*}
    where $e$ is Euler's constant.
    Using inequality \eqref{addispanineq} and the inequality above with $n=|Z|$ and $r=\dim(Z)=d$ yields the desired bound
    \begin{align*}
    \left|\Sigma(Z)\right|&\leqslant {|Z|\choose d}{|Z|+d\choose d}\\
    &\leqslant {|Z|\choose d}{2|Z|\choose d}\\
    &\leqslant \left(\frac{e|Z|}{d}\right)^{d}\left(\frac{2e|Z|}{d}\right)^{d}\leqslant 2^{2d\log_2\left(\frac{4|Z|}{d}\right)}.
    \end{align*}
\end{proof}

\section{Balanced sets}
Let $A\subset G$ be a subset having no unique sum. To prove Theorem \ref{lowerboundthm}, we want to use Proposition \ref{addispanineqprop} with $Z$ = $A$ in order to deduce a lower bound on its size $|A|$. The first step in our proof of Theorem \ref{lowerboundthm} is therefore to show that the additive span $\Sigma(A)$ is large. It turns out that this step works under a weaker assumption than that $A$ has no unique sum, and this weaker property is all that is needed for this section.
\begin{definition}
\normalfont
    Let $B\subset G$ be a subset of an Abelian group $G$. We say that $B$ is \textit{balanced} if for every $b\in B$, there exist distinct $b_1,b_2\in B$ so that $2b=b_1+b_2$. In other words, $B$ is balanced if every element is the midpoint of a non-trivial 3-term arithmetic progression which is contained in $B$. If $B$ is balanced but does not contain two disjoint balanced subsets, then we say that $B$ is an \textit{irreducible balanced} set.
\end{definition}
Note that no finite subset of the integers is balanced, since the largest element is clearly not the midpoint of a non-trivial 3-term arithmetic progression contained in the set. Lemma \ref{rectificationp} therefore shows that if $B\subset \mathbf{Z}/p\mathbf{Z}$ is balanced, then $|B|\geqslant \log_2 p$ as $B$ cannot be rectifiable.
\begin{definition}
\normalfont
Let $G$ be a finite Abelian group. Then we define $b(G)$ to be the size of the smallest subset of $G$ which is balanced.
We also write $b(p)$ for $b\left(\mathbf{Z}/p\mathbf{Z}\right)$.
\end{definition}
Balanced sets have been studied in their own right in multiple papers, resulting in a very precise asymptotic for $b(p)$ which is correct up to lower order terms. In fact, $b(p)=(1+o(1))\log_2p$ where the lower bound follows from the rectification argument above and the upper bound comes from a construction of Nedev \cite{nedev}.
It is clear that if $A\subset G$ has no unique sum, then certainly the sum $a+a = 2a$ has a different representation as a sum of two elements in $A$ so $A$ is also a balanced set. Since balanced sets of size $(1+o(1))\log_2 p$ exist, the rectification bound is in a sense the only obstruction preventing a set from being balanced. For sets having no unique sum there are further obstructions, as the proof of Theorem \ref{generallowerboundthm} will show. 
\bigskip

From this section onward, all sets that we consider are proper sets (as opposed to multisets). Recall that for a proper set $S\subset G$, its additive span is simply the set of subset sums $$\Sigma (S)\vcentcolon = \left\{\sum_{s\in S'} s:S'\subseteq S\right\},$$
and we say that $S$ is an additive basis for $G$ if $\Sigma(S) = G$. The following proposition, whose proof we postpone to the end of this section, states that balanced sets have a translate with large additive span. 
\begin{proposition}
    Let $B\subset \mathbf{Z}/p\mathbf{Z}$ be a balanced set. Then there exists $g\in -B$ such that the translated set $B+g$ is an additive basis for $\mathbf{Z}/p\mathbf{Z}$.
\label{additivebasisp}
\end{proposition}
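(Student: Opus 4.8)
The plan is to find the translate $g\in -B$ by choosing $g=-b_0$ for a suitable $b_0\in B$, so that $0\in B+g$, and then to show $\Sigma(B+g)$ is a subgroup of $\mathbf{Z}/p\mathbf{Z}$; since $0\in B+g$ this subgroup is nonzero, hence all of $\mathbf{Z}/p\mathbf{Z}$ by primality. The key structural observation is that for any set $S$ containing $0$, the additive span $\Sigma(S)$ is closed under addition in a restricted sense: if $x=\sum_{s\in S_1}s$ and $y=\sum_{s\in S_2}s$ with $S_1\cap S_2=\emptyset$, then $x+y\in\Sigma(S)$. So the obstruction to $\Sigma(S)$ being a subgroup is exactly the inability to ``disjointify'' representations, and the balanced hypothesis is what we will use to get around this.

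Concretely, I would first translate so that $0\in B+g$; write $C=B+g$ and note that $C$ is still balanced (balancedness is translation-invariant: $2(b+g)=(b_1+g)+(b_2+g)$). Now I want to show $\Sigma(C)$ is closed under addition. The main idea: given $x,y\in\Sigma(C)$ with representing subsets $C_1,C_2$, I need to replace one representation by a disjoint one. Here is where being balanced enters — for any element $c\in C$ with $c\ne 0$, the relation $2c=c_1+c_2$ with $c_1,c_2$ distinct lets me trade a single copy of $c$ for the pair $\{c_1,c_2\}$, or conversely; iterating such moves I can try to push the support of the representation of $y$ off of the support of the representation of $x$. The element $0\in C$ is also useful as a ``free'' element that can be added to or removed from any subset sum without changing it. I expect the cleanest route is: show that $\Sigma(C)+\Sigma(C)\subseteq\Sigma(C)$ by an argument that, given overlapping representations, uses balancedness at a point of overlap to strictly decrease the overlap (a descent/extremality argument analogous to the support-compression of Section 3), so that in the minimal-overlap configuration the two representing sets are disjoint and their union represents $x+y$.

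The main obstacle I anticipate is controlling this disjointification cleanly: when I replace $c$ by $\{c_1,c_2\}$ to remove $c$ from an overlap, the new elements $c_1,c_2$ might themselves lie in the other representing set, creating fresh overlaps, so one must set up the right monovariant (for instance, minimise the size of $C_1\cap C_2$, or a weighted version thereof, over all valid pairs of representations of $x$ and $y$) and check the balanced move genuinely decreases it. Once additive closure is established, $\Sigma(C)$ is a nonempty subset closed under addition in the finite group $\mathbf{Z}/p\mathbf{Z}$; a standard argument (it contains $0$, and for each $c$ the sequence $c,2c,3c,\dots$ returns to $0$, giving inverses) shows it is a subgroup, and containing the nonzero element $0$'s partners — more simply, containing any nonzero element of $C$ — forces $\Sigma(C)=\mathbf{Z}/p\mathbf{Z}$. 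It remains only to rule out the degenerate case $B=\{0\}$ after translation, which cannot happen since balanced sets have size at least $\log_2 p>1$.
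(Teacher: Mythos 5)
Your high-level strategy is sound and even goes beyond the paper's logical route in an interesting way: you reduce to showing $\Sigma(C)+\Sigma(C)\subseteq\Sigma(C)$ for the translate $C=B+g$ containing $0$, from which $\Sigma(C)$ is a nonzero subgroup of $\mathbf{Z}/p\mathbf{Z}$, hence everything. The paper instead directly shows $\langle C\rangle\subseteq\Sigma(C)$. Both amount to the same compression phenomenon: every non-negative integer combination of elements of $C$ can be rewritten with coefficients in $\{0,1\}$ (modulo the harmless coefficient of $0\in C$).

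The genuine gap is that you never construct the monovariant that makes the disjointification terminate, and you yourself flag exactly the difficulty that kills the naive candidates. Minimising $|C_1\cap C_2|$ does not work in any direct way: a balanced move $2c\to c_1+c_2$ only makes sense when you have \emph{two} copies of $c$, one in each of $C_1,C_2$, so the move merges the two representations rather than acting on each separately, and $c_1,c_2$ can themselves land in the merged set creating fresh multiplicities. What the paper does instead is work with a single weighted representation $y=\sum_b n_b(b+g)$ with $\sum_b n_b$ held fixed, and maximise $\sum_b n_b w(b)$ for a carefully built weight $w(b)=2^{-s(b)}$, where $s(b)$ is a shortest directed path length in a graph whose edges encode chosen balanced relations $b\to b_1,b_2$. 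The inequality $w(b_1)+w(b_2)>2w(b)$, which makes each balanced move strictly increase weight, is exactly what the choice $w(b)=2^{-s(b)}$ is engineered to give, and proving that $s(b)$ is even finite (the connectedness lemma) uses minimality of the chosen balanced subset. None of this is present in your sketch, and I do not see any simple replacement; the ``weighted version'' you gesture at is precisely the hard part.

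A smaller but real issue: you translate by an arbitrary $g=-b_0\in -B$. The paper's argument requires $g$ to lie in $-B'$ for a \emph{minimal} balanced subset $B'\subseteq B$; the graph argument needs the distinguished sink $g'=-g$ to sit inside such a $B'$ so that the weight function is well-defined on all of $B$. It may well be that for $\mathbf{Z}/p\mathbf{Z}$ the conclusion holds for every $g\in -B$, but your proof route, if completed along the lines you describe, would need the same careful choice of $g$.
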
 
As an aside, we note that this gives a new proof of the lower bound $b(p)>\log_2 p$ which, as we mentioned before, is best possible up to lower order terms.
\begin{cor}
    If $B\subset \mathbf{Z}/p\mathbf{Z}$ is balanced, then $|B|\geqslant \log_2 p +1$.
\label{balancedp}
\end{cor}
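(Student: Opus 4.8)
The plan is to derive Corollary \ref{balancedp} as an immediate consequence of Proposition \ref{additivebasisp}. Suppose $B \subset \mathbf{Z}/p\mathbf{Z}$ is balanced. By Proposition \ref{additivebasisp}, there is a translate $B' = B+g$ which is an additive basis for $\mathbf{Z}/p\mathbf{Z}$, meaning $\Sigma(B') = \mathbf{Z}/p\mathbf{Z}$. Since translation is a bijection, $|B'| = |B|$, so it suffices to bound $|B'|$ from below.

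The key observation is the trivial upper bound on the size of an additive span: for any proper set $S \subset G$ one has $|\Sigma(S)| \leqslant 2^{|S|}$, since $\Sigma(S)$ is by definition the image of the map sending a subset $S' \subseteq S$ to its sum $\sum_{s\in S'} s$, and there are exactly $2^{|S|}$ subsets of $S$. Applying this with $S = B'$ and using $\Sigma(B') = \mathbf{Z}/p\mathbf{Z}$ gives
\[
p = |\Sigma(B')| \leqslant 2^{|B'|} = 2^{|B|}.
\]
Taking logarithms base $2$ yields $|B| \geqslant \log_2 p$. To upgrade this to $|B| \geqslant \log_2 p + 1$, I would note that $\Sigma(B')$ always contains $0$ (the empty sum), and that the subset-sum map is far from injective whenever $|B'|$ is even moderately large — but a cleaner route is to observe that $p$ is an integer and $2^{|B'|}$ is as well, so if $|B|$ were an integer with $2^{|B|} \geqslant p$ we get $|B| \geqslant \lceil \log_2 p \rceil$; since $p$ is an odd prime it is never an exact power of $2$, hence $\log_2 p$ is not an integer and $\lceil \log_2 p \rceil \geqslant \log_2 p + $ (a positive amount). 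Actually the simplest argument giving the extra $+1$: since $0 \in \Sigma(B')$ is realized by the empty subset and also, because $B'$ is balanced (being a translate of a balanced set — though one must check balancedness is translation-invariant, which it is since $2(b+g) = (b_1+g)+(b_2+g)$), the element $0$ has another representation, so the subset-sum map on $B'$ is not injective, giving $|\Sigma(B')| \leqslant 2^{|B'|} - 1$, whence $p \leqslant 2^{|B|} - 1$ and $|B| \geqslant \log_2(p+1) > \log_2 p$, and since $|B|$ is an integer, $|B| \geqslant \lceil \log_2(p+1)\rceil \geqslant \log_2 p + 1$ for $p \geqslant 2$.

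The only step requiring any care — and thus the "main obstacle," though it is a minor one — is justifying that a translate of a balanced set is balanced (immediate from the definition, as noted above) and pinning down the exact constant to land on $\log_2 p + 1$ rather than merely $\log_2 p$; this just needs the non-injectivity of the subset-sum map together with integrality of $|B|$. Everything substantive is already contained in Proposition \ref{additivebasisp}.
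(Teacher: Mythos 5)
Your proof works cleanly up to the bound $|B|\geqslant \log_2 p$, but the attempt to upgrade it to $|B|\geqslant \log_2 p + 1$ has a genuine gap. Your chain of inequalities ends with the claim $\lceil \log_2(p+1)\rceil \geqslant \log_2 p + 1$, which is false for most primes: for instance $p=5$ gives $\lceil \log_2 6\rceil = 3$, while $\log_2 5 + 1 \approx 3.32$; similarly $p=7$ gives $\lceil \log_2 8\rceil = 3 < \log_2 7 + 1 \approx 3.81$. Losing a single element from the image of the subset-sum map only buys you $p\leqslant 2^{|B|}-1$, and integrality of $|B|$ cannot stretch $\log_2(p+1)$ into $\log_2 p + 1$. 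The earlier branch of your argument (odd $p$ is not a power of $2$, so $\log_2 p$ is non-integral) has the same defect: it only yields $|B|\geqslant \lceil\log_2 p\rceil$, which is strictly weaker than $\log_2 p + 1$ whenever $p$ is not one more than a power of two.

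The missing ingredient, which the paper uses, is that Proposition~\ref{additivebasisp} gives $g\in -B$, so that $0 \in B' = B+g$. Once $0\in B'$, the subset-sum map is not merely non-injective at one value: \emph{every} subset $S'\subseteq B'$ and its symmetric difference $S'\triangle\{0\}$ have the same sum, so the $2^{|B'|}$ subsets pair off into $2^{|B'|-1}$ pairs with equal sums. This gives the multiplicative saving $|\Sigma(B')|\leqslant 2^{|B'|-1}$, and then $p\leqslant |\Sigma(B')|\leqslant 2^{|B|-1}$ gives $|B|\geqslant \log_2 p + 1$ directly, with no appeal to integrality. Note also that your justification for $0$ having a second representation (``because $B'$ is balanced'') is not right on its own terms: balancedness does not obviously produce a nonempty subset of $B'$ summing to $0$; the correct and simpler reason is just that $0\in B'$, which is exactly what the clause $g\in -B$ in Proposition~\ref{additivebasisp} is there to ensure.
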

\begin{proof}
Let $B\subset \mathbf{Z}/p\mathbf{Z}$ be a balanced set, then there is some translate $B+g$ of $B$ so that $\Sigma(B+g) = \mathbf{Z}/p\mathbf{Z}$ and $g\in-B$ by Proposition \ref{additivebasisp}. As $0\in B+g$, we deduce that $\left|\Sigma(B+g)\right|\leqslant2^{|B|-1}$ and combining this with $\left|\Sigma(B+g)\right|\geqslant p$ yields the result.
\end{proof}

The situation in a general Abelian group $G$ is a bit more delicate. If $p$ is a prime dividing the order of a group $G$, then $G$ contains a cyclic group of order $p$ as a subgroup. Hence,
$$b(G)\leqslant \min_{\text{$p$ prime, }p| |G|} b(p).$$
However, one might hope that if $B\subset G$ is balanced and $B$ generates a large subgroup of $G$, then one can obtain an improved lower bound on $|B|$. This is not true in general as the property of being balanced is preserved under translation, so one could take a balanced subset of $G$ of size $b(p(G))$ and then take $B$ to be a translate of this set which generates a large subgroup of $G$. This is the reason for introducing the following definition. Here, for $C\subset G$ we use the standard notation $\langle C\rangle$ for the subgroup of $G$ generated by $C$.
\begin{definition}
\normalfont Let $C\subset G$, then we define $\text{minspan}(C)\vcentcolon= \min_{g\in G} |\langle C+g\rangle|$.
\end{definition}
Further, one can see that the union of any two balanced sets is balanced. This again could lead to small balanced sets in $G$ for which any translate generates a large subgroup of $G$. To avoid this, we work with irreducible balanced sets and in doing so, we obtain the following generalisation of Proposition \ref{additivebasisp}, and it is the only result from this section that we need for the proof of Theorem \ref{generallowerboundthm}.
\begin{proposition}
    Let $G$ be a finite Abelian group and let $B\subset G$ be an irreducible balanced set. Then there exists $g\in -B$ 
 such that $\Sigma(B+g)=\langle B+g\rangle$, i.e. the translated set $B+g$ is an additive basis for $\langle B+g\rangle$.
\label{additivebasis}
\end{proposition}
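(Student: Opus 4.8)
The plan is to reduce to the cyclic case, Proposition \ref{additivebasisp}, by induction on $|G|$. First a preliminary simplification: for any $b\in B$ we have $0\in B-b$, so $\langle B-b\rangle=\langle B-B\rangle$, which does not depend on $b$; call this subgroup $H$. For $g\in -B$ we then have $\Sigma(B+g)\subseteq\langle B+g\rangle=H$, so the assertion of the proposition is exactly that $\Sigma(B+g)=H$ for some $g\in -B$. Replacing $B$ by the translate $B-b_0$ (still irreducible balanced, since being balanced is translation-invariant and translation plainly preserves disjoint decompositions) and $G$ by $H$, we may assume from now on that $0\in B$, that $\langle B+g\rangle=G$ for every $g\in -B$, and we must produce $g\in -B$ with $\Sigma(B+g)=G$.

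Next I would induct on $|G|$. If $|G|$ is prime then $G\cong\mathbf{Z}/p\mathbf{Z}$ and this is Proposition \ref{additivebasisp} (which needs only that $B$ is balanced). Suppose $|G|$ is composite and, for contradiction, that $\Sigma(B+g)\subsetneq G$ for all $g\in -B$. Fix such a $g$, put $C=B+g$ (so $0\in C$, $\langle C\rangle=G$, $C$ balanced), let $\Sigma=\Sigma(C)$, and let $K=\{h\in G:h+\Sigma=\Sigma\}$ be the stabiliser of $\Sigma$, a proper subgroup since $0\in\Sigma\neq G$. Writing $\Sigma=\sum_{c\in C\setminus\{0\}}\{0,c\}$, Kneser's theorem shows that $\Sigma$ is a union of $K$-cosets covering all but boundedly many of them, so when $K$ is nontrivial the strictly smaller quotient $\bar G=G/K$ carries the image $\bar C$ of $C$, which still generates $\bar G$ and has proper additive span $\Sigma(\bar C)=\Sigma/K$ with trivial stabiliser. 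If $\bar C$ is balanced, I would pass to an irreducible balanced subset of it, apply the inductive hypothesis inside the subgroup of $\bar G$ it generates, and attempt to lift the resulting full-span translate back to a full-span translate of $B$ in $G$. If $\bar C$ is not balanced, then some $c\in C$ has the property that every representation $2c=c_1+c_2$ with $c_1\neq c_2$ in $C$ collapses modulo $K$, i.e. $c_1-c_2\in K\setminus\{0\}$; I would combine this rigidity with the irreducibility of $B$ to exhibit two disjoint balanced subsets of $B$, a contradiction. The heuristic behind the irreducibility hypothesis is that a balanced set assembled from independent balanced pieces lying in complementary subgroups would have to be translated differently within each piece for all of its subset sums to appear, so no single translate could work — and this is precisely the obstruction irreducibility is meant to exclude. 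The remaining case, in which $K$ is trivial for every $g\in -B$, cannot be handled by shrinking the group at all; there one must argue directly from the balanced structure, essentially re-running the cyclic argument inside the prime subquotients of $G$.

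The main obstacle, and where I expect essentially all the difficulty to lie, is that the additive span operation $Z\mapsto\Sigma(Z)$ is badly behaved under exactly the operations the induction wants to use: it is not equivariant under translation ($\Sigma(Z+t)$ is not a translate of $\Sigma(Z)$), and the property of being balanced does not descend to quotients. Thus both the lifting step (transferring the conclusion from $G/K$ back to $G$) and the splitting argument (extracting two disjoint balanced subsets of $B$ from the failure of balancedness in the quotient) are delicate, as is the trivial-stabiliser case where the group cannot be reduced at all.
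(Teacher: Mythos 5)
Your proposal takes a genuinely different route from the paper — induction on $|G|$ organized around the stabilizer of $\Sigma(B+g)$ and a Kneser-type reduction — but as it stands it has concrete gaps that prevent it from being a proof. The most immediate problem is the base case: in the paper Proposition \ref{additivebasisp} is \emph{deduced from} Proposition \ref{additivebasis}, not proved independently. Using \ref{additivebasisp} as the base of an induction whose conclusion is \ref{additivebasis} inverts the logical dependency, so you would need to supply a self-contained proof of the cyclic prime case before the induction can even start; none is given here or in the paper.

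More seriously, the trivial-stabilizer case — where $K=\{h:h+\Sigma(B+g)=\Sigma(B+g)\}$ is $\{0\}$ for every $g\in -B$ — is exactly where the difficulty lies, and your sketch says only that ``one must argue directly from the balanced structure.'' This can happen in non-prime groups, the quotienting step then gives no reduction in $|G|$, and no alternative argument is offered. Likewise, the two auxiliary steps you flag as delicate (lifting a full-span translate from $G/K$ back to $G$, and extracting two disjoint balanced subsets of $B$ when $\bar{C}$ fails to be balanced) are not carried out, and it is far from clear either can be made to work: having $\Sigma(\bar{C}+\bar{g}')$ fill $\langle \bar{C}+\bar{g}'\rangle$ says nothing about which elements are hit \emph{within} each $K$-coset, which is essentially the original problem on $K$. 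For contrast, the paper's proof is direct and avoids all of this: it builds a directed graph $H$ on $B$ whose edges come from balanced relations $2b=b_1+b_2$, uses irreducibility (via a minimal balanced subset $B'$) to show every vertex has a directed path to a fixed $g'\in B'$, and then takes $g=-g'$ and drives a greedy weight-compression with $w(b)=2^{-s(b)}$ to put any $y\in\langle B+g\rangle$ into $\Sigma(B+g)$. That argument needs no induction, no Kneser, and no passage to quotients; in particular it is not sensitive to whether stabilizers are trivial.
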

\begin{rmk*}
There do in fact exist non-irreducible balanced sets $B\subset G$ for which no translate $B+g$ is an additive basis for $\langle B+g\rangle$. As an example one can consider \begin{equation}
B=\mathbf{Z}/3\mathbf{Z}\times\{0,1\}\subset \mathbf{Z}/3\mathbf{Z}\times \mathbf{Z}/p\mathbf{Z}.
\label{balancedexample}
\end{equation} Then $B$ is balanced, but any translate contains an element of order $3p$, so $\langle B+g\rangle$ has size at least $3p$ and cannot have an additive basis of size $|B|=6$ for $p$ large. Note that in this example, $B$ is not irreducible as it is the disjoint union of two balanced sets of size $3$.
\end{rmk*}
We deduce the following corollary, giving an improved lower bound for sets with large $\text{\normalfont \text{minspan}}$.
\begin{cor}
    Let $G$ be an Abelian group. If $B\subset G$ is an irreducible balanced set, then $|B|\geqslant \log_2(\text{\normalfont \text{minspan}}(B))+1$.
\label{additivebasiscor}
\end{cor}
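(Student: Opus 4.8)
The plan is to mimic exactly the deduction of Corollary \ref{balancedp} from Proposition \ref{additivebasisp}, now using the more general Proposition \ref{additivebasis}. Let $B\subset G$ be an irreducible balanced set. First I would apply Proposition \ref{additivebasis} to obtain an element $g\in -B$ with the property that $\Sigma(B+g)=\langle B+g\rangle$; that is, the translate $B+g$ is an additive basis for the subgroup it generates.

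The next step is to bound $\left|\Sigma(B+g)\right|$ from above. Since $g\in -B$, the translated set $B+g$ contains $0$. Adding $0$ to a subset sum does not change it, so every element of $\Sigma(B+g)$ is already a subset sum of $(B+g)\setminus\{0\}$, a set of size $|B|-1$. Hence $\left|\Sigma(B+g)\right|\leqslant 2^{|B|-1}$.

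Finally I would combine this with the lower bound coming from the basis property. By the choice of $g$ we have $\left|\Sigma(B+g)\right|=\left|\langle B+g\rangle\right|$, and by the very definition $\text{\normalfont minspan}(B)=\min_{h\in G}\left|\langle B+h\rangle\right|\leqslant\left|\langle B+g\rangle\right|$. Chaining these inequalities gives $\text{\normalfont minspan}(B)\leqslant\left|\langle B+g\rangle\right|=\left|\Sigma(B+g)\right|\leqslant 2^{|B|-1}$, and taking $\log_2$ of both sides yields $|B|\geqslant \log_2(\text{\normalfont minspan}(B))+1$, as required.

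There is essentially no obstacle here once Proposition \ref{additivebasis} is in hand: the entire content has been pushed into that proposition. The only points that require a moment's care are remembering that $g\in -B$ forces $0\in B+g$ (this is what produces the saving from $2^{|B|}$ to $2^{|B|-1}$, hence the $+1$ in the statement), and correctly invoking the definition of $\text{\normalfont minspan}(B)$ as a minimum over \emph{all} translates, so that it is bounded above by $\left|\langle B+g\rangle\right|$ for the specific $g$ produced by Proposition \ref{additivebasis}.
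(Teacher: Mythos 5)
Your proof is correct and matches the paper's own deduction step for step: apply Proposition \ref{additivebasis} to get a translate $B+g$ with $0\in B+g$ that is an additive basis for $\langle B+g\rangle$, bound $|\Sigma(B+g)|\leqslant 2^{|B|-1}$ using $0\in B+g$, and compare with $\text{minspan}(B)\leqslant |\langle B+g\rangle|=|\Sigma(B+g)|$. Nothing is missing.
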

\begin{proof}
By Proposition \ref{additivebasis}, there is some translate $B+g$ of $B$ which contains $0$ and is an additive basis of $\langle B+g\rangle$. As $0\in B+g$, we deduce that $\left|\Sigma(B+g)\right|\leqslant 2^{|B+g|-1}=2^{|B|-1}$. As $B+g$ is an additive basis of $\langle B+g\rangle$, we also get that $\left|\Sigma(B+g)\right|\geqslant \left|\langle B+g\rangle\right|\geqslant \text{minspan}(B)$. Combining these two inequalities gives the result.
\end{proof}
If $B$ is a balanced set which is not irreducible, then this result breaks down. In fact, one cannot obtain any lower bound growing with $\text{\normalfont \text{minspan}}(B)$ without the assumption that $B$ is irreducible as the example defined in \eqref{balancedexample} shows. Hence, the following result is best possible for a general balanced set (up to lower order terms).
\begin{cor}
    If $B\subset G$ is balanced, then $|B|\geqslant \log_2 p(G) +1$.
\label{balancedG}
\end{cor}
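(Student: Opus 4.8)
The plan is to reduce to the irreducible case and then apply Corollary \ref{additivebasiscor}. Let $B\subset G$ be balanced. The first step is to observe that $B$ contains an irreducible balanced subset $B_0\subseteq B$: if $B$ itself is not irreducible it splits into two disjoint balanced subsets, each strictly smaller, and we may iterate this process; since $|B|$ is finite it terminates at a balanced set that cannot be split further, i.e. an irreducible balanced set $B_0$. Clearly $|B_0|\leqslant|B|$, so it suffices to prove the bound for $B_0$.

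The second step is to bound $\text{minspan}(B_0)$ from below by $p(G)$. For any $g\in G$, the subgroup $\langle B_0+g\rangle$ is nontrivial, since $B_0$ is balanced and hence has at least $3$ elements, so $B_0+g$ contains two distinct elements and their difference is a nonzero element of $\langle B_0+g\rangle$. A nontrivial subgroup of $G$ has order divisible by some prime dividing $|G|$, hence of order at least $p(G)$. Taking the minimum over $g$ gives $\text{minspan}(B_0)\geqslant p(G)$.

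The third step is simply to invoke Corollary \ref{additivebasiscor} applied to the irreducible balanced set $B_0$, which yields $|B_0|\geqslant \log_2(\text{minspan}(B_0))+1\geqslant \log_2 p(G)+1$, and combining with $|B|\geqslant|B_0|$ finishes the proof.

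I do not anticipate any serious obstacle here; the content is entirely in Proposition \ref{additivebasis} and its Corollary \ref{additivebasiscor}, and this corollary is just the bookkeeping needed to pass from a general balanced set to an irreducible one. The only mild point to be careful about is the termination of the reduction in the first step and the trivial observation that a balanced set has size at least $3$ (indeed at least $\log_2 p(G)+1$ once the argument is complete, but for the subgroup-order argument we only need nonemptiness of $B_0+g$ beyond one element, which follows from $|B_0|\geqslant 3$, or even just $|B_0|\geqslant 2$).
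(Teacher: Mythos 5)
Your proof is correct and takes essentially the same route as the paper: extract an irreducible balanced subset $B_0\subset B$, note that every translate of $B_0$ generates a nontrivial subgroup so $\text{minspan}(B_0)\geqslant p(G)$ by Lagrange, and then apply Corollary \ref{additivebasiscor}. The only cosmetic difference is that the paper argues the subgroup is nontrivial because a translate of a $\geqslant 2$-element set contains a nonzero element, whereas you take the difference of two distinct elements of the translate; both observations are equivalent and both suffice.
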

\begin{proof}
Let $B\subset G$ be a balanced set, then it contains an irreducible balanced subset $B'\subset B$. Any balanced set clearly has at least two distinct elements, so any translate $B'+g$ contains a non-zero element of $G$. Hence, we see that $\langle B'+g\rangle$ is a non-zero subgroup of $G$ so has size at least $p(G)$ by Lagrange's theorem. So $\text{minspan}(B') \geqslant p(G)$ and using the result from Corollary \ref{additivebasiscor} gives the required lower bound $|B|\geqslant |B'|\geqslant \log_2p(G)+1$.
\end{proof}

\begin{rmk*}
The main purpose of this section is to prove the auxiliary result Proposition \ref{additivebasis} for the proof of Theorem \ref{generallowerboundthm}, but we stated some of its corollaries which are interesting in their own right as they yield a new strongest lower bound on the size of a balanced set in a general Abelian group:
$$|B|\geqslant \min\big(\log_2\text{\normalfont \text{minspan}}(B), \,2\log_2p(G)+1\big)+1.$$ This follows by applying Corollary \ref{additivebasiscor} if $B$ is irreducible, and applying Corollary \ref{balancedG} to the two disjoint balanced sets contained in $B$ that exist if $B$ is not irreducible.
\end{rmk*}
Let us now give the proof of Propositions \ref{additivebasisp} and \ref{additivebasis}. First we give the simple deduction of Proposition \ref{additivebasisp} from Proposition \ref{additivebasis}.
\begin{proof}[Proof of Proposition \ref{additivebasisp} assuming Proposition \ref{additivebasis}]
Let $p$ be prime and $B\subset \mathbf{Z}/p\mathbf{Z}$ be a balanced set. Note that $B$ contains an irreducible balanced subset $B'\subset B$. By Proposition \ref{additivebasis}, as $B'$ is an irreducible balanced set, there exists $g\in -B'\subset -B$ so that $B'+g$ is an additive basis for $\langle B'+g\rangle$. Any balanced set clearly has at least two distinct elements, so $B'+g$ contains a non-zero element of $\mathbf{Z}/p\mathbf{Z}$ so that $\langle B'+g\rangle = \mathbf{Z}/p\mathbf{Z}$. Hence,  $$\Sigma(B+g)\supset \Sigma(B'+g)\supset \mathbf{Z}/p\mathbf{Z}.$$
\end{proof}
\begin{proof}[Proof of Proposition \ref{additivebasis}.]
Let $G$ be a finite Abelian group and let $B\subset G$ be an irreducible balanced subset. We will show that there exists an element $g\in -B$ for which the translated set $B+g$ is an additive basis for the subgroup $\langle B+g\rangle\leqslant G$. We will pick such a $g\in -B$ later and then consider the translated set $B+g$. Clearly, $B+g$ is still an irreducible balanced set since this property is preserved under translation. Now let us pick any element $y\in \langle B+g\rangle$ and as the ambient group $G$ is finite, we can find non-negative integers $n_{b}(y)$ for $b\in B$ so that $y=\sum_{b\in B} n_{b}(y)(b+g)$. If it is the case that each such $n_{b}(y)$ is either $0$ or $1$, then we immediately deduce the desired conclusion that $y$ lies in the additive span $\Sigma(B+g)$ of $B+g$. If not, there is some $b\in B$ with $n_{b}(y)\geqslant 2$ and we will then use the relation $2b=b_1+b_2$ to decrease $n_{b}(y)$. By applying such `compressions' in a certain order, we obtain a way to write $y$ as a sum of the form $\sum_{b\in B} m_{b}(b+g)$ with $m_{b}\in\{0,1\}$ so that $y\in \Sigma(B+g)$ as desired. For this, we will use ´weight-compressions' which, for an expression $y=\sum_{b\in B} n_b (b+g)$ with non-negative integers $n_b$ that is not already maximally compressed, yield a new expression $y=\sum_b m_b (b+g)$ with larger weight. To define a weight function on the finite set $B$, it is convenient to use the language of graph theory.

\bigskip

Note that $B$ contains a balanced subset $B'$ which is minimal in the sense that no proper subset of $B'$ is balanced.\footnote{Recall that $B$ is irreducible, meaning that $B$ does not contain two disjoint balanced subsets. In general, an irreducible balanced set $B$ can still contain a proper subset $B'$ which is also balanced.} Let $H$ be a directed graph with vertex set $B$ and for each vertex $b\in B$ we have two outgoing edges $b\to b_1$ and $b\to b_2$ where $b_1,b_2\in B$ are distinct with $2b=b_1+b_2$. As $B$ is balanced, we can always find such $b_1,b_2$. If there is more than one choice of $b_1,b_2$ then we just pick one of them arbitrarily, except when $b\in B'$ in which case we always choose $b_1,b_2\in B'$. So every vertex in $H$ has outdegree exactly $2$ and every vertex in the induced subgraph $H[B']$ also has outdegree $2$. We need the following lemma.
\begin{lemma}
    Let $H$ be the directed graph defined above. Then for any vertex $g'\in B'$ and any vertex $b\in V(H)=B$, there is a directed path from $b$ to $g'$ in $H$.
\label{connectedlemma}
\end{lemma}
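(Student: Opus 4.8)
The plan is to prove Lemma \ref{connectedlemma} by analysing the structure of the directed graph $H$. First I would record the basic combinatorial fact that in any finite directed graph where every vertex has outdegree at least $1$, every vertex can reach a directed cycle: starting from $b$ and following outgoing edges, one must eventually revisit a vertex, and the portion of the walk between the two visits is a directed cycle. So it suffices to understand the cycles of $H$ and, more precisely, to show that from every vertex one can reach the vertex $g'\in B'$ specifically.

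The key observation I would exploit is a ``translation-invariance'' of the cycle structure coming from the defining relation $2b = b_1 + b_2$. If $b \to c$ is an edge, then $c$ is one of the two points of a nontrivial $3$-AP centred at $b$, so $c = b + t$ where the other neighbour is $b - t$ for some $t \neq 0$. Thus along any directed walk $b = v_0 \to v_1 \to \cdots \to v_k$ we have $v_k = b + (t_1 + \cdots + t_k)$ for nonzero increments $t_i$. Now suppose $C$ is a directed cycle with vertex set $\{c_1,\dots,c_\ell\}$: summing the increments around the cycle gives $\sum t_i = 0$, and I claim the vertex set of $C$ is then a balanced subset of $B$. Indeed, each $c_i$ on the cycle has both of its $H$-outneighbours on the cycle? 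Not quite — each vertex has outdegree $2$ but only one of its two outgoing edges need lie on a given cycle. The right statement to aim for is: the set of vertices lying on \emph{some} directed cycle, together with the structure of $H$, forces a balanced subset, and by irreducibility of $B$ there can be essentially only ``one'' such balanced piece, which must be $B'$ (or at least something every vertex can funnel into). Concretely, I would argue that the set $R$ of vertices from which $g'$ is \emph{not} reachable is a union of ``downward-closed'' components under the edge relation restricted away from $g'$; if $R$ were nonempty, then since every vertex of $R$ still has outdegree $2$ \emph{within} $R$ (any edge leaving $R$ would be an edge reaching $g'$, reaching $g'$), the induced subgraph $H[R]$ has minimum outdegree $2$, hence contains a directed cycle, hence — by the increment-sum argument and the fact that a cycle in $H$ on which every vertex has both out-edges is exactly a balanced set — yields a balanced subset $B'' \subseteq R$. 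Similarly, since $g' \in B'$ and $H[B']$ has all outdegrees $2$, the same reasoning produces a balanced subset inside $B' \setminus$ (anything that escapes), and $B'$ is disjoint from $R$, contradicting that $B$ is irreducible. That contradiction shows $R = \emptyset$, i.e. $g'$ is reachable from every vertex, which is the lemma.

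The step I expect to be the main obstacle is making precise the bridge ``directed cycle in $H$ $\Rightarrow$ balanced subset of $B$''. The subtlety is exactly that a vertex on a cycle uses only one of its two outgoing edges there, so a bare cycle need not be balanced. I would handle this by being more careful about the definition of the auxiliary graph and about which vertices we track: for vertices in $B'$ we \emph{forced} both out-edges to stay inside $B'$, so the induced subgraph $H[B']$ has \emph{both} out-edges of every vertex inside it; then a minimal balanced subset $B'$ has the property that the ``reachable from $b$ within $B'$'' set is all of $B'$ (else a proper subset of $B'$ would be closed under both out-edges, and one checks such a closed set is itself balanced, contradicting minimality). This gives Lemma \ref{connectedlemma} for $b \in B'$. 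For a general $b \in B = V(H)$, follow out-edges from $b$ until hitting $B'$ — one must, because otherwise the vertices visited stay in $B \setminus B'$, have outdegree $2$ there after possibly rerouting, produce a cycle, hence a balanced set disjoint from $B'$, contradicting irreducibility — and then concatenate with the path inside $B'$ from the entry point to $g'$. So the real content is the two closure-and-cycle arguments: (i) a set closed under both out-edges is balanced, and (ii) minimality of $B'$ plus irreducibility of $B$ preclude any ``escape-free'' region avoiding $g'$. Once those are set up cleanly the path is immediate by concatenation.
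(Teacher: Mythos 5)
Your final argument is essentially the paper's. Stripped of the cycle detour, the working content is: (i) a subset of $B$ closed under both out-edges of $H$ is automatically balanced (immediate from the definition of the edges), and (ii) a forward-reachability set $R(b)$ is closed under both out-edges. These two facts give that $R(b')=B'$ for $b'\in B'$ by minimality of $B'$, and then for general $b$ that $R(b)$ must meet $B'$ by irreducibility. The paper argues exactly in these terms with the forward-reachability set $R(h)$; you phrase the general case contrapositively by looking at the set of vertices that cannot reach $g'$, but that set is again closed under out-edges, so it is the same observation in complement form.

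The one thing worth flagging is that the whole first half of your sketch — ``every vertex reaches a directed cycle,'' the increment-sum $\sum t_i = 0$ around a cycle, and the worry about whether a cycle is balanced — is a dead end, and you correctly diagnose why: a vertex on a directed cycle only uses one of its two out-edges there, so the vertex set of a cycle has no reason to be balanced. None of that machinery is needed. The fix you land on (work with sets closed under \emph{both} out-edges, i.e.\ reachability sets, rather than cycles) is precisely the right one, and once you make that switch the proof is the paper's. I would drop the cycle discussion entirely; it is not a lemma you can salvage, and keeping it in obscures that the real proof is three lines: $R(h)$ is balanced, $R(b')=B'$ by minimality, and $R(b)\cap B'\neq\emptyset$ by irreducibility.
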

\begin{proof}
    Define for a vertex $h\in V(H)=B$ the set $R(h)$ to be the set of vertices $h_1$ in $H$ for which there exists a directed path (possibly consisting of a single vertex) from $h$ to $h_1$ in $H$. We show that $R(h)\subset B$ is itself a balanced set for all $h\in B$. Consider an element $x\in R(h)$ so there exists a directed path $P$ in $H$ going from $h$ to $x$. As $B$ is balanced, we can find distinct $x_1,x_2$ in $B$ with $2x=x_1+x_2$ and so that $x\to x_1$ and $x\to x_2$ are edges of $H$. But then concatenating $P$ with each of these edges gives directed paths from $h$ to $x_1$ and from $h$ to $x_2$. Hence, $x_1,x_2\in R(h)$ and we conclude that $R(h)$ is indeed a balanced set itself. This shows that for each $b'\in B'$, the set $R(b')\subset B'$  is a balanced subset of $B'$ so that as $B'$ was assumed to be a minimal balanced set, we get that $R(b')=B'$. Thus, we have shown that for any two vertices $b_1',b_2'\in B'$, there is a directed path from $b_1'$ to $b_2'$ in $H$. Finally, for any $b\in B$, the two sets $B'$ and $R(b)$ are balanced subsets of $B$. As $B$ is irreducible, this means that $B'$ and $R(b)$ intersect so there is a directed path from $b$ to a vertex in $B'$. We have shown that any two vertices in $B'$ are connected by a directed path so that $B'\subset R(b)$ as desired.
\end{proof}
We continue with the proof of Proposition \ref{additivebasis}. Pick any $g\in -B'$ and we will show that the translated set $B+g$ has the desired properties, meaning that $\Sigma(B+g)=\langle B+g\rangle$. Let $g'=-g$ so $g'\in B'$ and we are ready to define our weight function. For each vertex $b\in B$, let the number $s(b)$ denote the length of the shortest directed path from $b$ to $g'$ in $H$, so $s(g')=0$ for example. By Lemma \ref{connectedlemma}, $s(b)$ is finite for every $b$. We now define a weight function $w:B\to \mathbf{R}$ on $B$ by $w(b)\vcentcolon = 2^{-s(b)}$ for each $b\in B$. Let $y\in\langle B+g\rangle$ so we can write $y = \sum_{b\in B} n_{b}(y)(b+g)$ for some non-negative integers $n_{b}(y)$. Let $N_y = \sum_{b\in B} n_{b}(y)\in\mathbf{N}$ and consider the following set of $|B|$-tuples of non-negative integers:
$$S_{B,g}(y)\vcentcolon = \left\{(m_b)_{b\in B}\in \mathbf{N}^{B}: \sum_{b\in B} m_b(b+g) = y\text{ and } \sum_{b\in B} m_b=N_y\right\}.$$
For each $|B|$-tuple $(m_b)_{b\in B}$ in $S = S_{B,g}(y)$, we define its weight $$w\left((m_b)_{b\in B}\right)\vcentcolon= \sum_{b\in B} m_bw(b)\in [0,\infty).$$ Now $S$ contains the tuple $(n_{b}(y))_{b\in B}$ so it is certainly non-empty. Further, the number of $|B|-$tuples $(m_b)_{b\in B}\in \mathbf{N}^{B}$ with $\sum_b m_b= N_y$ is finite, so $S$ is a finite set. The idea is now to consider the tuple $(k_b)_{b\in B}$ in $S$ with maximal weight and we show that this forces each $k_b$ with $b\neq g'$ to be either $0$ or $1$. So let $(k_b)_{b\in B}$ be a tuple in $S$ with maximal weight and suppose for a contradiction that there is some $b\in B\setminus{\{g'\}}$ with $k_b\geqslant 2$. Then let $P = b,b_1,\dots,g'$ be a shortest path from $b$ to $g'$ in $H$, of length $s(b)\geqslant 1$. By definition of the edges in $H$ this means that there exists $b_2\in V(H)=B$ so that $2b=b_1+b_2$. Now let $(k'_c)_{c\in B}$ be a new tuple of non-negative integers defined by $k'_c = k_c$ for all $c\in B\setminus{\{b,b_1,b_2\}}$, $k'_b=k_b-2\geqslant 0$, and $k'_{b_i}=k_{b_i}+1$ for $i=1,2$. We show that $(k'_c)_{c\in B}\in S$. First note that $\sum_c k'_c = \sum_c k_c= N_y$. As $2b=b_1+b_2$, we also have that 
\begin{align*}
y &= y+(b_1+g)+(b_2+g)-2(b+g)\\
&=\Big(\sum_c k_c (c+g)\Big) +(b_1+g)+(b_2+g)-2(b+g)\\
&= \sum_c k'_c (c+g).
\end{align*}
So $(k'_c)\in S$, but we show that its weight is strictly larger than the weight of $(k_c)$ giving the required contradiction:
\begin{align*}
    w\left((k'_c)\right) &= \sum_c k'_c w(c) \\
    &= (k_b-2)w(b)+(k_{b_1}+1)w(b_1)+(k_{b_2}+1)w(b_2) +\sum_{c\in B\setminus{\{c,b_1,b_2\}}} k_cw(c) \\ 
    &= w\left((k_c)\right)-2w(b)+w(b_1)+w(b_2)\\
    &= w\left((k_c)\right)-2\cdot2^{-s(b)}+2^{-s(b_1)}+2^{-s(b_2)}\\
    &\geqslant w\left((k_c)\right)-2^{-s(b)+1}+2^{-s(b)+1}+2^{-s(b_2)}\\
    &>w\left((k_c)\right),
\end{align*}
where we used that $s(b_1)\leqslant s(b)-1$ as $b_1$ comes after $b$ in the shortest path $P$ from $b$ to $g'$, and that $w(b_2)=2^{-s(b_2)}>0$. 
\bigskip

We conclude that if $(k_b)_{b\in B}$ is an element of $S$ with largest weight, then $k_b \in \{0,1\}$ for all $b\in B\setminus{\{g'\}}$. Hence, the following equality shows that $y\in \Sigma(B+g)$ as desired:
\begin{align*}
    y &= \sum_{b \in B} k_b (b+g)\\
    &= \sum_{b\in B\setminus{\{g'\}}} k_b b + k_{g'}(g'+g)\\
    &= \sum_{b\in B\setminus{\{g'\}}} k_b (b+g) \in \Sigma(B+g),
\end{align*}
as we chose $g=-g'$. Since $y\in \langle B+g\rangle$ was arbitrary, we have shown that $\langle B+g\rangle=\Sigma(B+g)$.
\end{proof}
\begin{rmk*}
    Note that in the proof, the compressions can be used repeatedly on the original sum $y=\sum_b n_b(b+g)$ until we obtain a sum $y=\sum_bk_b(b+g)$ for which almost all the weight is placed at the vertex $g'\in V(H)=B$. Since this element contributes $k_{g'}(g'+g)= 0\in G$ to the sum, it does not matter that $k_{g'}$ is generally not in $\{0,1\}$. This shows the advantage of working with a translate $B+g$ instead of $B$, and in fact this is necessary as $B$ does not need to be an additive basis for $\langle B\rangle$.
\end{rmk*}
\begin{rmk*}
    Consider an arithmetic progression $Q=\{a,2a,\dots ka\}$of length $k$ in $G=\mathbf{Z}/p\mathbf{Z}$. Then every element of $Q$ except $a$ and $ka$ is the midpoint of a non-trivial 3-term arithmetic progression in $Q$. Taking $k=200$ for example, one can get that that $Q$ is `almost' balanced, in the sense that 99\% of the elements $b\in Q$ have that $2b=b_1+b_2$ for distinct $b_1,b_2\in Q$. However, $Q$ is very far from being balanced in that one needs to add at least $\log_2p - 200$ more elements to make it balanced. This observation shows that any proof of a logarithmic lower bound must have an algebraic flavour in the sense that one must crucially use that every single $b$ satisfies a balanced relation, as opposed to almost every $b$.
\end{rmk*}
We finish this section on balanced sets by using them to construct small sets in $\mathbf{Z}/p\mathbf{Z}$ having no unique sum, thus proving our new upper bound on $m(G)$ from Theorem \ref{upperboundthm}. In order to construct a set $A\subset G$ having no unique sum, it is natural to try using sets with a gridlike structure. Indeed, let $C,D$ be any subsets of the finite Abelian groups $G$ and $G'$ respectively. Then consider the Cartesian product $C\times D\subset G\times G'$ and let $(c_1,d_1)+(c_2,d_2)$ be any sum in its sumset $C\times D+C\times D$. Then we have that \begin{equation}
    (c_1,d_1)+(c_2,d_2) = (c_1,d_2)+(c_2,d_1).
    \label{gridsum}
\end{equation} This trivial observation shows that such a sum can only be unique if $c_1=c_2$ or if $d_1=d_2$. To fix the fact that sums as in \eqref{gridsum} where $c_1=c_2$ or $d_1=d_2$ can be unique in general, we consider the set $A=B\times B\subset (\mathbf{Z}/p\mathbf{Z})^2$ for a balanced set $B\subset \mathbf{Z}/p\mathbf{Z}$. It is easy to check that $A$ has no unique sum. Let $b,b',c,c'\in B$, so the sum $(b,b')+(c,c')$ is not unique if $b\neq c$ and $b'\neq c'$ by \eqref{gridsum}. On the other hand, if $b=c$ then we can write $b+c=2b=b_1+b_2$ for distinct $b_1,b_2\in B$ as $B$ is balanced. Then the equality $(b,b')+(c,c')= (b_1,b')+(b_2,c')$ shows that the sum is not unique. Finally, the same argument shows that $(b,b')+(c,c')$ is not unique if $c=c'$. Using this observation and \cite{nedev}, Theorem \ref{upperboundthm} easily follows.

\begin{proof}[Proof of Theorem \ref{upperboundthm}]
By Theorem 1 in \cite{nedev}, one can find a balanced set $B\subset\mathbf{Z}/p\mathbf{Z}$ of size $$|B|\leqslant (1+o(1))\log_2 p.$$ From the paragraph above, $A=B\times B\subset(\mathbf{Z}/p\mathbf{Z})^2$ is a set having no unique sum of size $|A|=|B|^2$. It is clear that if $T$ and $T'$ are Freiman-isomorphic sets, then $T$ has no unique sum if and only if $T'$ has no unique sum. Hence, all that remains is to find a Freiman isomorphism from $A\subset(\mathbf{Z}/p\mathbf{Z})^2$ into $\mathbf{Z}/p\mathbf{Z}$. Let $r\in \mathbf{Z}/p\mathbf{Z}$ and define $\phi_r: A\to \mathbf{Z}/p\mathbf{Z}: (b,b')\mapsto b+rb'$. Then $\phi_r$ is a Freiman homomorphism for all values of $r$, and can only fail to be a Freiman isomorphism if $r\in (2B-2B)/(2B-2B)$. As this set contains at most $|B|^8\leqslant(1+o(1))(\log_2p)^8<p$ elements for $p$ large, the map $\phi_r$ is a Freiman isomorphism for some $r$, thus giving a subset of $\mathbf{Z}/p\mathbf{Z}$ of size $|B|^2=O((\log p)^2)$ with no unique sum. 
\end{proof}
\begin{rmk*}
    One can improve the implied constant by a factor of $2$ by noting that if $B$ is a balanced set in $\mathbf{Z}/p\mathbf{Z}$, then the set $A=B+B\subset\mathbf{Z}/p\mathbf{Z}$ has no unique sum and size at most ${|B|+1 \choose 2}$. We opted to give the proof based on a Freiman isomorphism from $B\times B$ into $\mathbf{Z}/p\mathbf{Z}$ as it makes clear that we are using a certain two-dimensional structure in order to get non-unique sums. In correspondence with Kopparty, the author found out that essentially the same construction for this upper bound also appears in the thesis of Scheinerman \cite{danny}.
\end{rmk*}

\section{Sets with no unique sum have small dimension}
In this final section, we combine all our results to prove a lower bound on the size of a set $A\subset G$ having no unique sum. Our argument begins by applying the inequality in Proposition \ref{addispanineqprop} to $A+g$ and plugging in the lower bound on $\left|\Sigma(A+g)\right|$ from Proposition \ref{additivebasis}. We introduce the following convenient notation.
\begin{definition}
    Let $Z\subset G$, then define the number
    \begin{equation}
        K(Z)\vcentcolon=  \frac{|Z|}{\dim(Z)}.
    \end{equation}
\end{definition}
Then simply rewriting the inequality in Corollary \ref{addispanineqcor} using that $\dim(Z) = \frac{|Z|}{K(Z)}$ gives the following.
\begin{proposition}
     Let $G$ be an Abelian group and let $Z$ be a finite multiset consisting of elements of $G$. Then
    \begin{equation}
    |Z|\geqslant \frac{K(Z)}{2\left(2+\log_2 K(Z)\right)}\cdot \log_2 \left|\Sigma(Z)\right|.
    \label{lowerboundaddispan}
    \end{equation}
\label{lowerboundaddispancor}
    
\end{proposition}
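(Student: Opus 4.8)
The plan is to observe that Proposition~\ref{lowerboundaddispancor} is a direct algebraic consequence of Corollary~\ref{addispanineqcor}, obtained by taking logarithms and substituting the definition of $K(Z)$. No new ideas are needed beyond what was already established in Section~3.

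First I would recall the conclusion of Corollary~\ref{addispanineqcor}, namely
$$
\left|\Sigma(Z)\right|\leqslant 2^{\,2\dim(Z)\cdot\left(2+\log_2 K(Z)\right)},
$$
where I have written $K(Z)=|Z|/\dim(Z)$ inside the exponent. (If $Z$ is empty or $\dim(Z)=0$, then $\Sigma(Z)=\{0\}$ and the asserted inequality holds trivially, so I may assume $\dim(Z)\geqslant 1$ and $|Z|\geqslant 1$, making $K(Z)\geqslant 1$ and all quantities well defined.) Taking $\log_2$ of both sides gives
$$
\log_2\left|\Sigma(Z)\right|\leqslant 2\dim(Z)\bigl(2+\log_2 K(Z)\bigr).
$$

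Next I would substitute $\dim(Z)=|Z|/K(Z)$ into the right-hand side, yielding
$$
\log_2\left|\Sigma(Z)\right|\leqslant \frac{2|Z|}{K(Z)}\bigl(2+\log_2 K(Z)\bigr),
$$
and then rearrange, dividing through by the positive quantity $\tfrac{2}{K(Z)}(2+\log_2 K(Z))$, to obtain exactly
$$
|Z|\geqslant \frac{K(Z)}{2\left(2+\log_2 K(Z)\right)}\cdot\log_2\left|\Sigma(Z)\right|,
$$
which is \eqref{lowerboundaddispan}. There is no real obstacle here: the only point requiring a moment's care is checking the degenerate cases ($Z$ empty, or $\dim(Z)$ or $|\Sigma(Z)|$ equal to $1$) so that the division and the logarithms are legitimate, but in all these cases the inequality is immediate.
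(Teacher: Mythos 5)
Your proposal is correct and follows the same route as the paper, which states that the proposition follows by "simply rewriting the inequality in Corollary~\ref{addispanineqcor} using that $\dim(Z)=|Z|/K(Z)$." Your attention to the degenerate cases is a reasonable extra care, but otherwise this is exactly the paper's argument.
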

This is the form of the inequality that will be useful for our purpose. In the previous section, we have proven Proposition \ref{additivebasis} which showed that if $B\subset G$ is a balanced set, then it contains an irreducible balanced subset $B'$ which has a translate $B'+g$ that forms an additive basis for $\langle B'+g\rangle$. Since $B'+g$ contains a non-zero element in $G$, $\langle B'+g\rangle$ is a non-trivial subgroup of $G$ so that by Lagrange's Theorem we get \begin{equation}
\left|\Sigma(B+g)\right|\geqslant \left|\Sigma(B'+g)\right| \geqslant\left|\langle B'+g\rangle \right|\geqslant p(G),
\end{equation}
where $p(G)$ denotes the smallest prime divisor of $G$. Using this in inequality \eqref{lowerboundaddispan} shows that for any balanced set $B$ in an Abelian group $G$, we have the lower bound
\begin{equation}
    |B|=|B+g|\geqslant \frac{K(B+g)}{2\left(2+\log_2 K(B+g)\right)}\cdot \log_2 p(G).
\label{balanceddissineq}
\end{equation}
At this point we make an interesting observation. As we noted in the previous section, Nedev \cite{nedev} showed that $G$ contains a balanced subset of size at most $(1+o(1))\log_2p(G)$. Now looking at \eqref{balanceddissineq}, if $B$ is a balanced set  of size $C\log_2 p(G)$ then we must have that $K(B+g)=O_C(1)$ is bounded by a constant depending only on $C$. This means precisely that any such balanced set $B$ in $G$ has a translate with large additive dimension $\dim(B+g)=\frac{|B|}{K(B+g)}\gg_C |B|$, i.e. it contains a dense dissociated subset. The goal in this section is to show that the situation is different for sets having no unique sum. We show that if $A\subset G$ has no unique sum, then $A$ does not contain a dense dissociated subset and in fact we have $\dim(A) =o_{p(G)}(1)\cdot|A|$. Equivalently, we prove that $K(A)\to \infty$ as $p(G)\to \infty$, and plugging this into \eqref{balanceddissineq} will then give the desired lower bound
$$|A|\geqslant \omega(p(G))\log_2p(G).$$
\begin{rmk*}
Note that it is not true that $\dim(A)=o(|A|)$ as $|A|\to\infty$ for sets $A$ containing no unique sum. What we do show is that $\dim(A)=o_{p(G)}(1)\cdot |A|$. For an example, we can consider $A_1=B\times \mathbf{Z}/3\mathbf{Z}\subset \mathbf{Z}/p\mathbf{Z}\times \mathbf{Z}/3\mathbf{Z}$ where $B\subset \mathbf{Z}/p\mathbf{Z}$ is a balanced set of size $C\log_2p$ with $\dim(B)\geqslant\frac{|B|}{C'}$ (such $B$ exist by the discussion above). Then $A_1$ is a product of two balanced sets and hence has no unique sum. However, $A_1$ contains $B\times \{0\}$ so $\dim(A_1)\geqslant\dim(B)\geqslant\frac{|B|}{C'}\geqslant \frac{|A_1|}{3C'}$. By taking $p$ large, it follows that there are arbitrarily large sets having no unique sum but which do contain a dense dissociated subset. The issue in this example, of course, is that the smallest prime factor of $\left|\mathbf{Z}/p\mathbf{Z}\times \mathbf{Z}/3\mathbf{Z}\right|$ is bounded.
\end{rmk*}
The following proposition gives a precise statement of the fact that sets with no unique sum have small additive dimension.
\begin{proposition}
    Let $G$ be a finite Abelian group with $p(G)$ being the smallest prime dividing $|G|$. Let $A\subset G$ have no unique sum. Then
    \begin{equation}
        K(A)\geqslant \omega_1(p(G)),
    \label{Kinfty}
    \end{equation}
    for some function $\omega_1:\mathbf{N}\to (0,\infty)$ with $\omega_1(n)$ tending to infinity as $n\to \infty$. Moreover, one can take
    \begin{equation}
        \omega_1(n) \gg \sqrt{\log\log\log n},
    \end{equation}
    where the implied constant is absolute.
\label{disssmallprop}
\end{proposition}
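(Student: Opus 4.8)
The plan is to run a density‑increment argument whose source of contradiction is rectification: a finite set of integers always has a unique sum (and this property passes to any set Freiman‑isomorphic to it). Write $n=|A|$, $d=\dim(A)$ and $K=K(A)=n/d$; we must force $K$ to grow (slowly) with $p(G)$. \textbf{Step 1 (set‑up on a cube).} Let $D\subseteq A$ be a \emph{maximal} dissociated subset, so $|D|=d$ and, by Lemma~\ref{dimlem}, $A$ lies in the cube $\{\sum_{s\in D}\mu_s s:\mu_s\in\{-1,0,1\}\}\subseteq\langle D\rangle=:H$. Every prime dividing $|H|$ divides $|G|$, so $p(H)\geqslant p(G)$ and we may replace $G$ by $H$; moreover $|H|\geqslant|\Sigma(D)|=2^{d}$ since $D$ is dissociated, recording the crude bound $d\leqslant\log_2|H|$. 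Fix the coordinate system $H\cong\mathbf{Z}^{d}/\Lambda$ induced by $D$ (so $e_i\mapsto d_i$); then dissociatedness of $D$ says exactly that the relation lattice $\Lambda\subseteq\mathbf{Z}^{d}$ contains no nonzero vector of $\ell^{\infty}$‑norm $\leqslant 1$.

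\textbf{Step 2 (the rectification obstruction).} Choose a lift $\widetilde a\in\{-1,0,1\}^{d}$ of each $a\in A$ and set $\widetilde A=\{\widetilde a:a\in A\}\subseteq\mathbf{Z}^{d}$. If $\Lambda$ contained \emph{no} nonzero vector of $\ell^{\infty}$‑norm $\leqslant 4$, then distinct elements of $\widetilde A+\widetilde A$ (whose coordinatewise differences lie in $\{-4,\dots,4\}^{d}$) would remain distinct in $H$, so the bijection $\widetilde A\to A$ would be a Freiman isomorphism; since $\widetilde A\subseteq\{-1,0,1\}^{d}$ is Freiman‑isomorphic to a set of integers (e.g.\ via $x\mapsto\sum_j 5^{\,j}x_j$), the set $A$ would be rectifiable and hence would have a unique sum — a contradiction. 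Thus $\Lambda$ must contain a \emph{short} relation, i.e.\ a nonzero vector of $\ell^{\infty}$‑norm in $\{2,3,4\}$, and, running the same argument inside sub‑cubes spanned by subsets of $D$, the set $D$ is forced to be saturated with such short relations.

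\textbf{Step 3 (the increment — the crux).} This is the step I expect to be the main obstacle. Starting from $A_0=A$ in $H_0=H$, I would build a chain $A_0\leadsto A_1\leadsto A_2\leadsto\cdots$ in which $A_i$ is obtained from $A_{i-1}$ by restricting to a large piece and pushing into a subgroup or quotient $H_i$ of $H_{i-1}$ chosen using the short relations from Step~2, arranged so that: (a)~$A_i$ still has no unique sum; (b)~$p(H_i)$ shrinks only in a controlled way, say $\log_2 p(H_i)\geqslant\tfrac12\log_2 p(H_{i-1})$; and (c)~the dissociated density $\dim(A_i)/|A_i|$ strictly increases each step. Since this density cannot exceed $1$ while Step~1 applied inside $H_i$ gives $\dim(A_i)\leqslant\log_2|H_i|$, the chain must terminate — either at a set with a unique sum (impossible) or once the per‑step density gain can no longer be afforded against the budget of admissible shrinkings of $\log_2 p(H_i)$. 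Balancing the two yields a bound of the shape $K(A)\geqslant\omega_1(p(G))$ with $\omega_1(n)\gg\sqrt{\log\log\log n}$, the three logarithms arising from the composition of the rectification threshold ($\log$), the number of times $\log_2 p$ may be halved ($\log\log$), and the small subconstant density increment available at each stage. The delicate points, which I would expect to consume most of the work, are keeping "no unique sum" intact while the restriction loses only a controlled factor in $|A_i|$, and selecting which short relation to quotient by so that (c) genuinely holds — which I suspect requires tracking $d$ and $n$ jointly through the iteration rather than the ratio alone. Once Proposition~\ref{disssmallprop} is established, combining it with Proposition~\ref{additivebasis} (applied to an irreducible balanced subset of $A$, giving $|\Sigma(A+g)|\geqslant p(G)$) and inequality~\eqref{balanceddissineq} produces $|A|\gg\omega_1(p(G))\log_2 p(G)/\log_2\omega_1(p(G))$, which is Theorems~\ref{lowerboundthm} and~\ref{generallowerboundthm}.
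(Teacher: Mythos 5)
Your proposal takes a genuinely different route from the paper, but the crucial step is missing and the plan has a structural difficulty that I do not see how to repair along the lines you describe.

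The paper also runs a density increment, but of a quite different kind. It fixes a maximal dissociated set $D\subset A$ and a set of \emph{translates} $S\ni 0$, and shows (Proposition~\ref{iteratetran}) that as long as $|S|$ is small relative to $\min(\log_2 p(G),(|D|^6/|A|^5)^{1/4})$, one can replace $S$ by $S'$ with $|S'|\leqslant\max(2|S|,|S|^3)$ so that $|(D+S')\cap A|$ gains an amount $\gg |D|^2/|A|$. The set $A$, the group $G$, and the dissociated set $D$ never change during the iteration; only the set of translates grows. The engine behind the increment is precisely the ``no unique sum'' hypothesis: because $D$ is dissociated and $S$ is rectifiable, one can (via Lemma~\ref{simulrect} and a bad-pair removal argument using the Two Families Theorem) produce many pairs $\{d,d'\}$ for which $(d+s_d)+(d'+s_{d'})$ is a unique sum \emph{within} $(D+S)\cap A$; since $A$ itself has no unique sum, each such sum must have a non-trivial representation using an element of $A\setminus(D+S)$, and a counting argument over these representations locates a new translate capturing a fresh chunk of $A$. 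The iteration then runs $O(K(A)^2)$ times before $|S|$ outgrows the allowed bound, and balancing against $|S_j|\leqslant 2^{3^j}$ yields $K(A)\gg\sqrt{\log\log\log p}$.

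Your Steps~1 and~2 are fine as far as they go (and the lattice/rectification viewpoint is a reasonable reformulation of Lemma~\ref{rectificationG}), but Step~3 is where the proof actually has to happen, and there you only describe desiderata~(a)--(c) without a mechanism. Two of them are genuine obstacles rather than merely delicate. First, ``no unique sum'' is not a hereditary property: a large subset of a set with no unique sum will in general have unique sums, so it is not clear how ``restricting to a large piece'' can preserve~(a); the paper sidesteps this entirely by never passing to a subset of~$A$. Second, quotienting $H$ by a short relation is not obviously compatible with~(a) either (two elements of $A$ may collide in the quotient, or a previously non-unique sum may become unique), nor is it clear why the dissociated density of the image should increase as required by~(c). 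You flag both issues yourself; they are exactly the points where the paper instead invokes the unique-sum hypothesis directly to manufacture new translates of $D$. As written, the proposal is a plausible-looking but unsubstantiated strategy, and it does not constitute a proof of Proposition~\ref{disssmallprop}.
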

Assuming this proposition for the moment, we can put everything together and complete the proof of Theorem \ref{generallowerboundthm}.
\begin{proof}[Proof of Theorem \ref{generallowerboundthm} assuming Proposition \ref{disssmallprop}]
Let $A\subset G$ have no unique sum. In particular, this means that $A$ is balanced so \eqref{balanceddissineq} gives that
\begin{equation}
    |A|\geqslant \frac{K(A+g)}{2\left(2+\log_2 K(A+g)\right)}\cdot \log_2 p(G)
\label{conclusionineq}
\end{equation}
for some element $g\in G$.
As $A\subset G$ does not have a unique sum, neither does the translated set $A+g$. By Proposition \ref{disssmallprop}, we then get that $$K(A+g)\geqslant \omega_1(p(G)).$$
Plugging this in \eqref{conclusionineq} gives
\begin{align*}
    |A|\geqslant \omega(p(G))\log_2 p(G)
\end{align*}
if we define
$$\omega(n) \vcentcolon= \frac{\omega_1(n)}{2(2+\log_2\omega_1(n))}.$$
Note that by Proposition \ref{disssmallprop}, we have that $\omega_1(n)\gg \sqrt{\log\log\log n}$ so
$$\omega(n) \gg \frac{\sqrt{\log\log \log n}}{\log\log\log \log n}.$$ This concludes the proof of Theorem \ref{generallowerboundthm}.
\end{proof}
\begin{rmk*}
Theorem \ref{generallowerboundthm} is rather delicate in the sense that the result is false if one only assumes that $A$ is balanced and that most sums in $A$ are not unique. To see this, consider $A_1=B\times Q\subset \left(\mathbf{Z}/p\mathbf{Z}\right)^2=G$ with $B$ a minimal balanced set and $Q$ an arithmetic progression of size $200$. Then $A_1$ is balanced as $B$ is, and $A_1$ has that $99\%$ of its sums are non-unique. However, $|A_1|\leqslant 200(1+o(1))\log_2p(G)$.
\end{rmk*}
Our final task, then, is to prove Proposition \ref{disssmallprop}. 
The main tool in the proof is the following proposition.
\begin{proposition}
    Let $G$ be a finite Abelian group and let $A\subset G$ have no unique sum. There exists an absolute constant $C$ such that the following holds. Suppose that $D$ is a dissociated subset of $A$ with $|D|\geqslant 10$ (say) and that $S\subset G$ contains $0$. If
    \begin{equation}
        |S|\leqslant \min\left(\log_2p(G),\left(\frac{|D|^6}{C|A|^5}\right)^{\frac{1}{4}}\right),
    \label{Sassumptions}
    \end{equation}
    then there exists a set $S'\subset G$ containing $0$ and of size
    \begin{equation}
        |S'|\leqslant \max(2|S|,|S|^3)
    \end{equation}
    so that
    \begin{equation}
    \left|(D+S')\cap A\right|\geqslant \left|(D+S)\cap A\right|+\frac{|D|^2}{36|A|}.
    \end{equation}
\label{iteratetran}
\end{proposition}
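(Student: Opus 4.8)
The plan is to run a density increment on $|(D+S)\cap A|$, using ``no unique sum'' to manufacture new elements of $A$ lying in $D+S'$ for a controlled enlargement $S'$. Set $Q=(D+S)\cap A$; since $0\in S$ we have $D\subseteq Q$, and we may assume $Q\subsetneq A$ (otherwise the claimed increment is impossible, and one checks this can only happen when the size hypothesis \eqref{Sassumptions} fails). The first step is to produce a large supply of \emph{witnesses}. For every ordered pair $(d_1,d_2)\in D^2$ the sum $d_1+d_2$ lies in $A+A$, so the no-unique-sum property gives $r_A(d_1+d_2)\geqslant 3$; summing over $D^2$ and subtracting the $O(|D|^2)$ trivial representations produces at least $|D|^2$ quadruples $(d_1,d_2,a_1,a_2)$ with $d_i\in D$, $a_i\in A$, $d_1+d_2=a_1+a_2$ and $\{a_1,a_2\}\neq\{d_1,d_2\}$. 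Now dissociativity of $D$ is used to clean up: a dissociated set contains no four-term arithmetic progression, whence $r_D(\sigma)\leqslant 3$ for every $\sigma$ and the additive energy $E(D)=\sum_\sigma r_D(\sigma)^2$ is of order $|D|^2$; consequently only $O(|D|)$ of these quadruples lie entirely inside $D^4$, so for $|D|\geqslant 10$ a definite proportion of the witnesses has an element $a_1\notin D$.

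Each such witness yields a \emph{shift} $t=a_1-d_1$ with $d_1+t\in A\setminus D$ and $d_2-t\in A$, so that $(D+t)\cap A$ already meets $A\setminus D$. The core of the proof is then to extract a small family $t_1,\dots,t_m$ of shifts — with $m$ small enough that $S'\vcentcolon=S\cup\{\pm t_j\}_j$, or, when this is still too small, $S'\vcentcolon=S+S-S$, has size at most $\max(2|S|,|S|^3)$ and contains $0$ — for which $\bigcup_j (D+t_j)\cap(A\setminus Q)$ captures at least $|D|^2/(36|A|)$ genuinely new elements of $A$. This I would do by a dyadic pigeonhole / second-moment argument on the function $t\mapsto |(D+t)\cap(A\setminus Q)|$: since $\sum_t |(D+t)\cap A|=|D||A|$, the shifts whose value exceeds a threshold $\tau$ number at most $|D||A|/\tau$, while a Cauchy--Schwarz estimate on the witness count (using that any fixed new element is produced by at most $O(|D|)$ witnesses, via $r_D\leqslant 3$) shows the sub-threshold shifts account for too few witnesses to matter. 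Optimising the choice of $\tau$ against the requirement $m\leqslant |S|^3$ is exactly what forces the polynomial constraint $|S|\leqslant(|D|^6/(C|A|^5))^{1/4}$; the other hypothesis $|S|\leqslant\log_2 p(G)$ is what will drive the iteration in Proposition~\ref{disssmallprop}, and within the present proof it is used (via Lemma~\ref{rectificationG}) to model the relevant configurations over the integers and thereby exclude a degenerate scenario in which the witnesses all collapse.

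The main obstacle — and the reason the statement has the shape it does — is the interplay between ``new'' (not in $Q$) and the shift structure: a witness with $a_1\notin D$ is worthless unless $a_1\notin Q$, and productive shifts (those with $|(D+t)\cap A|$ large) need not be few, so one must simultaneously (i) discard the witnesses whose new element already lies in $Q$ while showing few are lost, (ii) bound the number of \emph{distinct} shifts genuinely required so that $S'$ stays of size $\max(2|S|,|S|^3)$, and (iii) control the multiplicity with which a single new element of $A$ is produced. Balancing (i)--(iii) against one another is precisely what yields both the constraint \eqref{Sassumptions} and the increment $|D|^2/(36|A|)$, and I expect this balancing, together with keeping $E(D)$ of order $|D|^2$ throughout, to be the delicate technical heart of the argument.
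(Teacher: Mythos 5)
Your high-level strategy — manufacture witnesses from the no-unique-sum property, extract a controlled family of shifts, and worry separately about witnesses that already lie in $Q=(D+S)\cap A$ — is the right shape, and you correctly identify that the rectification hypothesis $|S|\leqslant\log_2 p(G)$ must enter to control degeneracies. But there is a genuine gap right at the start, in the witness-production step, and it is not a detail: it is the technical heart that makes the whole increment possible.

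You take witnesses from sums $d_1+d_2$ with $d_1,d_2\in D$ and invoke $r_A(d_1+d_2)\geqslant 3$. Dissociativity of $D$ indeed rules out non-trivial representations \emph{inside} $D$, but says nothing about representations inside the much larger set $Q=(D+S)\cap A$: one can perfectly well have $d_1+d_2=(d_1'+s)+(d_2'+s')$ with $d_1',d_2'\in D$, $s,s'\in S$, so the promised ``extra'' representative $a_1$ may already live in $Q$ and contribute nothing. You acknowledge this (``a witness with $a_1\notin D$ is worthless unless $a_1\notin Q$''), but the second-moment/dyadic machinery you sketch does not close the gap, because it presupposes a supply of witnesses that genuinely leave $Q$, and you have no mechanism yet to produce them. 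In fact the worry is acute: $|Q|$ can be up to $|A|$, far larger than $|D|$, so ``most'' representations of $d_1+d_2$ could a priori be internal to $Q$.

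The paper's resolution is a specific device you don't have. For each $d\in D$ it sets $S_d=\{s\in S:d+s\in A\}$, rectifies $S$ over the integers (this is where $|S|\leqslant\log_2 p(G)$ is actually used), and chooses $s_d\in S_d$ to be the preimage of the maximum under the Freiman isomorphism. The point of this choice is Lemma~\ref{simulrect}: for any $d,d'$ the equation $x+y=s_d+s_{d'}$ with $x\in S_d$, $y\in S_{d'}$ has only the trivial solution. One then works with the sums $(d+s_d)+(d'+s_{d'})$ rather than $d+d'$. After deleting two small ``bad'' families — $B^{(1)}(D)$ of elements $d$ with some nonzero $v\in 2S-2S$ sending $d$ back into $D$, and $B^{(2)}(D)$ of pairs whose $s_d$-sum collides with a different $(D+S)$-representation, both of size $O(|S|^4)$ via the skew two-families theorem — the remaining $\Omega(|D|^2)$ pairs have $(d+s_d)+(d'+s_{d'})$ \emph{unique within $Q$} (Lemma~\ref{goodlemma}). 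Only then does the no-unique-sum property force a representative outside $Q$, and the rest of the argument (the sets $N(a),\mathcal{N},\mathcal{N}(1/3)$ and the two-case conclusion with $S'=S\cup(S+t)$ or $S'=2S-S$) turns this supply of genuinely new elements into the increment $|D|^2/(36|A|)$. Without the $s_d$ construction, or something playing the same role, your witness count does not get off the ground. Two smaller inaccuracies: a dissociated set has $r_D(\sigma)\leqslant 2$, not merely $\leqslant 3$, and such a set can in fact contain 3-term arithmetic progressions (the coefficient $-2$ is not allowed in the dissociativity relation), so that part of your cleanup reasoning is not quite right either, though it is not where the real difficulty lies.
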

Roughly speaking, this proposition states that if a set $A$ having no unique sum contains a dissociated subset $D$ so that few translates of $D$ (namely the set $D+S$) contains a certain fraction of all elements of $A$, then there exists a slightly larger set of translates $S'$ so that $D+S'$ contains a significantly bigger fraction of $A$.
Assuming this proposition, we show how to deduce Proposition \ref{disssmallprop} using a density increment argument.
\begin{proof}[Proof of Proposition \ref{disssmallprop} assuming Proposition \ref{iteratetran}]
Let $A\subset G$ have no unique sum, and let $D\subset A$ be a dissociated subset of $A$ of largest possible size. So $|D|=\dim(A)$ and $|D|=\frac{|A|}{K(A)}$.  Let $p=p(G)$. Our goal is to prove that $K(A)\gg \sqrt{\log\log \log p}$. If $|D|<10$, then $K(A)\geqslant \frac{|A|}{10}\gg \log p$ by Corollary \ref{balancedG} as $A$ is balanced, so we are done. Hence, we may assume that $|D|\geqslant 10$ so that $D$ satisfies the assumption of Proposition \ref{iteratetran}. We apply Proposition \ref{iteratetran} with $D\subset A$ and $S=S_0\vcentcolon=\{0\}$. Then either $|S_0|> \min\left(\log_2p,\left(\frac{|D|^6}{C|A|^5}\right)^{\frac{1}{4}}\right)$ or else the assumption \eqref{Sassumptions} is satisfied and we deduce that there exists a set $S_1\subset G$ containing $0$ of size at most $\max(2|S_0|,|S_0|^3)=2$ so that $\left|(D+S_1)\cap A\right|\geqslant \left|(D+S_0)\cap A\right|+\frac{|A|}{36K(A)^2}$. Suppose that after $i$ steps we have a set $S_i\subset G$ containing $0$ and of size at most
$$|S_i|\leqslant 2^{3^i}$$
so that
$D+S_i$ contains at least $\frac{i|A|}{36K(A)^2}$ of the elements in $A$. Then either we have that $|S_i|>\min\left(\log_2p,\left(\frac{|D|^6}{C|A|^5}\right)^{\frac{1}{4}}\right)$ or else \eqref{Sassumptions} is satisfied so we can again apply Proposition \ref{iteratetran} to find a set $S_{i+1}\subset G$ also containing $0$ and of size
$$|S_{i+1}|\leqslant \max(2|S_i|,|S_i|^3)\leqslant 2^{3^{i+1}}$$
so that $D+S_{i+1}$ contains a fraction of at least $\frac{i+1}{36K(A)^2}$ of the elements of $A$. Now it is clear that $\left|(D+S_i)\cap A\right|\leqslant |A|$ for all $i$, and hence the iterative procedure described above must fail for some $j\leqslant 36K(A)^2$. By Proposition \ref{iteratetran}, the only way that this can happen is if $|S_j|>\min\left(\log_2p,\left(\frac{|D|^6}{D|A|^5}\right)^{\frac{1}{4}}\right)$. First, if $|S_j|>\log_2p$, then as $|S_j|\leqslant 2^{3^j}$ we deduce that $36K(A)^2\geqslant j\gg \log\log |S_j|\gg \log\log\log p$ and we have proven \eqref{Kinfty}.
\smallskip

Finally, if $|S_j|>\left(\frac{|D|^6}{C|A|^5}\right)^{\frac{1}{4}}$, then recalling that $|S_j|\leqslant 2^{3^j}$ and that we defined $|D|=\frac{|A|}{K(A)}$ gives
 \begin{align*}
     36K(A)^2&\geqslant j \gg \log\log |S_j|\geqslant \log\Bigg(\frac{1}{4}\log\left(\frac{|A|}{CK(A)^6}\right)\Bigg)\\
     &= \log\Big(\frac{1}{4}\log_2(|A|)-\frac{1}{4}\log_2(CK(A)^6)\Big)\\
     &\geqslant \log\Big(\frac{1}{4}\log_2\log_2p-\frac{1}{4}\log_2(CK(A)^6)\Big),
 \end{align*}
where here we used the weak bound $|A|\geqslant \log_2p(G)$ which by Corollary \ref{balancedG} holds even for balanced sets.
We deduce that $K(A)\gg \sqrt{\log\log\log p}$ as desired.
\end{proof}
To complete the proof, it now only remains to prove Proposition \ref{iteratetran}. So far, we have not yet used that $A$ has no unique sum but only the much weaker assumption that $A$ is balanced. As we saw, the conclusion of Proposition \ref{disssmallprop} fails completely for a general balanced set, even if most sums are not unique. So our proof of Proposition \ref{iteratetran} here must inevitably use that $A$ has no unique sum. This will make the proof somewhat technical, but this seems unavoidable at this stage. We begin with a useful lemma.
\begin{lemma}
    Let $G$ be a finite Abelian group and let $S\subset G$ have size at most $\log_2p(G)$. Then  we can assign an element $s_X\in X$ to each non-empty subset $X\subset S$ such that the following holds. Let $X,Y\subset S$ be non-empty subsets, then the only solution to $x+y=s_X+s_Y$ with $x\in X$ and $y\in Y$ is the trivial solution $(x,y)=(s_X,s_Y)$.
\label{simulrect}
\end{lemma}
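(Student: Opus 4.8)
The plan is to use the rectification result Lemma \ref{rectificationG} to reduce the problem to the integers, where the statement becomes a transparent fact about choosing maximal elements. Since $|S| \leqslant \log_2 p(G)$, the set $S$ is Freiman-isomorphic to a set of integers; fix such an isomorphism $\psi \colon S \to \tilde S \subset \mathbf{Z}$. I would like to transport the selection rule from $\tilde S$ back to $S$, so first I would verify that a Freiman isomorphism between $S$ and $\tilde S$ induces, for each non-empty $X \subset S$, a Freiman isomorphism between $X$ and $\psi(X)$ (this is immediate, since any additive relation inside $X$ is in particular a relation inside $S$), and more importantly that the equation $x + y = s_X + s_Y$ with $x \in X$, $y \in Y$, $s_X \in X$, $s_Y \in Y$ is a four-term additive relation among elements of $S$ and hence is preserved in both directions by $\psi$. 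Thus it suffices to prove the statement for $\tilde S \subset \mathbf{Z}$ and then pull the assignment back via $\psi^{-1}$.

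For a finite set of integers, the natural choice is $s_X := \max X$ for each non-empty $X \subset \tilde S$. I would then check the required property directly: suppose $x \in X$, $y \in Y$ with $x + y = \max X + \max Y$. Since $x \leqslant \max X$ and $y \leqslant \max Y$, the only way the sum can equal $\max X + \max Y$ is if $x = \max X$ and $y = \max Y$, i.e. $(x,y) = (s_X, s_Y)$. This is the entire content in the integer setting, and it is completely elementary — there is no genuine combinatorial obstacle here.

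The one subtlety worth spelling out is the direction in which rectification is used: I need that a relation $x + y = s_X + s_Y$ \emph{in $G$} forces the same relation among the integer images (so that the integer argument applies), which uses that $\psi$ is a Freiman homomorphism, and conversely that the trivial solution in $\mathbf{Z}$ pulls back to the trivial solution in $G$, for which I use that $\psi$ is injective (so $x = s_X$ in $G$ follows from $\psi(x) = \psi(s_X)$). Both directions are available since $\psi$ is a Freiman \emph{isomorphism}. Hence the main — and essentially only — step is invoking Lemma \ref{rectificationG}; everything else is bookkeeping. I would therefore organize the write-up as: (1) apply Lemma \ref{rectificationG} to get $\psi \colon S \to \tilde S \subset \mathbf{Z}$; (2) define $s_X := \psi^{-1}(\max \psi(X))$; (3) given a solution $x+y = s_X+s_Y$ in $G$, apply $\psi$ to get $\psi(x)+\psi(y) = \max\psi(X) + \max\psi(Y)$ in $\mathbf{Z}$, conclude $\psi(x) = \max\psi(X)$ and $\psi(y) = \max\psi(Y)$, and apply injectivity of $\psi$ to finish.
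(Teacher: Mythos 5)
Your proposal is correct and matches the paper's proof essentially verbatim: both apply Lemma \ref{rectificationG} to rectify $S$ to integers, set $s_X = \phi^{-1}(\max \phi(X))$, and then argue in $\mathbf{Z}$ via the trivial inequality $x + y \leqslant \max\phi(X) + \max\phi(Y)$, using the Freiman isomorphism to transfer the additive relation in both directions. Nothing to add.
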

\begin{proof}
    $S$ is rectifiable by Lemma \ref{rectificationG} and let $\phi:S\to S'\subset \mathbf{N}$ be a Freiman isomorphism to a subset of $\mathbf{N}$. Let $X\subset S$ be non-empty. We claim that the choice $s_X=\phi^{-1}(\max \phi(X))$ works. Let $X,Y\subset S$ be non-empty. As $\phi$ is a Freiman isomorphism, it is enough to show that the only solution $(x,y)\in \phi(X)\times \phi(Y)$ to $x+y=\max\phi(X)+\max \phi(Y)$ is the trivial one. This is clear since $x+y\leqslant \max\phi(X)+\max \phi(Y)$ is an inequality in the integers, where equality only holds if $(x,y)=(\max\phi(X),\max \phi(Y))$.
\end{proof}
We are now ready to begin the proof of Proposition \ref{iteratetran}.
\begin{proof}[Proof of Proposition \ref{iteratetran}]
Let $G$ be a finite Abelian group and let $A\subset G$ have no unique sum. Suppose that $D$ is a dissociated subset of $A$. Let $S\subset G$ contain $0$ and have size $|S|\leqslant\min\left( \log_2 p(G),\left(\frac{|D|^6}{C|A|^5}\right)^{\frac{1}{4}}\right)$. Finally, suppose that the set $D+S$ contains a fraction $\alpha$ of all elements of $A$, meaning that
\begin{equation}
    \left|(D+S)\cap A\right| = \alpha|A|.
\end{equation}
The idea is that since $D$ is dissociated and the set of shifts $S$ is small, the set $(D+S)\cap A$ still has many unique sums. Since $A$ has no unique sum, we will show that this forces $A$ to contain a large part of a larger set of translates $D+S'$.
\bigskip

We introduce some notation. For each $d\in D$, we define 
\begin{equation}
    S_d=\{s\in S:d+s\in A\}
    \label{S_ddefin}
\end{equation} and note that $0\in S_d$ as $D$ is a subset of $A$. Hence, each $S_d$ is a non-empty subset of $S$ and by applying Lemma \ref{simulrect}, we can find an element $s_d\in S_d$ for each $d\in D$ so that whenever $d,d'\in D$, then the only solution $(x,y)\in S_d\times S_{d'}$ to
\begin{equation}
    x+y=s_d+s_{d'}
    \label{s_duniquesum}
\end{equation}
is the trivial solution $(x,y)=(s_d,s_{d'})$. Also, let us define the following set of elements of $D$:
\begin{equation}
    B^{(1)}(D)\vcentcolon=\left\{d\in D: \text{there is a $v\in (2S-2S)\setminus{\{0\}}$ so that $d+v\in D$}\right\}.
\label{B^1defin}
\end{equation}
We think of $B^{(1)}(D)$ as the set of `bad' elements in $D$ as they will not be useful for our later argument. We shall prove later that this set $B^{(1)}(D)$ of bad elements of $D$ is rather small and hence we can simply remove it from $D$, so we will work with the set $G^{(1)}(D)\vcentcolon= D\setminus{B^{(1)}(D)}$ of `good' numbers. It turns out that $G^{(1)}(D)$ can still contain some bad pairs which leads to the following definition. Let $B^{(2)}(D)$ be the set of unordered pairs $\{d,d'\}\in {G^{(1)}(D)\choose 2}$ for which there exist $e,e'\in D$ and $s,s'\in S$ so that 
\begin{equation}
    d+s_d+d'+s_{d'}=e+s+e'+s'
\label{s_dequal}
\end{equation} 
and $\{e,e'\}\neq \{d,d'\}$.\footnote{ Here, we use the notation ${E \choose 2}$ to denote $\{\{x,y\}:x,y\in E, x\neq y\}$, the set of unordered pairs of  elements of $E$.}
We think of $B^{(2)}(D)$ as the set of `bad' pairs in ${G^{(1)}(D)\choose 2}$ because one can see how if \eqref{s_dequal} holds, then the sum $(d+s_d)+(d'+s_{d'})\in (D+S)+(D+S)$ does not yield unique sum in $D+S$. We will see later that $B^{(2)}(D)$ is also not too large so we can also remove such pairs. Hence, we define the set of `good' pairs in ${D\choose 2}$ as follows:
\begin{equation}
    G^{(2)}=G^{(2)}(D)\vcentcolon={G^{(1)}(D)\choose 2}\setminus B^{(2)}(D).
\end{equation}
The following lemma shows what we mean by a pair $\{d,d'\}\in G^{(2)}$ being `good', namely that $(d+s_d)+(d'+s_{d'})$ is a unique sum in $(D+S)\cap A$.
\begin{lemma}
Let $\{d,d'\}\in G^{(2)}(D)$. Then the only solutions $(x,y)\in \left((D+S)\cap A\right)^2$ to
\begin{equation}
x+y=(d+s_d)+(d'+s_{d'})
\label{Gunique}
\end{equation}
are the trivial ones $(x,y) = (d+s_d,d'+s_{d'}),(d'+s_{d'},d+s_d)$.
\label{goodlemma}
\end{lemma}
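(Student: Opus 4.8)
The plan is to expand a hypothetical solution into its $D$- and $S$-parts and then split into two cases according to whether the $D$-parts coincide with $\{d,d'\}$, each case being ruled out by one of the structures already set up. Concretely, suppose $x,y\in (D+S)\cap A$ satisfy $x+y=(d+s_d)+(d'+s_{d'})$. Since $x,y\in D+S$ we may fix representations $x=e+s$ and $y=e'+s'$ with $e,e'\in D$ and $s,s'\in S$, and since $x,y\in A$ we have in the notation of \eqref{S_ddefin} that $s\in S_e$ and $s'\in S_{e'}$. The hypothesis then rearranges to
$$d+s_d+d'+s_{d'}=e+s+e'+s'.$$

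If $\{e,e'\}\neq\{d,d'\}$, then this identity together with $e,e'\in D$ and $s,s'\in S$ is precisely a witness of the defining relation \eqref{s_dequal}; since moreover $\{d,d'\}\in{G^{(1)}(D)\choose 2}$ (as $\{d,d'\}\in G^{(2)}(D)$), we conclude $\{d,d'\}\in B^{(2)}(D)$, contradicting $\{d,d'\}\in G^{(2)}(D)={G^{(1)}(D)\choose 2}\setminus B^{(2)}(D)$. Hence $\{e,e'\}=\{d,d'\}$, and as $d\neq d'$ we may, after possibly interchanging $x$ and $y$, assume $e=d$ and $e'=d'$. Cancelling $d+d'$ leaves $s+s'=s_d+s_{d'}$ with $(s,s')\in S_d\times S_{d'}$ (using $s\in S_e=S_d$ and $s'\in S_{e'}=S_{d'}$), so the property \eqref{s_duniquesum} of the elements $s_d,s_{d'}$ furnished by Lemma \ref{simulrect} forces $(s,s')=(s_d,s_{d'})$. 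Thus $(x,y)=(d+s_d,d'+s_{d'})$, or its transpose if we interchanged, which is exactly one of the two trivial solutions.

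I do not expect a genuine obstacle here: the content of the lemma is just that $B^{(2)}(D)$ was defined to exclude the possibility that the $D$-parts of a second representation differ from $\{d,d'\}$, while the rectification-based choice of the $s_d$ in Lemma \ref{simulrect} (applied via $S_d,S_{d'}\subseteq S$) excludes the possibility that only the $S$-parts differ. The only points needing a little care are that a representation $x=e+s$ with $x\in A$ indeed forces $s\in S_e$, that the pair $\{d,d'\}$ is eligible to lie in $B^{(2)}(D)$ (i.e. lies in ${G^{(1)}(D)\choose 2}$), and that the two orderings of the equality $\{e,e'\}=\{d,d'\}$ yield precisely the two trivial solutions asserted.
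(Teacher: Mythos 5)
Your proof is correct and follows essentially the same route as the paper's: decompose $x=e+s$, $y=e'+s'$, use the definition of $B^{(2)}(D)$ to force $\{e,e'\}=\{d,d'\}$, then invoke the rectification-based property of $s_d,s_{d'}$ from Lemma \ref{simulrect} to force $(s,s')=(s_d,s_{d'})$. The small points you flag at the end (that $s\in S_e$ follows from $x\in A$, and that $\{d,d'\}\in\binom{G^{(1)}(D)}{2}$) are exactly the details the paper also relies on, so there is no gap.
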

\begin{proof}
Let $\{d,d'\}\in G^{(2)}(D)$ and suppose that $(x,y)\in \left((D+S)\cap A\right)^2$ is a solution to
$x+y=(d+s_d)+(d'+s_{d'})$. We show that $(x,y)$ is a trivial solution. As $x,y\in(D+S)\cap A$, there exist $e,e'\in D$ and $s,s'\in S$ so that $x=e+s$ and $y=e'+s'$. Further note that as $x,y\in A$, this means that $s\in S_e$ and $s'\in S_{e'}$ by the definition \eqref{S_ddefin} of the sets $S_e,S_{e'}$. We have that \begin{equation}
    d+s_d+d'+s_{d'}=x+y=e+s+e'+s'
    \label{uniquesumsD+S}
\end{equation} so we have an equation of the form \eqref{s_dequal} and recalling that $\{d,d'\}\in  G^{(2)}(D)$ is not in $B^{(2)}(D)$, we must then have that $\{d,d'\}=\{e,e'\}$. By reordering $x$ and $y$ (which does not affect whether or not $(x,y)$ is a trivial solution) we may therefore assume that $d=e$ and that $d'=e'$. Plugging this into \eqref{uniquesumsD+S} gives that $s_d+s_{d'}=s+s'$. But as $s\in S_d$ and $s'\in S_{d'}$ and as $s_d,s_{d'}$ were chosen to satisfy the conclusion of Lemma \ref{simulrect}, this is only possible if $s=s_d$ and $s'=s_{d'}$. Hence, $x=e+s=d+s_d$ and $y=e'+s'=d'+s_{d'}$ is a trivial solution to \eqref{Gunique}, as desired.
\end{proof}
We have shown the desirable property that pairs in $G^{(2)}(D)$ yield unique sums, and we now show that we have not removed too many elements in constructing $G^{(2)}(D)$ from ${D \choose 2}$, i.e. there are few `bad' pairs. Here, and several more times in the proof it will be convenient to have the following lemma at our disposal. It is an easy consequence of the skew version of Bollob\'as's Two Families Theorem (see for example \cite{frankl}), although as all sets involved have size at most 2 one could also give an elementary direct proof.
\begin{lemma}[Two Families Theorem]
There exists an absolute constant $C_1$ such that the following holds. Let $P_1,P_2,\dots,P_k$ and $Q_1,Q_2,\dots,Q_k$ be sets of size $n\leqslant 2$ so that 
\begin{itemize}
    \item $P_i\cap Q_i=\emptyset$ for all $1\leqslant i\leqslant k$.
    \item For every $i,j\in \{1,\dots,k\}$ with $i\neq j$ we have that $P_i\cap Q_j\neq \emptyset$ or $P_j\cap Q_i\neq\emptyset$.
\end{itemize}
Then $k\leqslant C_1$.
\label{bollobas}
\end{lemma}
The first important point is that $B^{(1)}(D)$ is rather small since $D$ is dissociated. 
\begin{lemma}
    We have that
    \begin{equation}
        \left|B^{(1)}(D)\right|\leqslant C_1|S|^4.
        \label{B1small}
    \end{equation}
\end{lemma}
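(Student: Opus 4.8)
The plan is to cover $B^{(1)}(D)$ by a controlled number of ``translated copies'' of $D$, each of bounded size. For $v\in G$ write $D_v\vcentcolon=\{d\in D:d+v\in D\}$. Since every $d\in B^{(1)}(D)$ lies in $D$ and satisfies $d+v\in D$ for some $v\in(2S-2S)\setminus\{0\}$, we immediately get the inclusion
\[
B^{(1)}(D)\subseteq\bigcup_{v\in(2S-2S)\setminus\{0\}}D_v .
\]
Now $2S=S+S$ has size at most $|S|^2$, so $2S-2S$ has size at most $|S|^4$. Hence it suffices to prove that $|D_v|\leqslant C_1$ for every fixed $v\neq 0$; summing this bound over the at most $|S|^4$ relevant values of $v$ then gives $|B^{(1)}(D)|\leqslant C_1|S|^4$, which is \eqref{B1small}.

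To bound $|D_v|$ for a fixed $v\neq 0$, I would combine the fact that $D$ is dissociated with the Two Families Theorem (Lemma \ref{bollobas}). Enumerate $D_v=\{d_1,\dots,d_m\}$ and, for each $i$, set $P_i\vcentcolon=\{d_i\}$ and $Q_i\vcentcolon=\{d_i+v\}$; these are singletons and $P_i\cap Q_i=\emptyset$ because $v\neq 0$. The crucial claim is that for all $i\neq j$ one has $d_i=d_j+v$ or $d_j=d_i+v$, equivalently $P_i\cap Q_j\neq\emptyset$ or $P_j\cap Q_i\neq\emptyset$. If this failed, then $d_i,d_j,d_i+v,d_j+v$ would be four distinct elements, all of them in $D$ (since $d_i,d_j\in D_v\subseteq D$), and the identity $d_i+(d_j+v)=d_j+(d_i+v)$ would exhibit two distinct $2$-element subsets of $D$ with the same sum, contradicting that $D$ is dissociated. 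So the families $(P_i)_{i\leqslant m}$ and $(Q_i)_{i\leqslant m}$ satisfy the hypotheses of Lemma \ref{bollobas}, whence $m=|D_v|\leqslant C_1$.

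I do not expect a serious obstacle: the argument is just the inclusion above, the crude size bound $|2S-2S|\leqslant|S|^4$, and the dissociativity-plus-Two-Families estimate $|D_v|\leqslant C_1$. The one point requiring a little care is the distinctness bookkeeping in the crucial claim — checking that no two of $d_i,d_j,d_i+v,d_j+v$ coincide — but this follows at once from $v\neq 0$, from $d_i\neq d_j$ (distinct elements of $D_v$), and from the assumed failure $d_i\neq d_j+v$, $d_j\neq d_i+v$; one also uses $C_1\geqslant1$ to handle the trivial cases $|D_v|\leqslant 1$.
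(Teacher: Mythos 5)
Your proof is correct and follows essentially the same route as the paper's: pigeonhole over the at most $|S|^4$ choices of $v\in(2S-2S)\setminus\{0\}$, then use dissociatedness of $D$ together with Lemma \ref{bollobas} to bound the number of $d\in D$ with $d+v\in D$ by $C_1$. The paper phrases this as a contradiction (assume $|B^{(1)}(D)|>C_1|S|^4$, pigeonhole to find $C_1+1$ elements sharing one $v$, then invoke the Two Families Theorem to produce a violation of dissociatedness), whereas you present it as a direct covering bound $|B^{(1)}(D)|\leqslant\sum_v|D_v|$, but the underlying argument is the same.
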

\begin{proof}
Let us prove this claim by assuming for a contradiction that $\left|B^{(1)}(D)\right|> C_1|S|^4>C_1\left|(2S-2S)\setminus{\{0\}}\right|$. From \eqref{B^1defin}, we deduce that there must be some non-zero $v\in (2S-2S)$ so that for at least $C_1+1$ distinct elements $d_i\in D$, we have that $d_i+v\in D$ for $1\leqslant i\leqslant C_1+1$. Hence we can find $e_i\in D$ so that $d_i+v= e_i$ and note that \begin{equation}
    d_i\neq e_i
    \label{dnote1}
\end{equation} as $v\neq 0$.
Hence, we can apply Lemma \ref{bollobas} with $P_i=\{d_i\}$ and $Q_i=\{e_i\}$ to find distinct $i,j$ with $d_i\neq e_j$ and $d_j\neq e_i$. Without loss of generality, let $i=1,j=2$ so we deduce that
$$d_1+e_2=d_1+v+d_2=e_1+d_2$$ and we have shown that $\{d_1,e_2\},\{e_1,d_2\}$ are two subsets of $D$ with equal sum.\footnote{To show that $\{d_1,e_2\},\{d_2,e_1\}$ are subsets of $D$ as opposed to a multisubsets, we used the skew Two Families Theorem. For this application, a very simple direct argument would have sufficed, but we will make similar applications of the Two Families Theorem later and hence try to do this in a consistent manner.} As $D$ is dissociated, these sets are equal so by \eqref{dnote1} we must have that $d_1=d_2$. This is the required contradiction as $d_1,d_2$ were distinct. Hence, $\left|B^{(1)}(D)\right|\leqslant C_1|S|^4$ as desired.
\end{proof}
We now show that $B^{(2)}(D)$ is also not too large.
\begin{lemma}
    We have that
    \begin{equation}
    \left|B^{(2)}(D)\right|\leqslant C_1|S|^4+|D||S|^4
\label{smallbaad}
\end{equation}
\end{lemma}
\begin{proof}
Assume for a contradiction that $\left|B^{(2)}(D)\right|> C_1|S|^4+|D||S|^4$. For every pair $\{d,d'\}\in B^{(2)}(D)$ we can find $e,e'\in D$ with $\{e,e'\}\neq\{d,d'\}$ and $s,s'\in S$ such that \eqref{s_dequal} holds. So to each pair $\{d,d'\}\in B^{(2)}(D)$, we can associate a $4-$tuple $(s_d,s_{d'},s,s')\in S^4$ (there may be more than one choice of such a tuple, but in this case we pick one arbitrarily). As we are assuming for a contradiction that $\left|B^{(2)}(D)\right|> C_1|S|^4+|D||S|^4$, there must be some such $4-$tuple in $S^4$ that is associated to $C_1+|D|+1$ distinct pairs $\{d_i,d_i'\}\in B^{(2)}(D)$. Let $e_i,e_i'\in D$ and $s_i,s_i'\in S$ be so that
\begin{equation}
d_i+s_{d_i}+d_i'+s_{d_i'}=e_i+s_i+e_i'+s_i'
\label{s_dequali}
\end{equation}
with 
\begin{equation}
    \{e_i,e_i'\}\neq\{d_i,d_i'\} 
    \label{dnote}
\end{equation} for $i=1,\dots,C_1+|D|+1$, where $e_i,e'_i,s_i,s'_i$ exist by definition of $B^{(2)}(D)$. Then the assumption that all $\{d_i,d_i'\}$ are associated to the same $4-$tuple in $S^4$ means precisely that there exist fixed $s_d,s_{d'},s,s'\in S$ so that   \begin{equation}
s_{d_i}=s_{d},\,s_{d_i'}=s_{d'},\,s_i=s,\, s_i'=s'
\label{sametuple}
\end{equation} for all $i$. Clearly there are at most $|D|$ of these indices $i$ for which $e_i=e'_i$, so suppose without loss of generality that $e_i\neq e'_i$ for $i=1,\dots,C_1+1$. We will apply Lemma \ref{bollobas} with $P_i=\{d_i,d_i'\}$ and $Q_i=\{e_i,e_i'\}$ for $i=1,\dots,C_1+1$ and we first show that the condition $P_i\cap Q_i=\emptyset$ is satisfied. Indeed, if $d_i=e_i$ for a contradiction (the other cases when $P_i\cap Q_i\neq\emptyset$ can be handled similarly), then \eqref{s_dequali} would imply that $d_i'+(s_{d_1}+s_{d_1'}-s_1-s_1')=e_i'$.  In other words, $d_1'+v=e_1'$ for some $v\in 2S-2S$ and note that $v\neq 0$ as else $d_1'=e_1'$ but, as $d_1=e_1$, this would contradict \eqref{dnote}. However, the existence of a non-zero $v\in 2S-2S$ so that $d_1'+v\in D$ means precisely that $d_1'\in B^{(1)}(D)$ and this is impossible because we removed $B^{(1)}(D)$ from $D$ to obtain $G^{(1)}(D)$. Lemma \ref{bollobas} therefore gives distinct $i,j$ such that $P_i\cap Q_j=\emptyset=P_j\cap Q_i$ and without loss of generality, let $i=1,j=2$. Plugging \eqref{sametuple} in \eqref{s_dequal} then gives $$d_1+d_1'-e_1-e_1' = s_1+s_1'-s_{d_1}-s_{d_1'}=s_2+s_2'-s_{d_2}-s_{d_2'}= d_2+d_2'-e_2-e_2'.$$ So we obtain two subsets $Q=\{d_1,d_1',e_2,e_2'\}$ and $R=\{e_1,e_1',d_2,d_2'\}$ of the dissociated set $D$ having equal sum, where we noted that these are not multisets as $P_1\cap Q_2=\emptyset=P_2\cap Q_1$ by our application of Lemma \ref{bollobas} and that $e_i\neq e'_i$ for $i=1,\dots, C_1+1$. We conclude that $Q=R$. As  $\{d_1,d_1'\},\{d_2,d_2'\}$ were distinct pairs, we may (after potentially relabeling these elements) assume that $d_1\notin \{d_2,d_2'\}$. Then $d_1\in Q\setminus\{d_2,d_2'\} =R\setminus\{d_2,d_2'\}=\{e_1,e_1'\}$ which gives the desired contradiction as we showed above that $P_i\cap Q_i=\emptyset$ for all $i$.
\end{proof}

From \eqref{B1small} and \eqref{smallbaad} we deduce that the set $G^{(2)}$ of good pairs is still large:
\begin{align}
    \left|G^{(2)}\right|&\geqslant {\left|G^{(1)}(D)\right|\choose 2}-\left|B^{(2)}(D)\right|\nonumber\\
    &= {\left|D\right|-\left|B^{(1)}(D)\right|\choose 2}-\left|B^{(2)}(D)\right|\nonumber\\
    &\geqslant {|D|-C_1|S|^4\choose 2}-C_1\left|S\right|^4-|D||S|^4\nonumber\\
    &\geqslant \frac{|D|^2}{3},
\label{manygood}
\end{align}
where in the final line we used that $|D|\geqslant 10$ and the assumption \eqref{Sassumptions} so that $|S|^4\leqslant\frac{|D|^6}{C|A|^5}\leqslant\frac{|D|}{C}$ as $D\subset A$ so $|D|\leqslant |A|$, and we can take $C$ to be a sufficiently large constant. This result that there are many good pairs in $(D+S)\cap A$, i.e. many pairs giving a unique sum in $(D+S)\cap A$, is the only result out of all the work we did in this proof so far that will be needed for the rest of the argument.
\bigskip

For every pair $\{d,d'\}\in G^{(2)}$, we have that $(d+s_d)+(d'+s_{d'})$ is a sum in $A+A$ and therefore it must allow for a non-trivially different representation as a sum of two elements $x,y\in A$. By Lemma \ref{goodlemma}, it cannot be the case that both of $x,y$ lie in $(D+S)\cap A$, so we can define $x(d,d'),y(d,d')\in A$ so that $(d+s_d)+(d'+s_{d'})=x(d,d')+y(d,d')$ and $x(d,d')\in A\setminus{(D+S)}$.\footnote{Technically, one would have to write something like $x(\{d,d'\})$ as $\{d,d'\}$ is an unordered pair, but this is too cumbersome.} We introduce some further notation. Define for each $a\in A$ the set
\begin{equation}
    N(a)\vcentcolon=\left\{\{d,d'\}\in G^{(2)}: x(d,d')=a\right\}.
\label{N(a)definition}
\end{equation}
 so from \eqref{manygood} we obtain the inequality
\begin{equation}
    \sum_{a\in A\setminus{(D+S)}}\left|N(a)\right|=\left|G^{(2)}\right|\geqslant \frac{|D|^2}{3}
\label{largeN(a)sum}
\end{equation}
as each pair $\{d,d'\}\in G^{(2)}$ appears in exactly one $N(a)$ with $a\notin D+S$. We now pick out those $a\in A\setminus{(D+S)}$ for which $N(a)$ is large. We define
\begin{equation}
    \mathcal{N}\vcentcolon=\left\{a\in A\setminus{(D+S)}: |N(a)|\geqslant \frac{|D|^2}{6|A|}\right\}.
\label{mathcalNdefinition}
\end{equation}
We show that by a simple averaging argument, $\mathcal{N}$ is fairly large.
\begin{lemma}
    We have that
    $$\left|\mathcal{N}\right|\geqslant \frac{|D|^2}{6|A|}.$$
    \label{Nlarge}
\end{lemma}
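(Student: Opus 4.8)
The plan is to run a first-moment (averaging) argument on $\sum_{a}|N(a)|$, whose size is controlled from below by \eqref{largeN(a)sum}. The one non-formal ingredient is a \emph{uniform} upper bound on each individual term, namely $|N(a)|\leqslant|A|$ for every $a\in A\setminus(D+S)$, and I expect this to be the only step requiring genuine input: it is precisely the point at which the removal of the bad pairs $B^{(2)}(D)$ pays off.

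First I would split the sum in \eqref{largeN(a)sum} according to membership in $\mathcal{N}$. For $a\in A\setminus(D+S)$ with $a\notin\mathcal{N}$ we have $|N(a)|<\frac{|D|^2}{6|A|}$ by the definition \eqref{mathcalNdefinition} of $\mathcal{N}$, and there are at most $|A\setminus(D+S)|\leqslant|A|$ such elements, so together they contribute at most $|A|\cdot\frac{|D|^2}{6|A|}=\frac{|D|^2}{6}$ to the sum. Subtracting this from \eqref{largeN(a)sum} gives
\[
\sum_{a\in\mathcal{N}}|N(a)|\;\geqslant\;\frac{|D|^2}{3}-\frac{|D|^2}{6}\;=\;\frac{|D|^2}{6}.
\]

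Next I would establish the pointwise bound $|N(a)|\leqslant|A|$ by showing that the map $\{d,d'\}\mapsto y(d,d')$ is injective on $N(a)$. Indeed, if $\{d_1,d_1'\},\{d_2,d_2'\}\in N(a)$ satisfy $y(d_1,d_1')=y(d_2,d_2')$, then since $x(d_i,d_i')=a$ for $i=1,2$ we obtain
\[
(d_1+s_{d_1})+(d_1'+s_{d_1'})=a+y(d_1,d_1')=(d_2+s_{d_2})+(d_2'+s_{d_2'});
\]
as $d_2,d_2'\in D$ and $s_{d_2},s_{d_2'}\in S$, this is exactly a relation of the shape \eqref{s_dequal} for the pair $\{d_1,d_1'\}$, and since $\{d_1,d_1'\}\in G^{(2)}$ is not a bad pair we must have $\{d_2,d_2'\}=\{d_1,d_1'\}$. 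Hence $|N(a)|$ is bounded by the number of possible values $y(d,d')\in A$, giving $|N(a)|\leqslant|A|$. Combining the two bounds, $\frac{|D|^2}{6}\leqslant\sum_{a\in\mathcal{N}}|N(a)|\leqslant|\mathcal{N}|\cdot|A|$, which rearranges to $|\mathcal{N}|\geqslant\frac{|D|^2}{6|A|}$, as required. The main obstacle, as noted, is the uniform bound $|N(a)|\leqslant|A|$ — it is what forced the introduction and estimation of $B^{(2)}(D)$ — while the rest is a routine averaging computation.
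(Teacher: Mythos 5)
Your proof is correct and takes essentially the same route as the paper: the key uniform bound $|N(a)|\leqslant|A|$ via injectivity of $\{d,d'\}\mapsto y(d,d')$ on $N(a)$, followed by a routine averaging computation using \eqref{largeN(a)sum}. The one small difference is that you deduce the injectivity directly from the definition of $B^{(2)}(D)$, whereas the paper cites Lemma~\ref{goodlemma}; this is the same underlying idea, and if anything your phrasing is a touch cleaner, since the conclusion of Lemma~\ref{goodlemma} as literally stated identifies the translated pair $(d+s_d,d'+s_{d'})$ rather than $\{d,d'\}$ itself, so one would have to unwind back to the definition of $B^{(2)}(D)$ anyway.
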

\begin{proof}
First we prove that for every $a\in A$, we have the upper bound $\left|N(a)\right|\leqslant |A|$. In fact, we show that if $\{d_1,d_1'\},\{d_2,d_2'\}\in N(a)$ are distinct, then $y(d_1,d_1')\neq y(d_2,d_2')$. As $y(d,d')\in A$ always holds, there can then be at most $|A|$ pairs in $N(a)$. Now let $\{d_1,d_1'\},\{d_2,d_2'\}\in N(a)$ with $y(d_1,d_1')= y(d_2,d_2')$, then as $x(d_1,d_1')=x(d_2,d_2')=a$ we get that $$(d_1+s_{d_1})+(d_1'+s_{d_1'})=a+y(d_1,d_1')=a+y(d_2,d_2')=(d_2+s_{d_2})+(d_2'+s_{d_2'})$$ so that $\{d_1,d_1'\}=\{d_2,d_2'\}$ are not distinct by Lemma \ref{goodlemma}.
\smallskip

Using that $\left|N(a)\right|\leqslant |A|$ for $a\in \mathcal{N}$ and that $\left|N(a)\right|\leqslant \frac{|D|^2}{6|A|}$ for all other $a$, we get from \eqref{largeN(a)sum} that
\begin{align*}
    \frac{|D|^2}{3}&\leqslant \sum_{a\in A\setminus{(D+S)}}\left|N(a)\right|\\
    &\leqslant |A|\left|\mathcal{N}\right|+\frac{|D|^2}{6|A|}\left|A\setminus \mathcal{N}\right|\\
    &\leqslant |A|\left|\mathcal{N}\right|+\frac{|D|^2}{6}
\end{align*}
so that $\left|\mathcal{N}\right|\geqslant \frac{|D|^2}{6|A|}$ as desired.
\end{proof}
Next, we show that for at least half the elements $a\in \mathcal{N}$, there are many unordered pairs in $N(a)$ that intersect in a common element. Let us define $$\mathcal{N}(1/3)\!\vcentcolon=\!\left\{a\!\in\! \mathcal{N}: \!\text{ $\exists d(a)\in D$ so that $d(a)\!\in\!P$ for at least $\frac{|N(a)|}{3}$ many $P\!\in \!N(a)$}\right\}.$$
\begin{lemma}
    We have that $\left|\mathcal{N}(1/3)\right|\geqslant \frac{\left|\mathcal{N}\right|}{2}$.
\label{N(1/3)large}
\end{lemma}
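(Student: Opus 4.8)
The statement is purely combinatorial once the right features of the sets $N(a)$ are recorded, so the plan is to run a dichotomy on each $a\in\mathcal N$ and bound the number of ``bad'' $a$'s. For $a\in\mathcal N$ regard $N(a)\subseteq\binom{D}{2}$ as the edge set of a graph $H_a$ on vertex set $D$. If some vertex $d$ has degree at least $|N(a)|/3$ in $H_a$, then by definition $a\in\mathcal N(1/3)$ with $d(a)=d$. Otherwise $H_a$ has maximum degree $<|N(a)|/3$, and a greedy matching argument (delete both endpoints of a chosen edge, killing fewer than $2|N(a)|/3$ edges, hence leaving at least one more edge since $|N(a)|\geqslant 4$ for such $a$) produces at least two — in fact several — pairwise disjoint pairs in $N(a)$. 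It therefore suffices to show that the set of such ``spread-out'' $a$ has size at most $|\mathcal N|/2$.

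To exploit a spread-out $a$ I would use the two facts already available about $N(a)$: for every $\{d,d'\}\in N(a)$ one has $(d+s_d)+(d'+s_{d'})=a+y(d,d')$ with $y(d,d')\in A$ and $a\notin D+S$, and (from the proof of Lemma \ref{Nlarge}) distinct pairs of $N(a)$ give distinct values $y(d,d')$. Subtracting the identities attached to two disjoint pairs $\{d_1,d_1'\},\{d_2,d_2'\}\in N(a)$ cancels $a$ and yields a relation
\[
d_1+d_1'-d_2-d_2' \;=\; (s_{d_2}+s_{d_2'}-s_{d_1}-s_{d_1'}) + \bigl(y(d_2,d_2')-y(d_1,d_1')\bigr),
\]
i.e. a linear relation among four distinct elements of the dissociated set $D$, an element of $2S-2S$, and two distinct elements of $A$. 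Since $D$ is dissociated, this is a genuinely constrained configuration, and one can arrange (exactly as in the proofs that $|B^{(1)}(D)|,|B^{(2)}(D)|\leqslant C_1|S|^4$) to extract from it equal-sum subsets of $D$, which then must coincide.

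Finally I would count these configurations globally. If the lemma failed there would be more than $|\mathcal N|/2\geqslant |D|^2/(12|A|)$ spread-out $a$'s, each contributing several disjoint pairs from its own $N(a)$; using that each unordered pair lies in exactly one $N(a)$, that $|S|$ is negligible next to $|D|$ by the hypothesis \eqref{Sassumptions}, and the Two Families Theorem \ref{bollobas} to bound the number of admissible configurations supported on a dissociated set, one reaches a contradiction. The main obstacle is precisely this last step: a bounded-degree graph only guarantees a bounded-size matching, so the argument cannot lean on one large matching inside a single $N(a)$ but must combine the sheer number of bad $a$'s with the distinctness of the $y(d,d')$ and the rigidity of a dissociated set, and the bookkeeping has to be tight enough to recover the explicit constant $\tfrac13$ (and the clean loss factor $\tfrac12$) rather than merely some absolute constant below $1$.
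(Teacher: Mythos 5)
Your overall framework is the right one: you set up the dichotomy on the maximum degree of the graph $H_a$ on vertex set $D$ with edge set $N(a)$, you observe that for the bad $a$'s one can extract disjoint pairs $P,Q\in N(a)$, you subtract the two identities to cancel $a$ and land a relation in $(D-D)+(2S-2S)+(A-A)$, and you plan to combine the dissociatedness of $D$ with Lemma~\ref{bollobas} (the Two Families Theorem). All of this matches the paper's proof. However, you then stop short, and the place where you stop is precisely where the paper's argument does its quantitative work.

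The gap is in the counting. You look for a \emph{matching} inside $N(a)$, correctly note that a graph with maximum degree $<|N(a)|/3$ only guarantees a matching of bounded (indeed $O(1)$) size, and then flag this as the ``main obstacle''. But the paper does not use a matching at all. It counts the set $T$ of \emph{all} triples $(a,P,Q)$ with $a\in\mathcal{N}\setminus\mathcal{N}(1/3)$ and $P,Q\in N(a)$ disjoint. For each bad $a$ and each of the $|N(a)|$ choices of $P$, at most $2|N(a)|/3$ edges of $H_a$ meet $P$ (since each endpoint of $P$ has degree $<|N(a)|/3$), so at least $|N(a)|/3$ choices of $Q$ work. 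Hence each bad $a$ contributes $\geqslant |N(a)|^2/3 \gtrsim (|D|^2/|A|)^2$ triples, not $O(1)$, giving $|T|\gtrsim |D|^6/|A|^3$. Mapping $\sigma(a,P,Q)=\sum_{x\in P}x - \sum_{x\in Q}x$, one shows via dissociatedness and Lemma~\ref{bollobas} that $\sigma$ is at most $C_1$-to-one, and that its image lies in $A-A+2S-2S$ (size $\leqslant |A|^2|S|^4$); comparing yields $|S|\gtrsim (|D|^6/|A|^5)^{1/4}$, contradicting \eqref{Sassumptions}. So the obstacle you identify is real for the matching formulation, but it dissolves once you count \emph{ordered pairs of disjoint edges} instead of a matching; without that replacement your outline does not close.
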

\begin{proof}
We again argue by contradiction, so assume that $\left|\mathcal{N}(1/3)\right|< \frac{\left|\mathcal{N}\right|}{2}$. Then by definition, for every $a\in \mathcal{N}\setminus\mathcal{N}(1/3)$ and every $d\in D$, $d$ lies in less than $\frac{|N(a)|}{3}$ of all pairs in $N(a)$. Then pick any $a\in \mathcal{N}\setminus\mathcal{N}(1/3)$ and any pair $P=\{d,d'\}\in N(a)$. The number of pairs $Q\in N(a)$ which intersect $P$ is at most $\frac{2|N(a)|}{3}$ as there are fewer than $\frac{|N(a)|}{3}$ pairs containing $d$, and similarly for $d'$. For this lemma only, we define $T$ to be the set
$$T\vcentcolon=\left\{(a,P,Q):\text{$a\in\mathcal{N}\setminus\mathcal{N}(1/3)$ and $P,Q\in N(a)$ are disjoint}\right\}.$$ We have just shown that if $a\in \mathcal{N}\setminus\mathcal{N}(1/3)$, then for any $P\in N(a)$ there are at least $\frac{|N(a)|}{3}$ distinct $Q\in N(a)$ which are disjoint from $P$, so we get that 
\begin{align*}
    |T|&\geqslant \left|\mathcal{N}\setminus\mathcal{N}(1/3)\right|\left( \min_{a\in\mathcal{N}}|N(a)|\right)\left( \min_{a\in\mathcal{N}}\frac{|N(a)|}{3}\right)\\
    &\geqslant \frac{|D|^2}{12|A|}\left(\frac{|D|^2}{6|A|}\right)\left(\frac{|D|^2}{18|A|}\right)\\
    &> \frac{|D|^6}{2^{11}|A|^3}
\end{align*} using Lemma \ref{Nlarge} to get $\left|\mathcal{N}\setminus\mathcal{N}(1/3)\right|\geqslant \frac{\left|\mathcal{N}\right|}{2}\geqslant  \frac{|D|^2}{12|A|}$, and that by definition of $\mathcal{N}$, $|N(a)|\geqslant \frac{|D|^2}{6|A|}$ for $a\in\mathcal{N}$. Now we can assign a sum to each element of $T$ as follows. For each element $(a,P,Q)\in T$, define $\sigma(a,P,Q)\vcentcolon= \sum_{x\in P}x-\sum_{x\in Q}x$. The point is that $\sigma:T\to G$ takes each value at most $C_1$ times, where $C_1$ is the absolute constant in Lemma \ref{bollobas}.
Indeed, let us assume for a contradiction that $(a_i,P_i,Q_i)\in T$ for $i=1,2,\dots C_1+1$ all have the same image under $\sigma$. Then as $P_i\cap Q_i=\emptyset$ for all $i$ by definition of $T$, Lemma \ref{bollobas} gives two distinct $i,j$ so that $P_i\cap Q_j=\emptyset=P_j\cap Q_i$ and we also have $\sum_{x\in P_i}x-\sum_{x\in Q_i}x=\sigma(a_i,P_i,Q_i)=\sigma(a_j,P_j,Q_j)=\sum_{x\in P_j}x-\sum_{x\in Q_j}x$. This rearranges to $\sum_{x\in P_i\cup Q_j}x=\sum_{x\in P_j\cup Q_i}x$. But $D$ is a dissociated set and as $P_i\cap Q_j=\emptyset=P_j\cap Q_i$, the sets $P_i\cup Q_j$ and $P_j\cup Q_i$ are subsets (and not multisubsets) of $D$ with equal sum so we conclude that $P_i\cup Q_j=P_j\cup Q_i$. By definition of $T$, $P_i$ and $Q_i$ are disjoint and so are $P_j$ and $Q_j$ so we must have that $P_i=P_j$ and $Q_i=Q_j$. Finally, this implies that $a_i=a_j$ (as $P_i\in N(a_i)$ and $P_i=P_j\in N(a_j)$ but by definition \eqref{N(a)definition} each pair $P$ lies in exactly one $N(a)$). So we have a contradiction as we assumed that $(a_i,P_i,Q_i),(a_j,P_j,Q_j)$ were distinct. Hence, we conclude that $\sigma$ takes each value at most $C_1$ times.
\bigskip

On the other hand, if $(a,P,Q)\in T$ then $P,Q\in N(a)$ which means precisely that after writing $P=\{d_1,d_1'\}$ and $Q=\{d_2,d_2'\}$, we have $x(d_i,d'_i)=a$ so that
\begin{align*}
    a+y(d_i,d_i') = (d_i+s_{d_i})+(d_i'+s_{d_i'}),
\end{align*}
for $i=1,2$. Subtracting this equation with $i=2$ from that with $i=1$ shows that
\begin{align*}
    \sigma(a,P,Q) &= d_1+d_1'-d_2-d_2' \\
    &= y(d_1,d_1')-y(d_2,d_2')-s_{d_1}-s_{d_1'}+s_{d_2}+s_{d_2'}\\
    &\in A-A+2S-2S.
\end{align*}
Hence, $\sigma:T\to A-A+2S-2S$ is a map from a set of size $|T|> \frac{|D|^6}{2^{11}|A|^3}$ to a set of size $|A-A+2S-2S|\leqslant |A|^2|S|^4$ which takes each value at most $C_1$ times. We deduce that $\frac{|D|^6}{2^{11}|A|^3}< C_1|A|^2|S|^4$ and rearranging gives that $$|S|>\left(\frac{|D|^6}{2^{11}C_1|A|^5}\right)^{\frac{1}{4}},$$ which is the required contradiction as we assumed \eqref{Sassumptions} and we can take $C>2^{11}C_1$.
\end{proof}
Let us see now what it means that for many $a\in \mathcal{N}$, namely for all $a\in \mathcal{N}(1/3)$, lots of unordered pairs in $N(a)$ contain a common element. So pick $a\in \mathcal{N}(1/3)$, then we can find an element $d(a)\in D$ and distinct pairs $P_1,P_2,\dots,P_m\in N(a)$ with $m\geqslant \frac{|N(a)|}{3}\geqslant \frac{|D|^2}{18|A|}$ (recall that $|N(a)|\geqslant \frac{|D|^2}{6|A|}$ by definition \eqref{mathcalNdefinition} of $\mathcal{N}$) so that each $P_i$ contains $d(a)$. Hence, we can write $P_i=\{d(a),d_i(a)\}$. By definition of $N(a)$, we therefore get the following list of equations
\begin{align}
    a+y(d(a),d_1(a))&= (d(a)+s_{d(a)})+(d_1(a)+s_{d_1(a)})
    \label{translateeq}\\
     a+y(d(a),d_2(a))&= (d(a)+s_{d(a)})+(d_2(a)+s_{d_2(a)})\nonumber\\
     &\,\,\vdots\nonumber\\
    a+y(d(a),d_m(a))&= (d(a)+s_{d(a)})+(d_m(a)+s_{d_m(a)}),\nonumber
\end{align}
with $m\geqslant \frac{|D|^2}{18|A|}$. So for any $a\in \mathcal{N}(1/3)$, we get many equations like this which have a common term on the left hand side, and a common term on the right hand side. We are now almost ready to find a larger set of translates $S'$ so that $\left|(D+S')\cap A\right|\geqslant \left|(D+S)\cap A\right|+\frac{|D|^2}{36|A|}$ and hence finish the proof. There are two cases that we need to consider based on whether many of the elements $y(d(a),d_i(a))$ are in $A\setminus{(D+S)}$ or in $D+S$.
\bigskip

The first case is straightforward now that we have \eqref{translateeq}. Indeed, suppose that for a single $a\in \mathcal{N}(1/3)$, at least half of the elements $y(d(a),d_i(a))$ appearing in \eqref{translateeq} lie in $A\setminus{(D+S)}$. Without loss of generality, we may assume that $y(d(a),d_i(a))\in A\setminus{(D+S)}$ for $i=1,2,\dots,\frac{m}{2}$ with $m\geqslant \frac{|D|^2}{18|A|}$. Then if we set $t=d(a)+s_{d(a)}-a$, the equations \eqref{translateeq} give that
\begin{align*}
    y(d(a),d_i(a)) &= (d(a)+s_{d(a)}-a)+ (d_i(a)+s_{d_i(a)})\\
    &= t+(d_i(a)+s_{d_i(a)})\in D+(S+t)
\end{align*}
for $i=1,2,\dots,\frac{m}{2}$. Then we can take $S'= S\cup(S+t)$ so that $|S'|\leqslant 2|S|$ and $$\left|(D+S')\cap A\right|\geqslant \left|(D+S)\cap A\right|+\frac{m}{2}\geqslant \left|(D+S)\cap A\right|+\frac{|D|^2}{36|A|}$$
since $y(d(a),d_i(a))\in (A\cap(D+S'))\setminus(D+S)$ for $i=1,2,\dots,\frac{m}{2}$. This is the desired conclusion.
\bigskip

In the final case, we may assume that for every $a\in \mathcal{N}(1/3)$, at least half of the elements $y(d(a),d_i(a))$ appearing in \eqref{translateeq} lie in $D+S$. Without loss of generality, we may assume that $y(d(a),d_i(a))\in (D+S)$ for $i=1,2,\dots,\frac{m}{2}$ with $m\geqslant \frac{|D|^2}{18|A|}$. Hence, we can find, for each $a\in \mathcal{N}(1/3)$ and each $1\leqslant i\leqslant\frac{m}{2}$, the elements $e_i(a)\in D$ and $s_i(a)\in S$ so that
\begin{align}
    y(d(a),d_i(a)) = e_i(a)+s_i(a).
\label{y=e+s}
\end{align}
Recall also equation \eqref{translateeq} which says that, for each such $a\in \mathcal{N}(1/3)$ and each $i=1,2,\dots,\frac{m}{2}$, we have
\begin{align}
    a+y(d(a),d_i(a)) = (d(a)+s_{d(a)})+(d_i(a)+s_{d_i(a)}).
\label{translateeq2}
\end{align}
We need one more lemma showing that, under the assumptions of this final case, for every $a\in \mathcal{N}(1/3)$, the element $e_i(a)$ must coincide with $d_i(a)$ for some $i$.
\begin{lemma}
    Assume that for every $a\in \mathcal{N}(1/3)$, we have that $y(d(a),d_i(a))\in (D+S)$ for $i=1,2,\dots,\frac{m}{2}$ and that $m\geqslant \frac{|D|^2}{18|A|}$. Then for every $a\in \mathcal{N}(1/3)$, there exists an $i\in\{1,2,\dots,\frac{m}{2}\}$ so that $e_i(a)=d_i(a)$.
\label{lemmae=d}
\end{lemma}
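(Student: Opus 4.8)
The plan is to argue by contradiction and reduce the statement, for each fixed $a$, to an instance of the Two Families Theorem on the dissociated set $D$. So fix $a \in \mathcal{N}(1/3)$, write $d = d(a)$ for brevity, and suppose towards a contradiction that $e_i(a) \neq d_i(a)$ for every $i \in \{1,\dots,\frac{m}{2}\}$. Combining \eqref{translateeq2} with \eqref{y=e+s} gives, for each such $i$,
\[
(d+s_d)+(d_i(a)+s_{d_i(a)}) = a + e_i(a) + s_i(a),
\]
and rearranging to separate the ``$D$-part'' from the ``$S$-part'' yields
\[
d_i(a) - e_i(a) = \big(a - d - s_d\big) + \big(s_i(a) - s_{d_i(a)}\big).
\]
The first bracket $\rho := a - d - s_d$ is a single fixed element of $G$ depending only on $a$, while the second bracket lies in $S-S$. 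Hence all the differences $d_i(a) - e_i(a)$ lie in the translate $\rho + (S-S)$, so they assume at most $|S-S| \leqslant |S|^2$ distinct values, each of which is nonzero by the contradiction hypothesis.

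Next I would partition $\{1,\dots,\frac{m}{2}\}$ into at most $|S|^2$ classes according to the value $\delta = d_i(a)-e_i(a)$ and bound each class via Lemma \ref{bollobas}. On a fixed class one has $d_i(a)-e_i(a) = \delta = d_j(a)-e_j(a)$ for any two indices $i,j$, hence
\[
d_i(a) + e_j(a) = d_j(a) + e_i(a).
\]
Applying the Two Families Theorem with the singleton sets $P_i = \{d_i(a)\}$ and $Q_i = \{e_i(a)\}$, the hypothesis $P_i \cap Q_i = \emptyset$ is exactly $d_i(a) \neq e_i(a)$, which we are assuming, and for $i \neq j$ the crossing condition $P_i \cap Q_j \neq \emptyset$ or $P_j \cap Q_i \neq \emptyset$ must hold: otherwise $\{d_i(a),e_j(a)\}$ and $\{d_j(a),e_i(a)\}$ are genuine $2$-element subsets of the dissociated set $D$ with equal sum, hence equal, which forces either $d_i(a)=d_j(a)$ (impossible, since the pairs $P_i = \{d,d_i(a)\}$ are distinct, so the $d_i(a)$ are distinct) or $d_i(a)=e_i(a)$ (impossible by hypothesis). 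Thus Lemma \ref{bollobas} bounds each class by $C_1$, and summing over the at most $|S|^2$ classes gives $\frac{m}{2} \leqslant C_1|S|^2$.

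It remains to extract the contradiction. Since $m \geqslant \frac{|D|^2}{18|A|}$ we obtain $\frac{|D|^2}{36|A|} \leqslant C_1 |S|^2$; inserting the bound $|S|^2 \leqslant \big(\frac{|D|^6}{C|A|^5}\big)^{1/2}$ from \eqref{Sassumptions} and using $10 \leqslant |D| \leqslant |A|$ rearranges to $C^{1/2} \leqslant 36C_1/|A|^{1/2} \leqslant 36C_1/\sqrt{10}$, i.e. $C \leqslant 1296\,C_1^2/10$, which is false once $C$ is taken large enough (say $C \geqslant 130\,C_1^2$, consistent with the earlier lower bounds imposed on $C$ in the proof of Proposition \ref{iteratetran}). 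I expect the crux to be the opening move: after eliminating $a$ from \eqref{translateeq2}, one has to recognise that the $D$-valued difference $d_i(a)-e_i(a)$ is trapped inside a translate of the small set $S-S$, so that fibring over this value turns the problem into the same flavour of Two Families / dissociativity argument already used for the sets $B^{(1)}(D)$ and $B^{(2)}(D)$; once that relation is in hand, the rest is routine counting with constants already in circulation.
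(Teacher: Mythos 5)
Your proof is correct and takes essentially the same approach as the paper: you note that the differences $d_i(a)-e_i(a)$ are trapped in the translate $\rho+(S-S)$ of the small set $S-S$, and then use the dissociativity of $D$ via Lemma \ref{bollobas} to bound the number of indices sharing a common difference. The only cosmetic difference is that you partition into classes and sum, whereas the paper runs the equivalent pigeonhole argument (the image of $i\mapsto d_i(a)-e_i(a)$ has size $<m/(2C_1)$, so some value is attained $C_1+1$ times), and you derive the final numerical contradiction slightly differently; both are the same argument.
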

Assuming this lemma for the moment, we can finish the proof. Rewriting $y(d(a),d_i(a))$ using \eqref{y=e+s} in the equation \eqref{translateeq2} gives
\begin{align}
    a+e_i(a)+s_i(a) = (d(a)+s_{d(a)})+(d_i(a)+s_{d_i(a)})
\label{finish1}
\end{align}
for all $a\in\mathcal{N}(1/3)$ and $i=1,2,\dots,\frac{m}{2}$. By Lemma \ref{lemmae=d}, for each $a\in \mathcal{N}(1/3)$, we can find some $i\leqslant \frac{m}{2}$ so that $e_i(a)=d_i(a)$. Plugging this into \eqref{finish1} and cancelling $d_i(a)=e_i(a)$ on both sides gives
\begin{align*}
    a+s_i(a)= d(a)+s_{d(a)}+s_{d_i(a)}
\end{align*}
so that $a= d(a)+s_{d(a)}+s_{d_i(a)}-s_i(a)\in D+2S-S$ for all $a\in \mathcal{N}(1/3)$. Hence, taking our new set of translates to be $S'=(2S-S)\cup S= 2S-S$, we get that
$$\left|(D+S')\cap A\right|\geqslant \left|(D+S)\cap A\right|+\left|\mathcal{N}(1/3)\right|\geqslant \left|(D+S)\cap A\right|+\frac{|D|^2}{12|A|}$$
as $\left|\mathcal{N}(1/3)\right|\geqslant \frac{\left|\mathcal{N}\right|}{2}\geqslant \frac{|D|^2}{12|A|}$ by Lemmas \ref{Nlarge} and \ref{N(1/3)large} and as $\mathcal{N}\subset A$ is disjoint from $D+S$ by definition \eqref{mathcalNdefinition}. This is the desired conclusion. So we only need to prove Lemma \ref{lemmae=d}.
\begin{proof}[Proof of Lemma \ref{lemmae=d}]
Suppose for a contradiction that the lemma is false. Then there exists some $a\in \mathcal{N}(1/3)$ so that 
\begin{equation}
    e_i(a)\neq d_i(a)
\label{dnoteeq}
\end{equation} for all $i=1,2,\dots,\frac{m}{2}$. Rewriting $y(d(a),d_i(a))$ using \eqref{y=e+s} in the equation \eqref{translateeq2} gives
\begin{align}
    a+e_i(a)+s_i(a) = (d(a)+s_{d(a)})+(d_i(a)+s_{d_i(a)})
\label{finish}
\end{align}
for this supposed counterexample $a\in\mathcal{N}(1/3)$, and every $i=1,2,\dots,\frac{m}{2}$. Hence, if we write $t'=a-(d(a)+s_{d(a)})$, then for each such $i$ we have that
\begin{align*}
    d_i(a)-e_i(a)&= a-(d(a)+s_{d(a)})+s_i(a) -s_{d_i(a)}\\
    &= t'+s_i(a)-s_{d_i(a)}\in t'+S-S.
\end{align*}
However, the set $t'+S-S$ has at most $|S|^2\leqslant|S|^4\leqslant \frac{|D|^6}{C|A|^5} <\frac{|D|^2}{36C_1|A|}\leqslant\frac{m}{2C_1}$ many elements by assumption \eqref{Sassumptions}, as $|D|\leqslant |A|$ since $D\subset A$, and by choosing $C$ sufficiently large in terms of the absolute constant $C_1$. By the pigeonhole principle, out of all $\frac{m}{2}$ possible indices $i$ there exist $C_1+1$ distinct such indices, say $i=1,\dots,C_1+1$, so that 
\begin{equation}
    d_i(a)-e_i(a)=d_{j}(a)-e_j(a)
\label{diffsameoften}
\end{equation} for all $1\leqslant i,j\leqslant C_1+1$. Since $e_i(a)\neq d_i(a)$ by \eqref{dnoteeq}, the sets $P_i=\{e_i(a)\},Q_i=\{d_i(a)\}$ satisfy $P_i\cap Q_i=\emptyset$ so we can apply Lemma \ref{bollobas} to deduce that, without loss of generality, $P_1\cap Q_2=\emptyset=P_2\cap Q_1$. Rearranging \eqref{diffsameoften} and the fact that $D$ is dissociated then yield $\{d_i(a),e_j(a)\}=\{d_j(a),e_i(a)\}$ so by \eqref{dnoteeq} we conclude that $d_i(a)=d_j(a)$. This is the required contradiction (as for a fixed $a$, the elements $d_1(a),\dots,d_m(a)$ that we defined for the equations \eqref{translateeq} are all distinct). This finishes the proof of the lemma.
\end{proof}
This concludes the proof of Proposition \ref{iteratetran}.
\end{proof}
\bibliographystyle{plain}
\bibliography{referencesUniqueSums}
\bigskip

\noindent
{\sc Mathematical Institute, Andrew Wiles Building, University of Oxford, Radcliffe
Observatory Quarter, Woodstock Road, Oxford, OX2 6GG, UK.}\newline
\href{mailto:benjamin.bedert@magd.ox.ac.uk}{\small benjamin.bedert@magd.ox.ac.uk}
\end{document}